\documentclass[11pt]{article}
\usepackage{graphicx,wrapfig,lipsum}
\usepackage{amsmath,amssymb,graphicx} 
\usepackage{tcolorbox}
\usepackage{amsfonts}
\usepackage{pifont}
\usepackage{dsfont}
\usepackage{mathtools}
\usepackage{extarrows}
\usepackage{float}
\usepackage{amsthm}
\usepackage{enumerate}
\usepackage[toc]{appendix}
\usepackage{hyperref}
\usepackage{fullpage}
\usepackage{tikz}
\usepackage{tcolorbox}
\usepackage{bbm}
\usepackage{comment}
\newtheorem{definition}{Definition}[section]
\newtheorem{theorem}{Theorem}

\newtheorem{lemma}{Lemma}
\newtheorem{remark}{Remark}
\usepackage{comment}
\usepackage{enumitem}
\usepackage{stackengine}
\usepackage[utf8]{inputenc}
\usepackage{xcolor}
\usepackage{graphicx}
\usepackage[normalem]{ulem}
\graphicspath{ {./images/} }
\allowdisplaybreaks

\allowdisplaybreaks
\def\es{\varepsilon}
\def\orr{\Omega_R^\varepsilon}
\def\omm{\Omega_M^\varepsilon}
\def\oll{\Omega_L^\varepsilon}
\def\oii{\Omega_i^\varepsilon}

\def\mor{\Omega_R}
\def\mol{\Omega_L}
\def\p{\partial}

\def\mint{\int_{0}^{T}\int_{\Sigma}\int_{\mathcal{T}}\int_{Y}\int_{Z}}
\def\Mint{\int_0^T\int_{\omm}}
\def\dzdy{\di{z}\di{y}\di{\tau}\di{\overline{x}}\di{t}}
\def\T{\mathcal{T}}
\usepackage{color}
\usepackage{cleveref}
\usetikzlibrary{arrows,decorations.pathmorphing}
\newcommand{\di}[1]{\,\mathrm{d}#1}

\begin{document}
	
	
	\title{Interface Conditions for Wave Propagation Through a Time-varying Metasurface}

\author{Vishnu Raveendran$^a$\footnote{raveendr@ins.uni-bonn.de}\,  and Barbara Verfürth$^a$ \\ 
$^a$Institute for Numerical Simulation, University of Bonn, Germany.}

	\date{} 
	\maketitle
	
	\begin{abstract}\label{abstract}
    
	 We study wave propagation through a time-modulated thin heterogeneous layer.  The layer is assumed to have a thickness  of order $\es\ll 1$ and its material properties exhibit rapid oscillations on multiple spatial and temporal scales.
We aim to rigorously derive the effective model and the corresponding interface conditions for wave propagation through the limiting interface. 
 
To perform the homogenization together with dimension reduction, we generalize the notion of two-scale convergence for thin layer introduced by Neuss-Radu and Jäger (2007) to a multiple space-time scale framework. One of the main analytical difficulties arising in the homogenization analysis is that, in general, a uniform energy estimate cannot be obtained for a wave equation with time-varying coefficients. Therefore, we identify two physically relevant classes of coefficients for which we can derive a uniform energy bound. These include coefficients with traveling wave-type modulations of their properties.

Using the energy estimates and the multiscale convergence for thin layer concept, we derive the effective model, which  consists of linear wave equations in the bulk domains coupled through a nonstandard jump condition at the interface. This jump condition is governed by a wave-type dynamical equation with effective macroscopic coefficients determined by suitable cell problems of elliptic and hyperbolic type.  Finally, we discuss the uniqueness of the effective model. 
     
	\end{abstract}
		{\bf Keywords}: Homogenization; metamaterial; time modulated wave equation; dimension reduction; thin layer; multiscale convergence; reiterated homogenization.
		\\
		{\bf MSC2020}: 35B27; 78A40; 35L05; 78-10.    
\maketitle

\section{Introduction}\label{section:Introduction}
A metasurface is an artificially engineered ultra-thin surface made up of periodically arranged substructures whose size is much smaller than the wavelength to manipulate wave propagation. 
Depending on the physical situation, a metasurface  can be used to control transmission, reflection, phase separation, or the direction of wave propagation as they arise from electromagnetic, acoustic, or elastic settings \cite{Davies2025Roadmap}. In recent years, increasing attention is given to adding time modulation to such metasurfaces (or metamaterials) to generate dynamical effects that are generally not attainable by time-independent metasurfaces \cite{HuidobroSilveirinhaGaliffiPendry2021,Shaltout2019Spatiotemporal,DarcheAssierGuenneauLombardTouboul2025,Garg2024TwoStep,AmmariCaoHiltunenRueff2026}. Mathematically, this leads to a wave propagation model with both space-time oscillating coefficients.

In this paper, we are interested in rigorously finding the effective model and the corresponding effective interface conditions of wave propagation through a time-varying heterogeneous layer. We consider two bulk domains separated by a thin heterogeneous layer of thickness $\es$, and assume a wave is propagating from one bulk domain to another through this layer.  In these domains, we consider the wave equation of the type
\begin{align*}
    \frac{\p }{\p t}\left(\rho \frac{\p u}{\p t}\right)-\mathrm{div} \left(D\nabla u\right)=f
\end{align*}
with suitable scalings, initial and boundary conditions. Here the coefficients $\rho$ and $D$ determine the material properties. 
We assume the material properties in the layer can rapidly oscillate with respect to space as well as time such that $\rho$ and $D$ are highly oscillating space-time variables in the layer. We aim to determine the effective behavior of the heterogeneous layer in the limit $\es\rightarrow 0$ and to derive the resulting interface conditions that describe the influence of heterogeneity and temporal modulation of the layer for wave propagation.

Since the layer has a thickness of order $\es$, to let $\es\rightarrow 0$ in the microscopic model and derive the upscaled model, we require a homogenization framework that accounts for homogenization with multiple scales in space and time as well as dimension reduction. In the same spirit as the classical two-scale convergence \cite{Allaire1992} generalized to multi-scale convergence \cite{allaire1996multiscale}, we generalize the concept of two-scale convergence for thin layer \cite{neuss2007effective} to a setting involving multiple spatial and temporal scales. This leads us to the notion of multiscale convergence for thin layer.

To use the multiscale convergence for the thin layer, we require uniform energy estimates for our microscopic model. In contrast to a static medium, a time-varying medium may transfer energy to the propagating wave, leading to a blow-up of the wave energy. Consequently, in general, we cannot obtain a uniform energy bound.  Therefore, we characterize two physically relevant structures of the coefficient for which we get a uniform energy bound. Then, using the energy estimates and  multiscale convergence for the thin layer, we derive the effective model and the associated interface conditions along the limiting interface. We observe that the first interface condition gives the continuity of the solution along the effective interface. But in the second interface condition, which describes the jump of the normal flux across the interface, we obtain a dynamical wave-type equation whose coefficients are determined by a suitable hyperbolic and elliptic cell problem.  In particular, the hyperbolic structure of the cell problem arises due to the time modulation of the microscopic problem.

Homogenization problems related to wave propagation in spatially heterogeneous media are widely studied \cite{Santosa1991Dispersive,BrahimOtsmane1992Correctors,doding2026optimizationbased,AllaireConca1998}. However, finding the effective model for wave propagation in time-modulated media is significantly less explored, even for a regular domain. One of the main reasons is that the time modulation can cause instability in the solution \cite{colombini1984hyperbolic}. Formal homogenization of such problems is addressed in \cite{DV26,TouboulLombardAssierGuenneauCraster2024}, while a rigorous derivation is studied in \cite{casado2011homogenization}, in which the author considers strong spatial oscillation and weak temporal oscillation.

To the best of our knowledge, no previous work has addressed the derivation of interface conditions via homogenization and dimension reduction for a time-modulated thin layer. Nevertheless, homogenization and dimension reduction techniques have been successfully applied to find effective models for a variety of physical problems \cite{BuffordDavoliFonseca2015,Freudenberg2026Analysis,FatimaIjiomaOgawaMuntean2014,Raveendran2022Scaling,MarusicPalokaPazanin2026}. In particular, spatially oscillating wave equations are considered in \cite{Gahn2026Effective,Gahn2026TwoScale} and related problems for Maxwell's equations have been analyzed in \cite{SchweizerWiedemann2026,AiyappanGrisoOrlikSufian2026}.

We organize the paper as follows: In \Cref{sect:microscopic_model}, we first introduce the microscopic model along with the microscopic geometry. Then we present the suitable assumptions on the data and the weak formulation of the problem together with the well-posedness result. In \Cref{section:Uniform energy estimates and compactness results}, we discuss under what additional conditions on the data the uniform energy estimates are guaranteed and provide a discussion of why we cannot expect a uniform energy bound in a general temporal oscillation setting. Later, we introduce the multiscale convergence for thin layer and derive the related compactness result. We also establish the compactness result on the bulk as well as on the interfaces connecting the bulk and the layer. Finally, in \Cref{section:Upscaling of the microscopic model}, we derive the effective model and interface condition, in addition, we study the uniqueness of the effective model.

\section{Microscopic model}\label{sect:microscopic_model}

Let $\ell,h, T, \es>0$, and $\Omega\subset\mathbb{R}^2$ be a rectangular domain defined as $(-\ell,\ell)\times (0,h)$ consisting of three subdomains and their interfaces (see Fig. \ref{micmod}). We denote these subdomains by $\Omega_L^\es, \Omega_M^\es$ and $\Omega_R^\es$ in which $\Omega_M^\es$ contains a heterogeneous structure. We denote the interface between $\Omega_L^\es$ and $\Omega_M^\es$ as $L^\es$, and  the interface between $\Omega_M^\es$ and $\Omega_R^\es$ as $R^\es$.
Precisely, the subdomains and interfaces are given by the following definition
\begin{align*}
    \oll := \left(-\ell,\dfrac{-\es}{2}\right)\times (0,h),\qquad
    \orr := \left(\dfrac{\es}{2},\ell\right)\times (0,h),\qquad
    \omm :=\left(\dfrac{-\es}{2},\dfrac{\es}{2}\right)\times (0,h)
    \end{align*}
    \begin{align*}
    L^\es:= \left\{\left(\dfrac{-\es}{2},x_2\right): x_2\in (0,h)\right\},\qquad
    R^\es:= \left\{\left(\dfrac{\es}{2},x_2\right): x_2\in (0,h)\right\}.
\end{align*}
Therefore, we have $\Omega=\oll\cup L^\es\cup\omm\cup R^\es\cup\orr$.  For simplicity, we assume $h=N\es$ for some $N\in \mathbb{N}$.

Now to describe the heterogeneous structure of $\omm$, we first define the standard cell $Y$ as a unit square $(\tfrac{-1}{2},\tfrac{1}{2})\times (0,1)$  made up of materials that may change their properties with respect to time, for example we can consider the unit square $(\tfrac{-1}{2},\tfrac{1}{2})\times (0,1)$ with a time-modulated material located at the center (see Fig. \ref{scell}). We denote the left vertical boundary of $Y$ as $Y_L$ and the right vertical boundary as $Y_R$.
\begin{figure}[ht]
	\begin{center}
		\begin{tikzpicture}
		\draw (0,0) node [anchor=north] {{\scriptsize $(\frac{-1}{2},0)$}} to (3,0) node [anchor=north] {{\scriptsize $(\frac1 2,0)$}}  to (3,1.5)node [anchor=south] {{\scriptsize $(\frac 12,1)$}} to (0,1.5)node [anchor=south] {{\scriptsize $(\frac{-1}{2},1)$}} to (0,0);
		\draw [fill] (1,.5) to (2,.5) to (2,1) to (1,1) to (1,.5);
		\draw[<-](0,0.5) to (0.4,1) node[anchor=west] {{\scriptsize $Y_{L}$}};
		\draw[<-](3,0.5) to (3.4,1) node[anchor=west] {{\scriptsize $Y_{R}$}};
		\end{tikzpicture}
		\caption{Schematic representation of a standard cell $Y$ exhibiting a time-varying rectangular material placed in the center.}
		\label{scell}
	\end{center}
\end{figure}

The heterogeneous layer $\omm$ can be described by periodically arranged in one direction $\varepsilon$-scaled versions of $Y$, that is
\begin{align*}
    \omm= \bigcup_{k=0}^{N-1} \es\left((0,k)+Y\right).
\end{align*}

To describe the macroscopic geometry that will be utilized after homogenization, we have the following definition of macroscopic subdomains
 \begin{align*}\Omega_L := \left(-\ell,0\right)\times (0,h),\qquad
    \Omega_R :=\left(0,\ell\right)\times (0,h),
    \end{align*}
and the effective interface
\begin{align*}
    \Sigma :=\{(0,x):x\in (0,h)\},
\end{align*}
as illustrated in Figure \ref{macmod}.

To reduce notational complexity, for any function $g:\Omega\rightarrow \mathbb{R}$,  we write $g_L^\es$ as restriction of $g$ on $\oll$, $g_M^\es$ as restriction of $g$ on $\omm$ and $g_R^\es$ as restriction of $g$ on $\orr$.

\begin{figure}[H] 
		\begin{center}
			\begin{tikzpicture}[scale=1.15,
  >=stealth',
  pos=.8,
  photon/.style={decorate,decoration={snake,post length=1mm}}
]\draw (0,0) node [anchor=north] {} to (6,0) node [anchor=north] {}  to (6,3.75) to (0,3.75) to (0,0);
			\draw (2.5,0)node [anchor=north] {}   to (2.5,3.75)  ;
			\draw (2.5,0)node [anchor=north] {{\scriptsize $-\frac{\es}{2}$}}   to (2.5,3.75)  ;
			\draw (3.5,0)node [anchor=north] {{\scriptsize $+\frac{\es}{2}$}}to (3.5,3.75); 
			\draw (2.5,.75) to (3.5,.75);
			\draw(2.5,1.5) to (3.5,1.5);
			\draw(2.5,2.25) to (3.5,2.25);
			\draw(2.5,3) to (3.5,3);
			\draw [fill](2.75,.25) to (3.25,.25) to(3.25,.50) to (2.75,.50) to (2.75,.25);
			\draw [fill](2.75,1) to (3.25,1) to(3.25,1.25) to (2.75,1.25) to (2.75,1);
			\draw [fill](2.75,1.75) to (3.25,1.75) to(3.25,2) to (2.75,2) to (2.75,1.75);
			\draw [fill](2.75,2.5) to (3.25,2.5) to(3.25,2.75) to (2.75,2.75) to (2.75,2.5);
			\draw [fill](2.75,3.25) to (3.25,3.25) to(3.25,3.5) to (2.75,3.5) to (2.75,3.25);
			\draw [->,photon](0,3.5)  to (2.5,3.5) ;
			\draw [->,photon](0,3.0)  to (2.5,3.0) ;
			\draw [->,photon](0,2.5)  to (2.5,2.5) ;
			\draw [->,photon](0,2)  to (2.5,2) ;
			\draw [->,photon](0,1.5)  to (2.5,1.5) ;
			\draw [->,photon](0,1)  to (2.5,1) ;
			\draw [->,photon](0,0.5)  to (2.5,0.5) ;
			\draw [->,photon](3.5,3.25)  to (6,3.25) ;
			\draw [->,photon](3.5,2.5)  to (6,2.5) ;
			\draw [->,photon](3.5,1.75)  to (6,1.75) ;
			\draw [->,photon](3.5,1)  to (6,1) ;
			\draw [->,photon](3.5,0.25)  to (6,0.25) ;
			\filldraw[black] (2,4) circle (0pt) node[anchor=east]{\Large $\Omega_L^\es$};
			\filldraw[black] (3.5,4) circle (0pt) node[anchor=east]{\Large $\Omega_M^\es$};
			\filldraw[black] (5,4) circle (0pt) node[anchor=east]{\Large $\Omega_R^\es$};
 			\draw [<->] (2.4,.75) to (2.4,1.5);
			\draw (2.4,1.2) node[anchor=east] {{\scriptsize $\varepsilon$}};
			\end{tikzpicture}
			\caption{Schematic representation of a microscopic model. }
			\label{micmod}
		\end{center}
	\end{figure}
\begin{figure}[H]
		\begin{center}
			\begin{tikzpicture}[scale=1.15,
  >=stealth',
  pos=.8,
  photon/.style={decorate,decoration={snake,post length=1mm}}
]\draw (0,0) node [anchor=north] {} to (6,0) node [anchor=north] {}  to (6,3.75) to (0,3.75) to (0,0);
		
			\draw [gray,thick](3,0) to (3,3.75)  ;
			\draw [->,photon](0,3.5)  to (3,3.5) ;
			\draw [->,photon](0,3.0)  to (3,3.0) ;
			\draw [->,photon](0,2.5)  to (3,2.5) ;
			\draw [->,photon](0,2)  to (3,2) ;
			\draw [->,photon](0,1.5)  to (3,1.5) ;
			\draw [->,photon](0,1)  to (3,1) ;
			\draw [->,photon](0,0.5)  to (3,0.5) ;
			\draw [->,photon](3.,3.25)  to (6,3.25) ;
			\draw [->,photon](3.,2.5)  to (6,2.5) ;
			\draw [->,photon](3.,1.75)  to (6,1.75) ;
			\draw [->,photon](3.,1)  to (6,1) ;
			\draw [->,photon](3.,0.25)  to (6,0.25) ;
			\filldraw[black] (2,4) circle (0pt) node[anchor=east]{\Large $\Omega_L$};
			\filldraw[black] (3.5,4) circle (0pt) node[anchor=east]{\Large $\Sigma$};
			\filldraw[black] (5,4) circle (0pt) node[anchor=east]{\Large $\Omega_R$};
			\end{tikzpicture}
			\caption{Schematic representation of the macroscopic model. }
			\label{macmod}
		\end{center}
	\end{figure}

In the domain $\Omega$, we consider the following system of wave equations for the unknown triplet $(u_L^\es,u_M^\es,u_R^\es):(0,T)\times\Omega\rightarrow \mathbb{R}$,
\begin{subequations}
\begin{align}
 \dfrac{\partial}{\partial t}\left( {\rho_L}\dfrac{\partial u_L^\es}{\partial t}\right)-\mathrm{div}(D_L \nabla u_L^\es)&=f_L\qquad &\mbox{in}\qquad (0,T)\times \oll\label{eq:microscopicul},\\
    \dfrac{1}{\es} \dfrac{\partial}{\partial t}\left({\rho_M^\es} \dfrac{\partial u_M^\es}{\partial t}\right)-\dfrac{1}{\es}\mathrm{div}\left(D_M^\es \nabla u_M^\es\right)&=0\qquad &\mbox{in}\qquad (0,T)\times\omm\label{eq:microscopicum},\\
  \dfrac{\partial}{\partial t}\left({\rho_R} \dfrac{\partial u_R^\es}{\partial t}\right)-\mathrm{div}(D_R \nabla u_R^\es)&=0\qquad &\mbox{in}\qquad (0,T)\times\orr\label{eq:microscopicur}.
  \end{align}

We endow the system \eqref{eq:microscopicul}--\eqref{eq:microscopicur} with the perfect transmission conditions
\begin{align}
    u_L^\es&=u_M^\es \qquad \mbox{on}\qquad  (0,T)\times L^\es,\\
    u_R^\es&=u_M^\es \qquad \mbox{on}\qquad  (0,T)\times R^\es\label{eq:jumpconditionur},\\
    D_L\nabla u_L^\es\cdot \nu&=\dfrac{1}{\es}D_M^\es \nabla u_M^\es\cdot \nu\qquad \mbox{on} \qquad  (0,T)\times L^\es,\\
     D_R\nabla u_R^\es\cdot \nu&=\dfrac{1}{\es}D_M^\es \nabla u_M^\es\cdot \nu \qquad \mbox{on}\qquad (0,T)\times R^\es,\label{eq:jumpconditionforflux}
\end{align}
\end{subequations}
where $\nu$ is the  unit normal vector pointing from the left bulk region to the right.
On the outer boundary $\partial\Omega$, we assume periodic boundary conditions (in other words, we can identify $\Omega$ with a two-dimensional torus $\mathbb{T}$).
For the initial condition we consider $u_L^\es(0)=v_1,u_M^\es(0)=u_R^\es(0)=0$ and $\dfrac{\partial u_L^\es}{\partial t}(0)=v_2,\dfrac{\partial u_M^\es}{\partial t}(0)=\dfrac{\partial u_R^\es}{\partial t}(0)=0$ in their respective domains.  Note that the source term as well as initial conditions have support in $\oll$, this is to ensure that the wave is excited from the left bulk domain and transmitted through the heterogeneous layer. An analogous configuration could be considered by placing the source term and the initial data in the right bulk domain.

Below, we list some of the notation and functional spaces that we use throughout the paper. Additionally, standard notation on Lebesgue, Sobolve and Bochner spaces is employed throughout.

\subsection{Notations and functional spaces}
For simplicity, we set $\T,Z:=(0,1)$, further more we have the following definitions
\begin{itemize}
    \item We define $\bar x:=(0,x_2)$ and $\nabla_{\bar x}:=\left(0,\frac{\p}{\p x_2}\right)^{\operatorname{T}}.$
    \item  We denote $\Gamma_L$ and $\Gamma_R$ as left and right vertical boundaries of $\Omega.$
    \item  $\hat{\rho}^\es:=\begin{cases}
        \rho_L \quad\mbox{in}\quad [0,T]\times\oll\\
       \frac{1}{\es} \rho_M^\es \quad\mbox{in}\quad [0,T]\times\omm\\
        \rho_R \quad\mbox{in}\quad [0,T]\times\orr
    \end{cases}\qquad
  \hat{D}^\es:=\begin{cases}
        D_L \quad\mbox{in}\quad [0,T]\times\oll\\
        \frac{1}{\es} D_M^\es \quad\mbox{in}\quad [0,T]\times\omm\\
        D_R \quad\mbox{in}\quad [0,T]\times\orr
    \end{cases}$
    \item $\{e_1,e_2\}$ are the standard basis for $\mathbb{R}^2$.
    \item $H_{\sharp,0}^1(Z):=\{v\in {H}_{\sharp}^1(Z):\int_Zv \di{z}=0\}.$
    \item  $\mathcal{H}_{\sharp}(\T,Y):=\{v\in H^1(\T\times Y):v \mbox{ is periodic in } \tau \mbox{ and } y_2\}. $
    \item  $\mathcal{H}_{\sharp,0}(\T,Y):=\{v\in \mathcal{H}_{\sharp}(\T,Y):\int_\T\int_Yv \di{y}\di{\tau}=0\}. $
    \item  $\mathcal{C}_{\sharp}(\overline{\T\times Y\times Z}):=\{v(\tau,y,z)\in C(\overline{\T\times Y\times Z}):v \mbox{ is periodic in } \tau,y_2 \mbox{ and } z\}.$
    \item $\mathcal{H}_{\sharp}(\Omega_i):\{v\in H^1(\Omega_i): v \mbox{ is periodic in } x_2 \mbox{ direction}\},$ where $i\in\{L,M,R\}$.
    \item $H^n_{\text{pw}}(\Omega):=\{v\in L^2(\Omega):v|_{\Omega_i^\es}\in H^n(\Omega_i^\es) \text{  for  } i=L,M,R\}$


\end{itemize}
Now, we list the assumptions on the data that we require to establish the result in the paper.
\subsection{Assumptions on the data}
\begin{itemize}
    \item [(A1)] The coefficients $D_L,D_R,\rho_L,\rho_R>0$ are constants and we consider the time- and space-dependent coefficients $\rho_M^\es(t,x):=\rho_M\left(t,x_2,\frac{t}{\es},\frac{x}{\es},\frac{x_2}{\es^2}\right)$, 
$D_M^\es(t,x):= D_M\left(t,x_2,\frac{t}{\es},\frac{x}{\es},\frac{x_2}{\es^2}\right)$, where $ \rho_M,D_M  \in W^{1,\infty}((0,T)\times \Sigma\times \T\times Y\times Z)$ are periodic in $\tau, y_2$ and $z$ variable. In addition we assume there exist positive constants $ \rho_{min}$, $\rho_{max}$, $D_{min}$ and $ D_{max}$ such that $\rho_{min}\leq\rho_M^\es(t,x)\leq\rho_{max}$ and  $D_{min}\leq D_M^\es(t,x)\leq D_{max}$.

\item[(A2)] For the initial conditions and source term, we assume $v_1\in C_c^2(\mol), v_2\in C_c^1(\mol)$ and $f_L\in C_c^2((0,T)\times \mol)$.
 Moreover, we assume that $\es$ sufficiently small so that $v_1\in C_c^2(\oll), v_2\in C_c^1(\oll)$ and $f_L\in C_c^2((0,T)\times \oll)$ for every $\es$.
\item[(A3)] For fixed $(t,x_2)\in (0,T)\times \Sigma$,
let $\mathcal{L}:\mathcal{H}_{\sharp}(\T,Y)\rightarrow \mathcal{H}_{\sharp}(\T,Y)^*$ denote the  wave operator 
 \begin{align*}
         \mathcal{L}(v):= \frac{\partial}{\partial \tau}\left(\bar \rho_M \frac{\partial v}{\partial \tau}\right)-\mathrm{div}_y\left(\bar D_M\nabla_yv\right)
     \end{align*}
     with the periodic boundary condition with respect to the variables $\tau$ and $y_2$ and the homogeneous Neumann boundary condition
     \begin{align*}
         \left(\bar D_M\nabla_yv\right)\cdot \nu=0 \qquad\mbox{ on }\qquad Y_L\cup Y_R
     \end{align*}
     where $\bar \rho_M:=\int_Z\rho_M\di{z}$ and $\bar D_M=\begin{bmatrix}
            \int_Z D_M\di{z}&0\\
            0&  \left(\int_Z\frac{1}{ D_M}\di{z}\right)^{-1}\end{bmatrix}$. 
            Then we assume the range of $\mathcal{L}$ is closed in $\mathcal{H}_{\sharp}(\T,Y)^*$ and $\operatorname{Ker}(\mathcal{L})=\operatorname{span}\{1\}.$

\end{itemize}

\begin{remark}
In our microscopic model, we consider the coefficients depending on two temporal scales and three spatial scales. The variable $\frac{x_2}{\es}$ describes oscillations at scale $\es$, and the variable $\frac{x_2}{\es^2}$ represents a finer and more rapidly oscillating structure in the vertical direction. The additional scale can be interpreted as stratification or finer substructure of the layer, naturally leading to a reiterated homogenization analysis. Without much difficulty, the methodology and the analysis of our paper can extend to an arbitrary number of well-separated spatial and temporal scales.
\end{remark}
\begin{remark}
    The Assumptions $(A1)$ and $(A2)$ are natural, and we use them throughout the paper. The Assumption $(A3)$ means a nonresonance condition at the microscopic scale. This assumption is only  used to write our macroscopic model in terms of suitable cell problems. Specifically, it is needed for the uniqueness of the hyperbolic cell problem. However, if the coefficients $\rho_M^\es$ and $D_M^\es$ are independent of the fast time scale $\frac{t}{\es}$, then  the assumption $(A3)$ is no longer needed. Furthermore, if the coefficients are independent of the higher-order spatial scale $\frac{x_2}{\es^2}$, then we get $\bar\rho_M=\rho_M$ and $\bar D_M:=D_MI$.
\end{remark}
\subsection{Weak formulation and solvability}

Here we introduce the concept of a weak formulation to the microscopic problem and discuss the solvability of the problem.
\begin{definition}
 The weak formulation of the problem \eqref{eq:microscopicul}--\eqref{eq:jumpconditionforflux}is to 
find $u^\es\in L^2(0,T;H_{\sharp}^1(\Omega))$ with $\frac{\partial u^\es}{\p t}\in L^2(0,T;L^2(\Omega))$ such that 
\begin{multline}\label{eq:weakform}
    -\rho_L\int_0^T\int_{\oll}  \dfrac{\partial u_L^\es}{\partial t}  \dfrac{\partial \phi_L}{\partial t}\di{x}\di{t}-  \dfrac{1}{\es} \int_0^T\int_{\omm}  \left({\rho_M^\es} \dfrac{\partial u_M^\es}{\partial t}\right) \dfrac{\partial \phi_M}{\partial t}\di{x}\di{t} -    \rho_R\int_0^T\int_{\orr} \dfrac{\partial u_R^\es}{\partial t}\dfrac{\partial \phi_R}{\partial t} \di{x}\di{t}\\
    +D_L\int_0^T\int_{\oll}\nabla u_L^\es \cdot \nabla \phi_L   \di{x}\di{t} +   \dfrac{1}{\es}\int_0^T\int_{\omm}  D_M^\es  \nabla u_M^\es\cdot\nabla \phi_M \di{x}\di{t}+D_R\int_0^T\int_{\orr} \nabla u_R^\es\cdot\nabla \phi_R \di{x} \di{t}
   \\
    = \int_0^T\int_{\oll} f_L\phi_L \di{x}\di{t}+\int_{\oll}\rho_L v_2\phi_L(0)\di{x}
\end{multline}
for all test function $\phi\in L^2((0,T;H^1_{\sharp}(\Omega))$ with $\frac{\p \phi}{\p t}\in L^2((0,T;L^2(\Omega))$ and $\phi(T)=0$ with $u^\es(0)=v_1$.
\end{definition}

Equivalently, the identity \eqref{eq:weakform} can be written as
\begin{multline*}
    -\int_0^T\int_\Omega \hat{\rho^\es}\dfrac{\partial u^\es}{\partial t}  \dfrac{\partial \phi}{\partial t}\di{x}\di{t}+\int_0^T\int_\Omega\hat{D^\es}\nabla u^\es \cdot \nabla\phi \di{x}\di{t}=\int_0^T\int_\Omega f_L\phi\di{x}\di{t}+\int_{\Omega}\rho_L v_2\phi(0)\di{x}.
\end{multline*}

 The aforementioned problem is a linear wave equation with space-time-dependent coefficients. From assumptions $(A1)$ and $(A2)$, for any fixed $\es$,  the existence and uniqueness of the weak solution follow from energy estimates and the Galerkin method (we refer to \cite{casado2011homogenization} for details). Furthermore, from the standard regularity lifting argument, we get the following additional regularity for the solution
 \begin{align*}
     \frac{\p^2 u^\es }{\p t^2}\in L^2(0,T;L^2(\Omega)) \quad \frac{\p u^\es }{\p t}\in L^2(0,T;H^1_{\text{pw}}(\Omega))\quad u^\es \in L^2(0,T;H^2_{\text{pw}}(\Omega)).
 \end{align*}

 \section{Uniform energy estimates and compactness results}\label{section:Uniform energy estimates and compactness results}

 In this section, we discuss the uniform energy estimates and related compactness results for the microscopic problem. In the first part of the section, we examine the conditions under which the uniform energy bounds can be obtained.  We then study the microscopic solution in the bulk domains. Using the energy estimates, we show that as  $\es\rightarrow 0$, the solution converges strongly in $L^2$ while its first-order spatial and temporal derivatives converge weakly in $L^2$.
 Next, using the energy estimate and the trace theorem, we derive a compactness result on the interface $L^\es$ and $R^\es$.  

 To find the limiting behavior of the solution in the heterogeneous layer, we require a tool that simultaneously accounts for homogenization with multiple  scales and dimension reduction. Therefore, we introduce the concept of multiscale convergence for thin layer and derive associated compactness results.   Finally, we determine the homogenization limit of $u_M^\es, \nabla u_M^\es$ and $\frac{\p  u_M^\es}{\p t}$ in the layer.

\subsection{Energy estimates}
To find the effective model for the microscopic wave propagation problem, we require a uniform energy bound of the form

 \begin{subequations}
	\begin{align}
     \left\|\ u_L^\es\right\|_{L^{\infty}(0,T;L^2(\oll))}+  \left\|\ u_R^\es\right\|_{L^{\infty}(0,T;L^2(\orr))}+  \dfrac{1}{\sqrt{\es}}\left\|\ u_M^\es\right\|_{L^{\infty}(0,T;L^2(\omm))}&\leq C,\label{eq:energy_estimate_1_case_1}\\
	    \left\|\ \dfrac{\partial u_L^\es}{\partial t}\right\|_{L^{\infty}(0,T;L^2(\oll))}+  \left\|\ \dfrac{\partial u_R^\es}{\partial t}\right\|_{L^{\infty}(0,T;L^2(\orr))}+  \dfrac{1}{\sqrt{\es}}\left\|\ \dfrac{\partial u_M^\es}{\partial t}\right\|_{L^{\infty}(0,T;L^2(\omm))}&\leq C,\label{eq:energy_estimate_2_case_1}\\
	    \|\nabla u_L^\es\|_{L^{\infty}(0,T;L^2(\oll))}+\|\nabla u_R^\es\|_{L^{\infty}(0,T;L^2(\orr))}+\dfrac{1}{\sqrt{\es}}\|\nabla u_M^\es\|_{L^{\infty}(0,T;L^2(\omm))}&\leq C\label{eq:energy_estimate_3_case_1},
	\end{align}
\end{subequations}
 where $C>0$ is a constant independent of $\es$.
 Since we have a system of linear wave equations with smooth data, one should expect the solution to have a uniform energy bound. But, in general, this is not true due to the lack of energy conservation caused by the time-dependent coefficients. Here we discuss two physically relevant choices of $\rho_M$ and $D_M$ for which  we show that there exists a uniform energy bound, but with an example, we discuss the possibilities of the non-stability of the solution.
 \subsubsection{Case 1: no oscillation on the fast temporal scale}
 In this case, we assume that the data $\rho^\es$ and $D^\es$ do not oscillate with respect to the fast time variable. In other words, we assume $\rho_M\left(t,x_2,\frac{t}{\es},\frac{x}{\es},\frac{x_2}{\es^2}\right)\equiv \rho_M\left(t,x_2,\frac{x}{\es},\frac{x_2}{\es^2}\right)$, $D_M\left(t,x_2,\frac{t}{\es},\frac{x}{\es},\frac{x_2}{\es^2}\right)\equiv D_M\left(t,x_2,\frac{x}{\es},\frac{x_2}{\es^2}\right)$.
 We denote this assumption by $(B1)$. Structures of this type are commonly encountered in the modeling of metasurfaces and metascreens.  We now show that under these assumptions we obtain the required energy bound.
	\begin{lemma}\label{lemma:energy_estimate_for_case_1}
	Assume  $(A1),(A2)$ and $(B1)$ holds. Then the weak solution $u^\es$ to the microscopic problem \eqref{eq:microscopicul}--\eqref{eq:jumpconditionforflux} satisfies the uniform energy estimates
    \eqref{eq:energy_estimate_1_case_1}--\eqref{eq:energy_estimate_3_case_1}.
	\end{lemma}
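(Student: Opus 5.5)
Under (B1) the coefficients depend on time only through the slow variable $t$, so $\p_t \rho_M^\es$ and $\p_t D_M^\es$ are bounded uniformly in $\es$ by $\|\rho_M\|_{W^{1,\infty}}$ and $\|D_M\|_{W^{1,\infty}}$. We derive an energy identity by testing with $\p_t u^\es$ and close it with Gronwall. Using the regularity $\p_t^2 u^\es\in L^2(0,T;L^2(\Omega))$ and $\p_t u^\es\in L^2(0,T;H^1_{\text{pw}}(\Omega))$, we may take $\phi=\p_t u^\es\mathbbm{1}_{(0,s)}$ in the strong form (or argue with a Galerkin approximation and pass to the limit). We then integrate by parts in time. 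The transmission conditions cancel the interface terms, and periodicity removes the outer boundary terms. We introduce the energy
\begin{align*}
E^\es(s):=\frac12\int_\Omega \hat\rho^\es(s)\Big|\frac{\p u^\es}{\p t}(s)\Big|^2\di{x}+\frac12\int_\Omega \hat D^\es(s)|\nabla u^\es(s)|^2\di{x}.
\end{align*}
Since $\p_t(\hat\rho^\es\p_t u)\p_t u=\tfrac12\p_t(\hat\rho^\es|\p_t u|^2)+\tfrac12(\p_t\hat\rho^\es)|\p_t u|^2$ and $\hat D^\es\nabla u\cdot\nabla\p_t u=\tfrac12\p_t(\hat D^\es|\nabla u|^2)-\tfrac12(\p_t\hat D^\es)|\nabla u|^2$, this gives
\begin{align*}
E^\es(s)=E^\es(0)+\int_0^s\!\!\int_\Omega\Big(-\tfrac12\p_t\hat\rho^\es\,|\p_t u^\es|^2+\tfrac12\p_t\hat D^\es\,|\nabla u^\es|^2\Big)\di{x}\di{t}+\int_0^s\!\!\int_{\oll} f_L\,\p_t u_L^\es\di{x}\di{t}.
\end{align*}

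The initial energy is $E^\es(0)=\tfrac12\rho_L\|v_2\|^2+\tfrac12 D_L\|\nabla v_1\|^2$, which is independent of $\es$ because the data are supported in $\oll$ by (A2). In $\oll$ and $\orr$ the coefficients are constant, so the time-derivative terms vanish there. In $\omm$ the terms read $\frac1\es\p_t\rho_M^\es$ and $\frac1\es\p_t D_M^\es$, and here (B1) is crucial. The chain rule gives $\p_t\rho_M^\es=(\p_t\rho_M)(t,x_2,\tfrac x\es,\tfrac{x_2}{\es^2})$ with no $\frac1\es$ factor, so we obtain $|\p_t\hat\rho^\es|\le C\hat\rho^\es/\rho_{min}$ and $|\p_t\hat D^\es|\le C\hat D^\es/D_{min}$ on the whole of $\Omega$, including the $\frac1\es$ weight in the layer. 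Hence the middle integral is bounded by $C\int_0^s E^\es$. The source term is bounded by $\tfrac12\|f_L\|^2_{L^2}+C\int_0^sE^\es$. Gronwall then yields $\sup_{s}E^\es(s)\le C$ uniformly in $\es$, and the lower bounds in (A1) convert this into \eqref{eq:energy_estimate_2_case_1} and \eqref{eq:energy_estimate_3_case_1}, with the factor $\frac1{\sqrt\es}$ coming from the weights $\frac1\es\rho_M^\es$ and $\frac1\es D_M^\es$.

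For \eqref{eq:energy_estimate_1_case_1} we write $u^\es(s)=u^\es(0)+\int_0^s\p_t u^\es$. This gives $\|u^\es(s)\|_{L^2(\Omega_i^\es)}\le\|v_1\|+T\|\p_t u^\es\|_{L^\infty L^2(\Omega_i^\es)}$, and in $\omm$ the initial value vanishes, so the $\frac1{\sqrt\es}$-weighted bound follows directly from \eqref{eq:energy_estimate_2_case_1}.

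The main obstacle is making the test with $\p_t u^\es$ rigorous: the weak formulation only admits $\phi$ with $\phi(T)=0$, and $\p_t u^\es$ is not a priori regular enough to serve as such a test function. We would handle this through the Galerkin approximation, or by using the lifted regularity stated above, and check that the interface fluxes cancel. The uniformity step itself is straightforward once it is observed that without the $\frac t\es$ dependence no $\frac1\es$ factor arises from differentiating the coefficients in time.
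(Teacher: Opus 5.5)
Your proposal is correct and is essentially the paper's argument. You test with $\mathds{1}_{(0,s)}\p_t u^\es$, split off the terms containing $\p_t\rho_M^\es$ and $\p_t D_M^\es$, and note that under (B1) these carry no extra factor $\es^{-1}$, so the $\es^{-1}$-weighted layer energy absorbs them; you then close with Gr\"onwall and get \eqref{eq:energy_estimate_1_case_1} from the fundamental theorem of calculus, exactly as the paper does. Your sign for the $\p_t\hat\rho^\es$ term and your use of \eqref{eq:energy_estimate_2_case_1} (rather than \eqref{eq:energy_estimate_3_case_1}, as the paper writes) in that last step are, if anything, slightly more accurate than the written proof.
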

	\begin{proof}
	Let $t\in(0,T)$, we derive the energy estimate by choosing the test function $\phi= \mathds{1}_{(0,t)}\dfrac{\partial u^\es}{\partial t}$ in \eqref{eq:weakform}. We get
	\begin{multline*}
    \rho_L\int_0^t\int_{\oll}  \dfrac{\partial^2u_L^\es}{\partial t^2}  \dfrac{\partial u_L^\es}{\partial t}\di{x}\di{t}+  \dfrac{1}{\es}\int_0^t\int_{\omm} \dfrac{\partial }{\partial t}\left(\rho_M^\es \dfrac{\partial u_M^\es}{\partial t}\right)  \dfrac{\partial u_M^\es}{\partial t}\di{x}\di{t}+\rho_R \int_0^t\int_{\orr}  \dfrac{\partial^2u_R^\es}{\partial t^2}  \dfrac{\partial u_R^\es}{\partial t}\di{x}\di{t}\\
    +D_L\int_0^t\int_{\oll}\nabla u_L^\es\cdot \nabla  \left(\dfrac{\partial u_L^\es}{\partial t} \right) \di{x} \di{t}
    + D_R\int_0^t\int_{\orr} \nabla u_R^\es\cdot\nabla \left(\dfrac{\partial u_R^\es}{\partial t} \right) \di{x} \di{t}
   \\ + \dfrac{1}{\es}\int_0^t\int_{\omm}  D_M^\es  \nabla u_M^\es\cdot\nabla \left(\dfrac{\partial u_M^\es}{\partial t} \right) \di{x}\di{t} = \int_0^t\int_{\oll} f_L\left(\dfrac{\partial u_L^\es}{\partial t} \right) \di{x}\di{t}.
\end{multline*}

By rearranging the terms, we obtain
	\begin{multline*}
   \dfrac{\rho_L}{2}\int_0^t\dfrac{d}{dt}\left\|\dfrac{\partial u_L^\es}{\partial t} \right\|_{L^2(\oll)}^2\di{t}+   \dfrac{1}{2\es}\int_0^t\dfrac{d}{dt}\left\|\sqrt{\rho_M^\es}\dfrac{\partial u_M^\es}{\partial t} \right\|_{L^2(\omm)}^2\di{t}+  \dfrac{\rho_R}{2}\int_0^t\dfrac{d}{dt}\left\|\dfrac{\partial u_R^\es}{\partial t} \right\|_{L^2(\orr)}^2\di{t}\\
   + \dfrac{D_L}{2}\int_0^t\dfrac{d}{dt}\int_{\oll}|\nabla u_L^\es|^2\di{x}\di{t}+
   \dfrac{D_R}{2}\int_0^t\dfrac{d}{dt}\int_{\orr}|\nabla u_R^\es|^2\di{x}\di{t}
   +\dfrac{1}{2\es}\int_0^t\dfrac{d}{dt}\int_{\omm}D_M^\es |\nabla u_M^\es|^2 \di{x}\di{t}\\
     = \int_0^t\int_{\oll} f_L\left(\dfrac{\partial u_L^\es}{\partial t} \right) \di{x}\di{t}+\dfrac{1}{2\es}\int_0^t\int_{\omm}\dfrac{\partial \rho_M^{\es}}{\partial t}\left(\dfrac{\partial u_M^\es}{\partial t}\right)^2\di{x}\di{t}+\dfrac{1}{2\es}\int_0^t\int_{\omm}\dfrac{\partial D_M^\es}{\partial t} |\nabla u_M^\es|^2 \di{x}\di{t}.
\end{multline*}
Fundamental theorem of calculus yields
	\begin{multline*}
   \dfrac{\rho_L}{2}\left\|\dfrac{\partial u_L^\es}{\partial t} \right\|_{L^2(\oll)}^2+   \dfrac{1}{2\es}\left\|\sqrt{\rho_M^\es}\dfrac{\partial u_M^\es}{\partial t} \right\|_{L^2(\omm)}^2+  \dfrac{\rho_R}{2}\left\|\dfrac{\partial u_R^\es}{\partial t} \right\|_{L^2(\orr)}^2\\
   + \dfrac{D_L}{2}\int_{\oll}|\nabla u_L^\es|^2\di{x}+
   \dfrac{D_R}{2}\int_{\orr}|\nabla u_R^\es|^2\di{x}
   +\dfrac{1}{2\es}\int_{\omm}D_M^\es |\nabla u_M^\es|^2 \di{x}\\
     = \int_0^t\int_{\oll} f_L\left(\dfrac{\partial u_L^\es}{\partial t} \right) \di{x}\di{t}+\dfrac{1}{2\es}\int_0^t\int_{\omm}\dfrac{\partial \rho_M^{\es}}{\partial t}\left(\dfrac{\partial u_M^\es}{\partial t}\right)^2\di{x}\di{t}+\dfrac{1}{2\es}\int_0^t\int_{\omm}\dfrac{\partial D_M^\es}{\partial t} |\nabla u_M^\es|^2 \di{x}\di{t}\\
     +\dfrac{\rho_L}{2}\|v_2\|_{L^2(\oll)}^2+\dfrac{D_L}{2}\|\nabla v_1\|_{L^2(\oll)}^2.
\end{multline*}
Now using the assumptions $(A1)$ and $(B1)$ together with Cauchy-Schwarz inequality, we obtain
	\begin{multline*}
   \dfrac{\rho_L}{2}\left\|\dfrac{\partial u_L^\es}{\partial t} \right\|_{L^2(\oll)}^2+   \dfrac{\rho_{min}}{2\es}\left\|\dfrac{\partial u_M^\es}{\partial t} \right\|_{L^2(\omm)}^2+  \dfrac{\rho_R}{2}\left\|\dfrac{\partial u_R^\es}{\partial t} \right\|_{L^2(\orr)}^2\\
   + \dfrac{D_L}{2}\int_{\oll}|\nabla u_L^\es|^2\di{x}+
   \dfrac{D_R}{2}\int_{\orr}|\nabla u_R^\es|^2\di{x}
   +\dfrac{D_{min}}{2\es}\int_{\omm} |\nabla u_M^\es|^2 \di{x}\\
     \leq \|f_L\|_{L^2(0,t;L^2(\oll))}^2+\left\|\dfrac{\partial u_L^\es}{\partial t}\right\|_{L^2(0,t;L^2(\oll))}^2+\dfrac{C}{\es}\left\|\partial_tu_M^\es\right\|_{L^2(0,t;L^2(\omm))}^2+\dfrac{C}{\es}\left\|\nabla u_M^\es\right\|_{L^2(0,t;L^2(\omm))}^2\\
     +\dfrac{\rho_L}{2}\|v_2\|_{L^2(\oll)}^2+\dfrac{D_L}{2}\|v_1\|_{H^1(\oll)}^2.
\end{multline*}
From Grönwall's inequality and $(A2)$, we get the estimates \eqref{eq:energy_estimate_2_case_1} and \eqref{eq:energy_estimate_3_case_1}. It remains to show the estimate \eqref{eq:energy_estimate_1_case_1}. 
 From the fundamental theorem of calculus, for $i\in\{L,M,R\}$ we have
    \begin{align*}
        u_i^\es(t)=u_i^\es(0)+\int_0^t \dfrac{\partial u_i^\es(s)}{\partial t}\di{s}.
    \end{align*}
    Taking $L^2$ norm on both sides, using Minkowski's inequality together with $(A2)$ and \eqref{eq:energy_estimate_3_case_1}, we get \eqref{eq:energy_estimate_1_case_1}.
	\end{proof}
\subsubsection{Case 2: Moving heterogeneous layer }
In Case 1, we avoided the fast temporal oscillation to derive the energy bound. Here we give a physically relevant example, where we have both spatial and temporal oscillations. In this particular scenario, at the microscale, we assume the material property of the periodic layer moves vertically in a wave-like fashion with a velocity $c$. In other words, we assume $\rho_M\left(t,x_2,\frac{t}{\es},\frac{x}{\es},\frac{x_2}{\es^2}\right)\equiv \rho_M\left(\frac{x_1}{\es},\frac{x_2-ct}{\es}\right)$ and $D_M\left(t,x_2,\frac{t}{\es},\frac{x}{\es},\frac{x_2}{\es^2}\right)\equiv D_M\left(\frac{x_1}{\es},\frac{x_2-ct}{\es}\right)$, where $c$ is a positive constant. In addition, we assume  there exists $\lambda_L,\lambda_M,\lambda_R>0$ such that $D_{L}-c^2\rho_{L}>\lambda_L$, $D_{R}-c^2\rho_{R}>\lambda_R$ and $D_M^\es-\rho_M^\es c^2>\lambda_M$.
We denote these assumptions as $(B2)$.  Such models are useful to construct time-modulated laminated media relevant to phononic metamaterials and metasurfaces designed for wave filtering, frequency conversion, directional band gaps, asymmetric transmission, and non-reciprocal elastic or acoustic wave propagation \cite{Nassar2017Modulated}. We show that under this space-time modulated structure, we get the uniform energy bound.
\begin{lemma}
	   Assume $(A1),(A2)$ and $(B2)$ holds. Then the weak solution $u^\es$ to the microscopic problem \eqref{eq:microscopicul}--\eqref{eq:jumpconditionforflux} satisfies the energy estimates \eqref{eq:energy_estimate_1_case_1}--\eqref{eq:energy_estimate_3_case_1}.
	\end{lemma}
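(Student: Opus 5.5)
The plan is to remove the fast time dependence by passing to a frame that moves vertically with speed $c$, where all coefficients become time independent. A second-order energy identity then holds in that frame without any $O(1/\es)$ remainder. The time-derivative terms in the proof of Lemma~\ref{lemma:energy_estimate_for_case_1} cannot be reused directly: here $\p_t\rho_M^\es=-\tfrac{c}{\es}(\p_{y_2}\rho_M)(\cdot)$ is of order $1/\es$, so the Grönwall constant would blow up.

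Because of the periodic boundary conditions we may identify $\Omega$ with a torus. We define $w^\es(t,x_1,\xi):=u^\es(t,x_1,\xi+ct)$. The interfaces $L^\es,R^\es$ are vertical and the subdomains are invariant under vertical shifts, so the transmission conditions are preserved. Under $(B2)$ the coefficients of $w^\es$ are $\rho_M(x_1/\es,\xi/\es)$ and $D_M(x_1/\es,\xi/\es)$ in the layer and constants in the bulk, hence independent of $t$. With $\mathcal{D}_t:=\p_t-c\p_\xi$, the equation for $w^\es$ reads
\begin{align*}
\mathcal{D}_t\bigl(\hat\rho^\es\,\mathcal{D}_t w^\es\bigr)-\mathrm{div}\bigl(\hat D^\es\nabla w^\es\bigr)=\tilde f_L,\qquad \tilde f_L(t,x_1,\xi):=f_L(t,x_1,\xi+ct).
\end{align*}
Equivalently, in the original variables, we test \eqref{eq:weakform} with $\phi=\mathds{1}_{(0,t)}\bigl(\p_t u^\es+c\,\p_{x_2}u^\es\bigr)$. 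The regularity $\p_t^2u^\es\in L^2(L^2)$, $\p_tu^\es\in L^2(H^1_{\rm pw})$, $u^\es\in L^2(H^2_{\rm pw})$ stated after the weak formulation makes this admissible, after integrating by parts piecewise and using the flux transmission conditions.

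The key step is to verify the conserved energy
\begin{align*}
E^\es(t)=\frac12\int_\Omega \hat\rho^\es\,|\mathcal{D}_t w^\es|^2+\hat D^\es|\p_{x_1}w^\es|^2+\bigl(\hat D^\es-c^2\hat\rho^\es\bigr)|\p_\xi w^\es|^2\di{x},
\end{align*}
obtained by testing with $\p_t w^\es$. The cross terms $c\,\hat\rho^\es\,\p_\xi(\cdot)\,\p_t w^\es$ are handled by integrating by parts in $\xi$, which is legitimate on the torus. Since the coefficients do not depend on $t$, every term becomes an exact time derivative, giving $E^\es(t)=E^\es(0)+\int_0^t\int_{\oll}\tilde f_L\,\p_tw^\es\di{x}\di{s}$. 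Carrying out this algebra carefully, including the interface contributions and the factor $1/\es$ in $\hat\rho^\es,\hat D^\es$, is the main obstacle. Everything else is routine.

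Once the identity holds, $(B2)$ gives $\hat D^\es-c^2\hat\rho^\es\ge\lambda_L,\lambda_R$ in the bulk and $\ge\lambda_M/\es$ in the layer. Hence $E^\es$ controls, with the correct $1/\es$ weights, $\|\mathcal{D}_tw^\es\|^2$, $\|\p_{x_1}w^\es\|^2$ and $\|\p_\xi w^\es\|^2$. The initial energy depends only on $v_1,v_2$ through $(A2)$. The source term is bounded by Cauchy--Schwarz: $\|\p_t w^\es\|\le\|\mathcal{D}_tw^\es\|+c\|\p_\xi w^\es\|$, both controlled by $E^\es$, and $\|\tilde f_L\|_{L^2}=\|f_L\|_{L^2}$. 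Grönwall's inequality then gives $E^\es(t)\le C$ uniformly in $\es$.

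Returning to the original variables, $\nabla u^\es$ is the shifted $\nabla w^\es$, and $\p_tu^\es=\mathcal{D}_tw^\es$ evaluated at the shifted point, since $\p_t u=\p_t w-c\p_\xi w$. Shifts preserve $L^2$ norms on each subdomain, so \eqref{eq:energy_estimate_2_case_1} and \eqref{eq:energy_estimate_3_case_1} follow. Finally, \eqref{eq:energy_estimate_1_case_1} is obtained exactly as in Lemma~\ref{lemma:energy_estimate_for_case_1}, from $u_i^\es(t)=u_i^\es(0)+\int_0^t\p_tu_i^\es\di{s}$ and Minkowski's inequality.
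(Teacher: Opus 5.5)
Your strategy is the paper's: pass to the frame $\xi=x_2-ct$, in which the layer coefficients are time independent. You test with the time derivative of the transformed solution and cancel the first-order cross terms by integrating by parts in the vertical variable on the torus. Then $(B2)$ makes $\mathrm{diag}\bigl(\hat D^\es,\hat D^\es-c^2\hat\rho^\es\bigr)$ uniformly positive definite. Writing this test function as $\p_tu^\es+c\,\p_{x_2}u^\es$ in the original variables is a convenient way to avoid rewriting the weak form, but changes nothing else.

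\textbf{The error.} The energy you display, with $\hat\rho^\es|\mathcal D_tw^\es|^2$ as kinetic term, is not what this test produces, and it is not conserved. Set $a=\p_tw^\es$ and $b=\p_\xi w^\es$, so that $\mathcal D_tw^\es=a-cb$ and $\p_tb=\p_\xi a$. One integration by parts in $\xi$ gives
\begin{align*}
\int_\Omega\mathcal D_t\bigl(\hat\rho^\es\mathcal D_tw^\es\bigr)\,a\di{x}
=\int_\Omega\hat\rho^\es\bigl[(\p_ta-c\,\p_\xi a)\,a+c\,(a-cb)\,\p_\xi a\bigr]\di{x}
=\frac{d}{dt}\,\frac12\int_\Omega\hat\rho^\es\bigl(a^2-c^2b^2\bigr)\di{x}.
\end{align*}
So the conserved energy is
\begin{align*}
E^\es(t)=\frac12\int_\Omega\hat\rho^\es|\p_tw^\es|^2+\hat D^\es|\p_{x_1}w^\es|^2+\bigl(\hat D^\es-c^2\hat\rho^\es\bigr)|\p_\xi w^\es|^2\di{x},
\end{align*}
which is exactly the paper's energy with $\widetilde D^\es$. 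In the original variables, your functional equals $\frac12\int_\Omega\hat\rho^\es|\p_tu^\es|^2+\hat D^\es|\nabla u^\es|^2-c^2\hat\rho^\es|\p_{x_2}u^\es|^2\di{x}$. Already for a constant-coefficient wave equation on the torus, the first two terms are conserved while $\int|\p_{x_2}u|^2$ oscillates for a standing wave $\cos(\omega t)\sin(kx_2)$. Hence the identity you state as the key step is false as written.

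\textbf{The fix.} With $\p_tw^\es$ in the kinetic term, the rest of your argument goes through, and more directly. $E^\es$ controls $\|\p_tw^\es\|$ (with the $\es^{-1/2}$ weight in the layer) and $\|\nabla w^\es\|$. The source term is then handled by Cauchy--Schwarz and Grönwall without your detour through $\mathcal D_tw^\es$. The initial energy now involves $\p_tw^\es(0)=v_2+c\,\p_{x_2}v_1$, which is bounded by (A2). Finally, $\p_tu^\es$ is recovered from $\|\mathcal D_tw^\es\|\le\|\p_tw^\es\|+c\|\p_\xi w^\es\|$, which is the paper's \eqref{eq:norm_estimate_2}.

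Equivalently, in the original variables the invariant is $\frac12\int_\Omega\hat\rho^\es\bigl(|\p_tu^\es|^2+2c\,\p_tu^\es\,\p_{x_2}u^\es\bigr)+\hat D^\es|\nabla u^\es|^2\di{x}$, the standard energy plus a momentum-type cross term. Its coercivity is exactly the condition $\hat D^\es-c^2\hat\rho^\es>0$ from $(B2)$.
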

	\begin{proof}
	We first derive the energy estimate for our problem in a moving coordinate system. For that, we use the following change of variables 
	\begin{align*}
	    X_1&:=x_1 & X_2&:=x_2-ct, 
	\end{align*}
	and we denote
    \begin{align*}
        \widetilde{u}_{L}^\es(t,X):={u}_{L}^\es(t,x)\qquad \widetilde{u}_{R}^\es(t,X):={u}_{R}^\es(t,x) \qquad \widetilde{u}_{M}^\es(t,X):={u}_{M}^\es(t,x),
    \end{align*}
    \begin{align*}
        \widetilde{f}_L(t,X):=f_L(t,x) \qquad \widetilde{\phi}(t,X):=\phi(t,x).
    \end{align*}
    From the change of variables, we have 
    \begin{align*}
        \rho_M^\es(t,x)=\rho_M(\tfrac{X}{\es})\qquad  D_M^\es(t,x)=D_M(\tfrac{X}{\es}),
    \end{align*}    
  and  the following chain rules
	\begin{align*}
	    \dfrac{\partial u_{i}^\es(t,x)}{\partial t}&=\dfrac{\partial \widetilde{u}_{i}^\es(t,X)}{\partial t}-c\dfrac{\partial \widetilde{u}_{i}^\es(t,X)}{\partial{X_2}},\\
        \dfrac{\partial \phi_{i}(t,x)}{\partial t}&=\dfrac{\partial \widetilde{\phi}_{i}(t,X)}{\partial t}-c\dfrac{\partial \widetilde{\phi}_{i}(t,X)}{\partial{X_2}},\\ 
        \dfrac{\partial \widetilde{u}^\es}{\partial t}(0)&=v_2(X)+c\frac{\p v_1(X)}{\p X_2}\\
	    \nabla u_{j}(t,x)&=\nabla_X\widetilde{u}_{j}(t,X),
	\end{align*}
 where $i\in\{L,M,R\}$ and $j\in\{L,R\}$.  
Moreover, using the change of variable in the weak formulation \eqref{eq:weakform}, we get
\begin{multline*}
    -\rho_L\int_0^T\int_{\oll} \left(\dfrac{\partial \widetilde{u}_{L}^\es(t,X)}{\partial t}-c\dfrac{\partial \widetilde{u}_{L}^\es(t,X)}{\partial{X_2}}\right)\left(\dfrac{\partial \widetilde{\phi}_{L}(t,X)}{\partial t}-c\dfrac{\partial \widetilde{\phi}_{L}(t,X)}{\partial{X_2}}\right)\di{X\di{t}}\\
     -\frac{1}{\es}\int_0^T\int_{\omm} \rho_M^\es \left(\dfrac{\partial \widetilde{u}_{M}^\es(t,X)}{\partial t}-c\dfrac{\partial \widetilde{u}_{M}^\es(t,X)}{\partial{X_2}}\right)\left(\dfrac{\partial \widetilde{\phi}_{M}(t,X)}{\partial t}-c\dfrac{\partial \widetilde{\phi}_{M}(t,X)}{\partial{X_2}}\right)\di{X\di{t}}\\
      -\rho_R\int_0^T\int_{\orr} \left(\dfrac{\partial \widetilde{u}_{R}^\es(t,X)}{\partial t}-c\dfrac{\partial \widetilde{u}_{R}^\es(t,X)}{\partial{X_2}}\right)\left(\dfrac{\partial \widetilde{\phi}_{R}(t,X)}{\partial t}-c\dfrac{\partial \widetilde{\phi}_{R}(t,X)}{\partial{X_2}}\right)\di{X\di{t}}\\
       +D_L\int_0^T\int_{\oll}\nabla_X \widetilde{u}_L^\es\cdot\nabla_X  \widetilde{\phi}_L \di{X}\di{t}+\dfrac{1}{\es}\int_0^T\int_{\omm}\left({D}_M^\es \nabla_X \widetilde{u}_M^\es\right)\cdot\nabla_X  \widetilde{\phi}_M\di{X}\di{t}\\
        +D_R\int_0^T\int_{\orr} \nabla_X \widetilde{u}_R^\es\cdot\nabla_X  \widetilde{\phi}_R \di{X}\di{t}=\int_0^T\int_{\oll}\widetilde{f}_L \widetilde{\phi}_L\di{X}\di{t}+\rho_L\int_{\oll}v_2(X)\widetilde{\phi}_L(0)\di{X}
\end{multline*}
By rearranging the terms, we have

\begin{multline*}
    -\rho_L\int_0^T\int_{\oll}\left[ \left(\dfrac{\partial \widetilde{u}_{L}^\es(t,X)}{\partial t}-c\dfrac{\partial \widetilde{u}_{L}^\es(t,X)}{\partial{X_2}}\right)\dfrac{\partial \widetilde{\phi}_{L}(t,X)}{\partial t}-c\dfrac{\partial \widetilde{u}_{L}^\es(t,X)}{\partial{t}}\dfrac{\partial \widetilde{\phi}_{L}(t,X)}{\partial X_2}\right]\di{X}\di{t}\\
     -\frac{1}{\es}\int_0^T\int_{\omm} \rho_M^\es\left[ \left(\dfrac{\partial \widetilde{u}_{M}^\es(t,X)}{\partial t}-c\dfrac{\partial \widetilde{u}_{M}^\es(t,X)}{\partial{X_2}}\right)\dfrac{\partial \widetilde{\phi}_{M}(t,X)}{\partial t}-c\dfrac{\partial \widetilde{u}_{M}^\es(t,X)}{\partial{t}}\dfrac{\partial \widetilde{\phi}_{M}(t,X)}{\partial X_2}\right]\di{X}\di{t}\\
      -\rho_R\int_0^T\int_{\orr} \left[\left(\dfrac{\partial \widetilde{u}_{R}^\es(t,X)}{\partial t}-c\dfrac{\partial \widetilde{u}_{R}^\es(t,X)}{\partial{X_2}}\right)\dfrac{\partial \widetilde{\phi}_{R}(t,X)}{\partial t}-c\dfrac{\partial \widetilde{u}_{R}^\es(t,X)}{\partial{t}}\dfrac{\partial \widetilde{\phi}_{R}(t,X)}{\partial X_2}\right]\di{X\di{t}}\\
       +\int_0^T\int_{\oll}\widetilde{D}_L\nabla_X \widetilde{u}_L^\es\cdot\nabla_X  \widetilde{\phi}_L \di{X}\di{t}+\dfrac{1}{\es}\int_0^T\int_{\omm}\left(\widetilde{D}_M^\es\nabla_X \widetilde{u}_M^\es\right)\cdot\nabla_X  \widetilde{\phi}_M\di{X}\di{t}\\
        +\int_0^T\int_{\orr} \widetilde{D}_R\nabla_X \widetilde{u}_R^\es\cdot\nabla_X  \widetilde{\phi}_R \di{X}\di{t}=\int_0^T\int_{\oll}\widetilde{f}_L \widetilde{\phi}_L\di{X}\di{t}+\rho_L\int_{\oll}v_2(X) \widetilde{\phi}_L(0)\di{X},
\end{multline*}
where $\widetilde{D}_L:=\begin{bmatrix}
D_L & 0 \\
0 & D_L-\rho_Lc^2
\end{bmatrix}$, $\widetilde{D}_M^\es:=\begin{bmatrix}
D_M^\es & 0 \\
0 & D_M^\es-\rho_M^\es c^2
\end{bmatrix}$ and $\widetilde{D}_R:=\begin{bmatrix}
D_R & 0 \\
0 & D_R-\rho_Rc^2
\end{bmatrix}$. 

By performing integration by parts with respect to the time variable as well as the $X_2$ variable, and using $\widetilde{\phi}(T)=0$, we arrive at

\begin{multline*}
    \rho_L\int_0^T\int_{\oll} \frac{\p}{\p t}\left(\dfrac{\partial \widetilde{u}_{L}^\es(t,X)}{\partial t}-2c\dfrac{\partial \widetilde{u}_{L}^\es(t,X)}{\partial{X_2}}\right) \widetilde{\phi}_{L}(t,X)\di{X}\di{t}\\
     +\frac{1}{\es}\int_0^T\int_{\omm}\rho_M^\es \frac{\p}{\p t}\left(\dfrac{\partial \widetilde{u}_{M}^\es(t,X)}{\partial t}-c\dfrac{\partial \widetilde{u}_{M}^\es(t,X)}{\partial{X_2}}\right)\widetilde{\phi}_{M}(t,X)\di{X}\di{t}\\
     -\frac{c}{\es}\int_0^T\int_{\omm} \frac{\p}{\p X_2}\left(\rho_M^\es \dfrac{\partial \widetilde{u}_{M}^\es(t,X)}{\partial{t}}\right)\widetilde{\phi}_{M}(t,X)\di{X}\di{t}\\
      +\rho_R\int_0^T\int_{\orr}\frac{\p}{\p t} \left(\dfrac{\partial \widetilde{u}_{R}^\es(t,X)}{\partial t}-2c\dfrac{\partial \widetilde{u}_{R}^\es(t,X)}{\partial{X_2}}\right) \widetilde{\phi}_{R}(t,X)\di{X\di{t}}\\
       +\int_0^T\int_{\oll}\widetilde{D}_L\nabla_X \widetilde{u}_L^\es\cdot\nabla_X  \widetilde{\phi}_L \di{X}\di{t}+\dfrac{1}{\es}\int_0^T\int_{\omm}\left(\widetilde{D}_M^\es\nabla_X \widetilde{u}_M^\es\right)\cdot\nabla_X  \widetilde{\phi}_M\di{X}\di{t}\\
        +\int_0^T\int_{\orr} \widetilde{D}_R\nabla_X \widetilde{u}_R^\es\cdot\nabla_X  \widetilde{\phi}_R \di{X}\di{t}=\int_0^T\int_{\oll}\widetilde{f}_L \widetilde{\phi}_L\di{X}\di{t}.
\end{multline*}

To determine the energy estimates on the moving coordinates, we choose $\widetilde{\phi}=\mathds{1}_{(0,t)}\dfrac{\p \widetilde{u}}{\p t}$.

Observe that, making use of integration by parts and the periodicity in the vertical direction, we have 
\begin{align*}
    \int_0^t\int_{\oll}\left(\dfrac{\p^2\widetilde{u}_{L}^\es}{\p X_2\p t} \right) \dfrac{\p \widetilde{u}_L}{\p t} \di{X}\di{t}&=\dfrac{1}{2}\int_0^t\int_{\oll} \dfrac{\p}{\p X_2} \left(\dfrac{\p \widetilde{u}_L^\es}{\p t}\right)^2\di{X}\di{t}\\
    &=0.
\end{align*}
Similarly,
\begin{align*}
    \int_0^t \int_{\orr}\left(\dfrac{\p^2\widetilde{u}_{R}^\es}{\p X_2\p t} \right) \dfrac{\p \widetilde{u}_R}{\p t} \di{X}\di{t}&=0,
\end{align*}
and we have
\begin{align*}
   \int_0^t\int_{\omm}\rho_M^\es \dfrac{\p^2\widetilde{u}_{M}^\es}{\p X_2\p t}\dfrac{\p \widetilde{u}_{M}^\es}{\p t} \di{X}\di{t}+ \int_0^t\int_{\omm}\dfrac{\p}{\p X_2}\left(\rho_M^\es\dfrac{\p \widetilde{u}_M^\es}{\p t}\right)\dfrac{\p \widetilde{u}_{M}^\es}{\p t} \di{X}\di{t}=0.
\end{align*}

Therefore, $\widetilde{\phi}=\mathds{1}_{(0,t)}\dfrac{\p \widetilde{u}}{\p t}$ yields
\begin{multline*}
	    {\rho_L}\int_0^t\int_{\oll}\dfrac{\p^2 \widetilde{u}_{L}^\es}{\p t^2}\dfrac{\p \widetilde{u}_{L}^\es}{\p t} \di{X}+\dfrac{1}{\es}\int_0^t\int_{\omm}\rho_M^\es  \dfrac{\p^2 \widetilde{u}_{M}^\es}{\p t^2} \dfrac{\p \widetilde{u}_{M}^\es}{\p t} \di{X}
        +\rho_R\int_0^t\int_{\orr} \dfrac{\p^2 \widetilde{u}_{R}^\es}{\p t^2}\dfrac{\p \widetilde{u}_{R}^\es}{\p t}\di{X}\\
        +\int_0^t\int_{\oll}(\widetilde{D}_L \nabla_X \widetilde{u}_L^\es)\cdot\nabla_X \left(\dfrac{\p \widetilde{u}_{L}^\es}{\p t}\right) \di{X}+\dfrac{1}{\es}\int_0^t\int_{\omm}\left(\widetilde{D}_M^\es \nabla_X \widetilde{u}_M^\es\right)\cdot\nabla_X \left(\dfrac{\p \widetilde{u}_{M}^\es}{\p t}\right)\di{X}\\
        +\int_0^t\int_{\orr}(\widetilde{D}_R \nabla_X \widetilde{u}_R^\es)\cdot\nabla_X \left(\dfrac{\p \widetilde{u}_{R}^\es}{\p t}\right) \di{X}=\int_0^t\int_{\oll}f_L\dfrac{\p \widetilde{u}_{L}^\es}{\p t}\di{X}
	\end{multline*}
Since we assumed $D_{L}-c^2\rho_{L}>\lambda_L$, $D_{R}-c^2\rho_{R}>\lambda_R$ and $D_M^\es-\rho_M^\es c^2>\lambda_M$, we get  $\widetilde{D}_L, \widetilde{D}_M^\es$ and $\widetilde{D}_R$ as uniformly positive definite matrices. Now, following a similar strategy as in the proof of \Cref{lemma:energy_estimate_for_case_1}, we obtain the energy estimates 
\begin{subequations}
\begin{align}
     \left\|\ \widetilde{u}_L^\es\right\|_{L^{\infty}(0,T;L^2(\oll))}+  \left\|\ \widetilde{u}_R^\es\right\|_{L^{\infty}(0,T;L^2(\orr))}+  \dfrac{1}{\sqrt{\es}}\left\|\ \widetilde{u}_M^\es\right\|_{L^{\infty}(0,T;L^2(\omm))}&\leq C,\label{eq:energy_estimate_1_in_moving_coordinate}\\
	    \left\|\ \dfrac{\partial \widetilde{u}_L^\es}{\partial t}\right\|_{L^{\infty}(0,T;L^2(\oll))}+  \left\|\ \dfrac{\partial \widetilde{u}_R^\es}{\partial t}\right\|_{L^{\infty}(0,T;L^2(\orr))}+  \dfrac{1}{\sqrt{\es}}\left\|\ \dfrac{\partial \widetilde{u}_M^\es}{\partial t}\right\|_{L^{\infty}(0,T;L^2(\omm))}&\leq C,\\
	    \|\nabla_X \widetilde{u}_L^\es\|_{L^{\infty}(0,T;L^2(\oll))}+\|\nabla_X\widetilde{u}_R^\es\|_{L^{\infty}(0,T;L^2(\orr))}+\dfrac{1}{\sqrt{\es}}\|\nabla_X\widetilde{u}_M^\es\|_{L^{\infty}(0,T;L^2(\omm))}&\leq C,\label{eq:energy_estimate_3_in_moving_coordinate}
	\end{align}
    \end{subequations}
    where $C>0$ is an $\es$-independent constant. From the definition of $\widetilde{u}_{L},\widetilde{u}_{M}$  and $\widetilde{u}_{R}$, for $i\in\{L,M,R\}$,  we have the following norm estimates
    \begin{subequations}

    \begin{align}
          \left\| \widetilde{u}_i^\es\right\|_{L^2(\Omega_i^\es)}&=  \left\| {u}_i^\es\right\|_{L^2(\Omega_i^\es)}\label{eq:norm_estimate_1}\\
          \left\| \nabla_X \widetilde{u}_i^\es\right\|_{L^2(\Omega_i^\es)}&=  \left\|\nabla {u}_i^\es\right\|_{L^2(\Omega_i^\es)}
    \end{align}
   likewise, using the chain rule, we get
    \begin{align}
        \left\|\frac{\p {u}_i^\es}{\p t}\right\|_{L^2(\Omega_i^\es)}&\le \left\|\frac{\p \widetilde{u}_i^\es}{\p t}\right\|_{L^2(\Omega_i^\es)} + |c|\left\| \nabla_X \widetilde{u}_i^\es\right\|_{L^2(\Omega_i^\es)}\label{eq:norm_estimate_2}.
    \end{align}
    \end{subequations}
    Finally, using the norm estimates \eqref{eq:norm_estimate_1}--\eqref{eq:norm_estimate_2} and the energy estimates \eqref{eq:energy_estimate_1_in_moving_coordinate}--\eqref{eq:energy_estimate_3_in_moving_coordinate}, we establish the energy estimates \eqref{eq:energy_estimate_1_case_1}--\eqref{eq:energy_estimate_3_case_1}.
	\end{proof}

\subsubsection{Discussion on nonstability}
   Let $p^\es(t):=\left(1-\frac{2}{5}\sin{\frac{2t}{\es}}-\frac{1}{100}(1-\cos{\frac{2t}{\es}})^2\right)$.  It is well known that for a wave equation of type 
        \begin{align*}
            \frac{\p^2 v}{\p t^2}-p^\es(t)\Delta v=0 \qquad\mbox{in}\qquad (0,T)\times\Omega
        \end{align*}
       with periodic boundary condition and the initial data $v^\es(0)=0$ and $\frac{d v^\es}{dt}(0)=\sin{\frac{x_1}{\es}}$, the energy blows up as $\es\rightarrow 0$ (see \cite{colombini1984hyperbolic} for details). Similarly, one may expect that by choosing $\rho_M^\es\equiv 1$ and $D_M^\es=p^\es$ we can construct an initial condition such that the uniform energy estimate \eqref{eq:energy_estimate_1_case_1}--\eqref{eq:energy_estimate_3_case_1} fails to hold. Proving such a result requires an  additional detailed technical analysis of how the initial conditions on the bulk domain affect the oscillation and non-stability in the layer through the transmission condition. In this paper, our main aim is to derive the effective model, rather than establishing the sharp conditions for the uniform energy estimates;  such analysis falls beyond the scope of the present work.
\subsection{Multi-scale convergence for thin layer}
	The subdomain $\omm$ has a heterogeneous structure in multiple scales and has a thickness of  $\mathcal{O}(\es)$. In order to derive the homogenization limit, we require a homogenization framework that accounts for multiple spatio-temporal scale homogenization together with dimension reduction. In this section, inspired by the classical multiscale convergence \cite{allaire1996multiscale} and the two-scale convergence for thin layers \cite{maruvsic2000two,neuss2007effective},  we introduce the concept of the weak multi-scale convergence for thin layers.
	\begin{definition}\label{def:multiscale_convergence}
A sequence $v^\es\in L^2(0,T;L^2(\omm))$ is said to be multi-scale convergent (with three spatial and two temporal scales) to $v^0(t,\overline{x},\tau,y,z)\in L^2((0,T)\times \Sigma\times  \T \times Y\times Z)$, if
\begin{multline*}
    \lim_{\es\rightarrow 0}\dfrac{1}{\es}\int_0^T\int_{\omm}v^\es(t,x)\phi\left(t,\overline{x},\dfrac{t}{\es},\dfrac{x}{\es},\dfrac{x_2}{\es^2}\right)\di{t}\di{x}\\
=\mint v^0(t,\overline{x},\tau,y,z)\phi(t,\overline{x},\tau,y,z)\dzdy
\end{multline*}
for all $\phi \in L^2((0,T)\times \Sigma;\mathcal{C}_{\sharp}(\overline{\T\times Y\times Z}))$.
\newline

\begin{remark}
    The notion of multiscale convergence for thin layers can be extended naturally to an arbitrary number of well-separated spatial and temporal scales.
\end{remark}
	\end{definition}

Similar to the concept of classical multiscale convergence, we have an associated compactness result related to the multiscale convergence for thin layers.

	\begin{theorem}\label{theorem:compactness_result}
	Let $v^\es\in L^2(0,T;L^2(\omm))$ be a sequence of functions satisfying
	\begin{align}
	    \dfrac{1}{\sqrt{\es}}\|v^\es\|_{L^2(0,T;L^2(\omm))}\leq C\label{eq:l2bound_on_v_epsilon}
	\end{align}
	for some $\es$-independent constant $C>0$, then up to a subsequence (still denoted by $v^\es$), $v^\es$ converges to some $v^0\in L^2((0,T)\times \Sigma\times \T\times Y\times Z)$ in the sense of Definition \ref{def:multiscale_convergence}.
	\end{theorem}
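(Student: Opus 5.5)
The plan follows the classical Nguetseng--Allaire argument, adapted to the rescaled thin-layer measure $\frac{1}{\es}\di{x}\di{t}$ on $(0,T)\times\omm$. For fixed $\es$ I introduce the linear functional
\begin{align*}
\Lambda_\es(\phi):=\dfrac{1}{\es}\int_0^T\int_{\omm}v^\es(t,x)\phi\left(t,\overline{x},\dfrac{t}{\es},\dfrac{x}{\es},\dfrac{x_2}{\es^2}\right)\di{x}\di{t}
\end{align*}
on the separable Banach space $X:=L^2((0,T)\times\Sigma;\mathcal{C}_{\sharp}(\overline{\T\times Y\times Z}))$.

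\textbf{Step 1 (uniform bound).} By Cauchy--Schwarz,
\begin{align*}
|\Lambda_\es(\phi)|\le \dfrac{1}{\sqrt\es}\|v^\es\|_{L^2(0,T;L^2(\omm))}\left(\dfrac{1}{\es}\int_0^T\int_{\omm}\left|\phi\left(t,\overline{x},\tfrac{t}{\es},\tfrac{x}{\es},\tfrac{x_2}{\es^2}\right)\right|^2\di{x}\di{t}\right)^{1/2}.
\end{align*}
The first factor is bounded by \eqref{eq:l2bound_on_v_epsilon}. For the second, I bound $|\phi(t,\overline{x},\cdot)|$ by its supremum norm in the fast variables and integrate out $x_1\in(-\es/2,\es/2)$, which cancels the $\frac1\es$. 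This gives the bound $\|\phi\|_X$. Hence $\|\Lambda_\es\|_{X^*}\le C$.

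\textbf{Step 2 (weak-$*$ limit).} Since $X$ is separable, Banach--Alaoglu yields a subsequence and some $\Lambda_0\in X^*$ with $\Lambda_\es(\phi)\to\Lambda_0(\phi)$ for every $\phi\in X$.

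\textbf{Step 3 (representation in $L^2$).} This is the key step. It needs the thin-layer oscillation lemma: for every $\phi\in X$,
\begin{align*}
\lim_{\es\to0}\dfrac{1}{\es}\int_0^T\int_{\omm}\left|\phi\left(t,\overline{x},\tfrac{t}{\es},\tfrac{x}{\es},\tfrac{x_2}{\es^2}\right)\right|^2\di{x}\di{t}=\|\phi\|^2_{L^2((0,T)\times\Sigma\times\T\times Y\times Z)}.
\end{align*}
Granting this, passing to the limit in the Step 1 estimate gives $|\Lambda_0(\phi)|\le C\|\phi\|_{L^2((0,T)\times\Sigma\times\T\times Y\times Z)}$. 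Since $X$ is dense in that $L^2$ space, $\Lambda_0$ extends continuously to it. The Riesz representation theorem then provides $v^0$ with $\Lambda_0(\phi)=\mint v^0\phi\dzdy$, which is the convergence of Definition \ref{def:multiscale_convergence}.

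\textbf{Main obstacle: the oscillation lemma.} I would prove it first for $\phi(t,\overline{x},\tau,y,z)=a(t,x_2)b(\tau)c(y)d(z)$ with $a$ continuous, $b,d$ continuous periodic, and $c$ continuous on $\overline{Y}$ and periodic in $y_2$. The substitution $y_1=x_1/\es$ turns the $\frac1\es\int_{-\es/2}^{\es/2}\di{x_1}$ integral into an integral over $y_1\in(-\frac12,\frac12)$. This removes the thin direction and matches $x_1/\es$ to the non-periodic variable $y_1$.

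What remains is a triple-scale average in $(t,x_2)$ with fast periodic variables $t/\es$, $x_2/\es$ and $x_2/\es^2$. The temporal factor converges by the standard Riemann--Lebesgue periodic averaging lemma. For the spatial scales I use the reiterated argument of \cite{allaire1996multiscale}. On each $\es$-cell in $x_2$, the $\es^2$-oscillation of $d$ completes $1/\es$ full periods, so it averages to $\int_Z$ up to $o(1)$. The remaining $\es$-periodic function of $x_2/\es$ then averages to $\int_0^1\di{y_2}$. Uniform continuity of $c$ and $a$ on compact sets controls the errors; here $h=N\es$ ensures exact cell tiling.

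The lemma then extends to all $\phi\in X$ in two stages. First, finite sums of such products are dense in $\mathcal{C}_{\sharp}$ by Stone--Weierstrass. Second, the product $X\ni\phi\mapsto|\phi|^2$ is handled by approximating in $L^2((0,T)\times\Sigma;\mathcal{C}_{\sharp})$ while keeping the Step 1 bound uniform in $\es$. The measurability of $\phi(t,\overline{x},t/\es,x/\es,x_2/\es^2)$ for merely $L^2$ dependence in the slow variables is addressed by this same density argument.
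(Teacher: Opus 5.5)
Your proposal is correct and follows essentially the same route as the paper. Both arguments use the Cauchy--Schwarz bound on the rescaled functional, then Banach--Alaoglu, then the thin-layer oscillation lemma applied to $|\phi|^2$ to obtain $|\Lambda_0(\phi)|\le C\|\phi\|_{L^2}$, and finally density plus Riesz representation. The one difference is that you define the functional directly on $L^2((0,T)\times\Sigma;\mathcal{C}_{\sharp}(\overline{\T\times Y\times Z}))$ rather than on $C([0,T]\times\overline{\Sigma};\mathcal{C}_{\sharp}(\overline{\T\times Y\times Z}))$. Together with your uniform bound $\|\Lambda_\es\|_{X^*}\le C$, this gives convergence for the full test class of Definition~\ref{def:multiscale_convergence}, which the paper's proof leaves implicit.
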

	The proof of the aforementioned Theorem follows an argument similar to \cite[Proposition 4.2]{neuss2007effective} but extends to multiple scales. For completeness, we give the proof in the Appendix.

    \subsection{Convergence in the bulk regions and on the interfaces}
 To find the effective model, we require compactness results of the microscopic solution in the bulk region, in the layer as well as on the interfaces.   This section focuses on finding the limit of the solutions to the microscopic problem in the bulk regions $\oll$ and $\orr$, and on the interfaces $L^\es$ and $R^\es$. In the bulk region, we have the following compactness result.
      \begin{theorem}\label{theorem:convergence in the bulk}
    Assume $(A1)-(A2)$ and \eqref{eq:energy_estimate_1_case_1}--\eqref{eq:energy_estimate_3_case_1} hold. Then for $i\in\{L,R\},$ there exists $u_i^0\in L^2(0,T;\mathcal{H}_{\sharp}(\Omega_i))\cap H^1(0,T;L^2(\Omega_i))$  such that up to a subsequence
    \begin{subequations}  
    \begin{align*}
        \mathds{1}_{\oii}u_i^\es \longrightarrow u_i^0 \qquad\mbox{strongly in}\qquad L^2((0,T)\times\Omega_i),\\
        \mathds{1}_{\oii}\nabla u_i^\es \longrightarrow \nabla u_i^0 \qquad\mbox{weakly in}\qquad L^2((0,T)\times\Omega_i),\\
        \mathds{1}_{\oii}\dfrac{\partial u_i^\es}{\partial t}  \longrightarrow \dfrac{\partial u_i^0}{\partial t} \qquad\mbox{weakly in}\qquad L^2((0,T)\times\Omega_i).
    \end{align*}
    Moreover we have $u_L|_{\Gamma_L}\equiv u_R|_{\Gamma_R}.$
     \end{subequations}
   
    \end{theorem}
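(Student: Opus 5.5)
The difficulty is that the domains $\oii$ depend on $\es$, so I will first pass to fixed domains and then apply Aubin--Lions. For $i=L$ I define the reflected extension $\widetilde u_L^\es$ on $(0,T)\times\Omega_L$ by
\[
\widetilde u_L^\es(t,x_1,x_2):=u_L^\es(t,-\es-x_1,x_2)\quad\text{for } x_1\in(-\es/2,0),
\]
and similarly for $i=R$. Reflection across the line $x_1=-\es/2$ preserves $H^1$ regularity, periodicity in $x_2$, and time differentiability. It also satisfies
\[
\|\widetilde u_L^\es\|_{H^1(\Omega_L)}\le 2\|u_L^\es\|_{H^1(\oll)},
\]
and the same bound holds for $\p_t$. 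By \eqref{eq:energy_estimate_1_case_1}--\eqref{eq:energy_estimate_3_case_1}, $\widetilde u_i^\es$ is bounded in $L^\infty(0,T;\mathcal H_\sharp(\Omega_i))\cap W^{1,\infty}(0,T;L^2(\Omega_i))$ uniformly in $\es$.

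Since $\mathcal H_\sharp(\Omega_i)$ embeds compactly into $L^2(\Omega_i)$, the Aubin--Lions--Simon lemma gives a subsequence with $\widetilde u_i^\es\to u_i^0$ strongly in $L^2((0,T)\times\Omega_i)$ (indeed in $C([0,T];L^2)$). Weak compactness then gives $\nabla\widetilde u_i^\es\rightharpoonup\nabla u_i^0$ and $\p_t\widetilde u_i^\es\rightharpoonup\p_t u_i^0$ weakly, with the limits identified by testing against smooth functions. Hence $u_i^0\in L^2(0,T;\mathcal H_\sharp(\Omega_i))\cap H^1(0,T;L^2(\Omega_i))$. The extension agrees with $u_i^\es$ except on a strip of width $\es/2$, so the differences vanish:
\[
\|\mathds 1_{\oii}u_i^\es-\widetilde u_i^\es\|_{L^2}\le \|\widetilde u_i^\es\|_{L^2((0,T)\times S_\es)}\to0,
\]
where $S_\es$ is the strip. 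This follows from the strong convergence and the equi-integrability of a strongly convergent sequence. For the gradient and time derivative the strip terms are bounded in $L^2$ and have vanishing support, so they converge weakly to $0$. The three stated convergences follow.

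For $u_L|_{\Gamma_L}\equiv u_R|_{\Gamma_R}$, I use that $\Omega$ is a torus, so $\Gamma_L$ and $\Gamma_R$ are identified. The microscopic solution $u^\es\in L^2(0,T;H^1_\sharp(\Omega))$ therefore has equal traces there: $u_L^\es|_{\Gamma_L}=u_R^\es|_{\Gamma_R}$. Near $\Gamma_L$ and $\Gamma_R$ the domains $\oii$ coincide with $\Omega_i$, for instance on $(-\ell,-\ell/2)\times(0,h)$. The trace operator $H^1\to L^2(\Gamma)$ is weakly continuous, so $u_i^\es|_{\Gamma_i}\rightharpoonup u_i^0|_{\Gamma_i}$ weakly in $L^2((0,T)\times\Gamma_i)$, and the equality passes to the limit.

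I expect the main obstacle to be handling the moving domains rigorously, namely the extension and the vanishing of the strip contributions. The compactness itself is routine.
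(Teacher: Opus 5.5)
Your proof is correct and follows essentially the standard argument the paper relies on when it defers to Neuss-Radu and J\"ager: move $u_i^\es$ onto the fixed domain $\Omega_i$, combine the uniform energy bounds with Aubin--Lions compactness, and show that the $\es$-strip contributes nothing in the limit. The only difference is how the domain is fixed. You use an even reflection across $x_1=\mp\es/2$, whereas the paper's subsequent interface-convergence argument uses the affine stretching map $w$; your extension coincides with $u_i^\es$ on $\oii$ itself, which simplifies the comparison step. Your derivation of $u_L^0|_{\Gamma_L}=u_R^0|_{\Gamma_R}$ from the torus identification and the weak continuity of the trace is also sound.
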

    
\begin{proof}
    We refer to \cite[Proposition 2.1]{neuss2007effective} for the detailed proof.
\end{proof}

Now, by the following theorem, we establish a compactness result on the interface $L^\es$ and $R^\es$.
\begin{theorem}\label{theorem: convergence in the layer left}
     Assume $(A1)-(A2)$ and \eqref{eq:energy_estimate_1_case_1}--\eqref{eq:energy_estimate_3_case_1} hold. Then, on the interfaces connecting the bulk domain and the layer, up to a subsequence, we have the following convergence results
    \begin{multline*}
       \lim_{\es \rightarrow 0} \int_0^T\int_{L^\es} u_L^\es(t,x)\phi\left(t,\overline{x},\dfrac{t}{\es}, \dfrac{x}{\es},\dfrac{x_2}{\es^2}\right)\di{x}\di{t}\\
       =\mint u_L^0(t,\overline{x})\phi(t,\bar x,\tau,\tfrac {-1}2,y_2,z)\dzdy,
    \end{multline*}
    and 
    \begin{multline*}
       \lim_{\es \rightarrow 0} \int_0^T\int_{R^\es} u_R^\es(t,x)\phi\left(t,\overline{x},\dfrac{t}{\es}, \dfrac{x}{\es},\dfrac{x_2}{\es^2}\right)\di{x}\di{t},\\
       =\mint u_R^0(t,\overline{x})\phi(t,\bar x, \tau,\tfrac 12,y_2,z)\dzdy,
    \end{multline*}
    for any $\phi \in L^2((0,T)\times \Sigma;\mathcal{C}_{\sharp}(\overline{\T\times Y\times Z}))$.
\end{theorem}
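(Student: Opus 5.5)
The plan is to split the interface integral into two pieces. The first piece replaces the microscopic trace $u_L^\es|_{L^\es}$ by the trace of the limit $u_L^0$ on $\Sigma$. The second piece handles the oscillating test function through a standard averaging (Riemann–Lebesgue type) lemma for periodic functions on a one-dimensional line. I treat $L^\es$ in detail; $R^\es$ is symmetric.

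\emph{Step 1: translation to a fixed interface.} Parametrize $L^\es$ by $x_2\in(0,h)$, with $x=(-\es/2,x_2)$, so $x/\es=(-1/2,x_2/\es)$. Define the shifted function $w^\es(t,x_1,x_2):=u_L^\es(t,x_1-\es/2,x_2)$ on $(-\ell+\es/2,0)\times(0,h)$. Its trace at $x_1=0$ is exactly $u_L^\es$ on $L^\es$. To compare with $u_L^0|_\Sigma$, I use the one-dimensional fundamental theorem of calculus in $x_1$ on a fixed strip $(-\delta,0)\times(0,h)$. This gives the trace bound
\begin{align*}
\|g(0,\cdot)\|_{L^2(0,h)}^2\le C\left(\delta^{-1}\|g\|_{L^2((-\delta,0)\times(0,h))}^2+\|g\|_{L^2((-\delta,0)\times(0,h))}\|\partial_{x_1}g\|_{L^2((-\delta,0)\times(0,h))}\right),
\end{align*}
which I apply to $g=w^\es-u_L^0$. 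By \eqref{eq:energy_estimate_3_case_1}, $\partial_{x_1}g$ is bounded in $L^2$. By Theorem~\ref{theorem:convergence in the bulk}, $\mathds{1}_{\oll}u_L^\es\to u_L^0$ strongly in $L^2((0,T)\times\Omega_L)$. Continuity of translations in $L^2$ then gives $w^\es-u_L^0\to0$ in $L^2$ on the strip, up to a boundary-layer contribution of width $\es$, which is controlled by the same trace inequality and the $H^1$ bounds. Hence $u_L^\es(t,-\es/2,\cdot)\to u_L^0(t,0,\cdot)$ strongly in $L^2((0,T)\times(0,h))$.

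\emph{Step 2: splitting.} I write
\begin{align*}
\int_0^T\!\!\int_{L^\es}u_L^\es\phi^\es=\int_0^T\!\!\int_0^h\big(u_L^\es(t,-\tfrac{\es}{2},x_2)-u_L^0(t,\bar x)\big)\phi^\es\di{x_2}\di{t}+\int_0^T\!\!\int_0^h u_L^0(t,\bar x)\,\phi\big(t,\bar x,\tfrac t\es,-\tfrac12,\tfrac{x_2}{\es},\tfrac{x_2}{\es^2}\big)\di{x_2}\di{t}.
\end{align*}
The first term tends to zero by Step 1, because $\|\phi^\es\|_{L^2}\le\|\phi\|_{L^2((0,T)\times\Sigma;C)}$.

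\emph{Step 3: averaging.} For the second term I need
\begin{align*}
\psi\big(t,x_2,\tfrac t\es,\tfrac{x_2}{\es},\tfrac{x_2}{\es^2}\big)\rightharpoonup\int_{\T}\int_0^1\int_Z\psi(t,x_2,\tau,y_2,z)\di{z}\di{y_2}\di{\tau}
\end{align*}
weakly in $L^2((0,T)\times(0,h))$, where $\psi:=\phi(\cdot,-\tfrac12,\cdot)$. I would first prove this for $\psi$ a finite sum of products $a(t,x_2)\,b(\tau)\,c(y_2)\,d(z)$, with $a$ smooth and $b,c,d$ trigonometric polynomials. By Fourier expansion, each mode $e^{2\pi i(k t/\es+m x_2/\es+n x_2/\es^2)}$ with $(k,m,n)\neq0$ tends to zero weakly. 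The reason is that the frequency of the mode is a nonzero linear combination of $\es^{-1}$ and $\es^{-2}$, which blows up as $\es\to0$, so Riemann–Lebesgue applies. I then extend to general $\phi$ by density of such sums in $L^2((0,T)\times\Sigma;\mathcal C_\sharp)$, using the uniform bound $\|\phi^\es\|_{L^2}\le\|\phi\|$. Since $Y_L=\{-\tfrac12\}\times(0,1)$, integrating over $Y$ with $y_1=-\tfrac12$ fixed yields the stated limit, read with $y_1$ frozen at $-\tfrac12$ and the measure on the $y_2$-component.

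The main obstacle is Step 1. The difficulty is obtaining strong convergence of the traces on a moving interface from strong $L^2$ and weak $H^1$ convergence in the bulk. The $\delta$-strip trace inequality handles it: choose $\delta$ small, then let $\es\to0$.
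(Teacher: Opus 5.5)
Your proposal is correct and follows essentially the same route as the paper: transfer $u_L^\es$ to a fixed domain, and use the trace interpolation inequality (strong $L^2$ convergence plus bounded gradients) to get strong convergence of the traces on $\Sigma$. You then split off $u_L^0$ and pass to the limit in the oscillating test function. The differences are cosmetic: you use a translation and a strip trace bound where the paper uses the dilation $w(x)=((1-\frac{\es}{2\ell})x_1-\frac{\es}{2},x_2)$ on all of $\Omega_L$, and your Fourier/Riemann--Lebesgue density argument spells out the one-dimensional multiscale averaging step that the paper uses without proof in its final equality.
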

\begin{proof}
 The proof is analogous to the proof of \cite[Proposition 2.1]{neuss2007effective}. 
We define the map $w:\Omega_L\rightarrow\oll$ such that $w(x):=\left((1-\frac{\es}{2\ell})x_1-\frac{\es}{2},x_2\right)$, it follows that $w(-\ell,x_2)=(-\ell,x_2)$ and $w(0,x_2)=(-\frac{\es}{2},x_2).$  Define $\widetilde{u}_L^\es(t,x):={u}_L^\es(t,w(x))$. From the energy estimate and the \Cref{theorem:convergence in the bulk}, we have
\begin{align}\label{eq:strong_convergence_of_ultilde}
    \widetilde{u}_L^\es\rightarrow u_L^0 \qquad\mbox{strongly in } \quad L^2((0,T);L^2(\Omega_L)),
\end{align}
and  $\nabla \widetilde{u}_L^\es$ uniformly bounded in $L^2((0,T);L^2(\Omega_L)).$
Using the trace interpolation inequality, we have 
\begin{align*}
    \|\widetilde{u}_L^\es-u_L^0\|_{L^2((0,T);L^2(\Sigma))}^2\leq C\|\widetilde{u}_L^\es-u_L^0\|_{L^2((0,T);L^2(\Omega_L))}\|\widetilde{u}_L^\es-u_L^0\|_{L^2((0,T);H^1(\mol))}.
\end{align*}
Applying \eqref{eq:strong_convergence_of_ultilde} on the above inequality we arrive at
\begin{align*}
    \widetilde{u}_L^\es\rightarrow u_L^0 \qquad\mbox{strongly in } \quad L^2((0,T);L^2(\Sigma)).
\end{align*}
Furthermore, using the definition of $\widetilde{u}_L^\es$, we get
\begin{align*}
    \widetilde{u}_L^\es\left(t,\frac{-\es}{2},x_2\right)\rightarrow u_L^0 (t,0,x_2)\qquad\mbox{strongly in } \quad L^2((0,T);L^2((0,h))).
\end{align*}
That leads to
\begin{align*}
     &\!\!\!\!\lim_{\es \rightarrow 0} \int_0^T\int_{L^\es} u_L^\es(t,x)\phi\left(t,\overline{x},\dfrac{t}{\es}, \dfrac{x}{\es},\dfrac{x_2}{\es^2}\right)\di{x}\di{t}\\
     &= \lim_{\es \rightarrow 0} \int_0^T\int_{0}^h u_L^\es(t,-\frac{\es}{2},x_2)\phi\left(t,\bar x,\dfrac{t}{\es},-\frac{1}{2}, \dfrac{x_2}{\es},\dfrac{x_2}{\es^2}\right)\di{x}\di{t}\\
      &=\lim_{\es \rightarrow 0} \int_0^T\int_{0}^h \Big[u_L^\es\Big(t,-\frac{\es}{2},x_2\Big)-u_L^0(t,0,x_2)\Big]\phi\left(t,\bar x,\dfrac{t}{\es},-\frac{1}{2}, \dfrac{x_2}{\es},\dfrac{x_2}{\es^2}\right)\di{x}\di{t}\\
      &\qquad+\lim_{\es \rightarrow 0} \int_0^T\int_{0}^h u_L^0(t,0,x_2)\phi\left(t,\bar x,\dfrac{t}{\es},-\frac{1}{2}, \dfrac{x_2}{\es},\dfrac{x_2}{\es^2}\right)\di{x}\di{t}\\
      &=\mint u_L^0(t,\overline{x})\phi(t,\bar x,\tau,\tfrac {-1}2,y_2,z)\dzdy.
\end{align*}

\end{proof}
\subsection{Convergence in the layer}

We now turn to establish the compactness result in the layer. By using the energy estimates and the concept of multiscale convergence for thin layer, we show that the function $u_M^\es$ and its spatial and temporal derivatives converge to  the corresponding limit functions that capture the macroscopic behavior as well as the oscillations associated with the different microscopic scales.
   \begin{theorem}\label{theorem:convergence_in_the_layer}
    Assume $(A1)-(A2)$ and \eqref{eq:energy_estimate_1_case_1}--\eqref{eq:energy_estimate_3_case_1} hold. Then there exist \\ $u_M^0\in L^2((0,T);H^1_{\sharp}(\Sigma))\cap H^1((0,T);L^2(\Sigma))$, $u_M^1\in L^2((0,T)\times \Sigma;\mathcal{H}_{\sharp,0}(\T\times Y))$ and $u_M^2\in L^2((0,T)\times \Sigma \times \T\times Y;H^1_{\sharp,0}(Z) )$ such that up to a subsequence
    \begin{subequations}
    \begin{align}
        u_M^\es(t,x)&\longrightarrow u_M^0(t,\overline{x})\label{eq:multiscale_convergence_for_u}\\
        \nabla u_M^\es(t,x)&\longrightarrow \nabla_{\overline{x}}u_M^0(t,\overline{x})+\nabla_y u_M^1(t,\overline{x},\tau,y)+\frac{d}{dz}u_M^2(t,\overline{x},\tau,y,z)e_2\label{eq:multiscale_convergence_for_gradient_term} \\
        \dfrac{\partial u_M^\es(t,x)}{\partial t}&\longrightarrow  \dfrac{\partial u_M^0(t,x)}{\partial t}+ \dfrac{\partial u_M^1(t,\overline{x},\tau,y)}{\partial \tau}\label{eq:multiscale_convergence_for_time_derivative_term}
    \end{align}
    \end{subequations}
    where the convergences are in the sense of multi-scale convergence for the thin layer.
    \end{theorem}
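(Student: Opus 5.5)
By the energy bounds \eqref{eq:energy_estimate_1_case_1}--\eqref{eq:energy_estimate_3_case_1}, the sequences $u_M^\es$, $\nabla u_M^\es$ and $\partial_t u_M^\es$ all satisfy \eqref{eq:l2bound_on_v_epsilon}. Theorem~\ref{theorem:compactness_result} therefore yields, along a common subsequence, multiscale limits
\[
u_M^\es\to U^0,\qquad \nabla u_M^\es\to \xi^0,\qquad \partial_t u_M^\es\to \eta^0 .
\]
The proof then identifies $U^0,\xi^0,\eta^0$ step by step. The idea is to take test functions with a suitable power of $\es$ in front and integrate by parts.

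\textbf{Step 1: $U^0$ depends only on $(t,\bar x)$.} Take $\es^2\,\mathrm{div}_x\Phi$ with $\Phi=\Phi(t,\bar x,\tau,y,z)$ smooth, periodic in $(\tau,y_2,z)$ and compactly supported in $y_1\in(-\tfrac12,\tfrac12)$, so there are no lateral boundary terms. The chain rule gives $\mathrm{div}_x\Phi=\es^{-1}\mathrm{div}_y\Phi+\es^{-2}\partial_z\Phi_2+\dots$, so the $\partial_z$ part dominates and we obtain $\partial_z U^0=0$. Repeating at order $\es$ with $\Phi$ independent of $z$ gives $\nabla_y U^0=0$. Repeating with $\es\,\partial_t$ of a test function gives $\partial_\tau U^0=0$. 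Hence $U^0=u_M^0(t,\bar x)$.

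Regularity of $u_M^0$ comes next. Test $\es^{-1}\int\!\!\int \nabla u_M^\es\cdot\Phi$ with $\Phi=\varphi(t,\bar x)e_2$ and $\partial_t u_M^\es$ with $\varphi(t,\bar x)$. The weighted $L^2$ bounds then show that $\partial_{x_2}u_M^0$ and $\partial_t u_M^0$ are the $(\tau,y,z)$-averages of $\xi^0\cdot e_2$ and $\eta^0$. In particular they lie in $L^2$, so $u_M^0\in L^2(0,T;H^1_\sharp(\Sigma))\cap H^1(0,T;L^2(\Sigma))$.

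\textbf{Step 2: structure of $\xi^0,\eta^0$.} This is the main obstacle, since there are two nested fast spatial scales plus a fast time scale. I would proceed as in Allaire--Briane reiterated homogenization.

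First take test vectors $\Phi$ with $\mathrm{div}_z$-type constraint $\partial_z\Phi_2=0$, with $\mathrm{div}_y(\int_Z\Phi\,dz)=0$, and compactly supported in $y_1$. Integrating by parts, the surviving terms show that $\xi^0-\nabla_{\bar x}u_M^0$ is orthogonal to all such fields. The standard Helmholtz-type argument then gives
\[
\xi^0-\nabla_{\bar x}u_M^0=\nabla_y u^1+\partial_z u^2\,e_2 .
\]
Here the $z$-direction only sees the component $e_2$, because $x_2/\es^2$ oscillates only in $x_2$.

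Since the $y_1$-test functions are compactly supported, $y_1$-periodicity is not imposed. This is consistent with $u_M^1\in\mathcal H_\sharp$, which is periodic only in $\tau$ and $y_2$. For the time derivative, test both $\partial_t u_M^\es$ and $\nabla u_M^\es$ with combinations such as $\es\,\partial_\tau\psi\,e_j$ against $\partial_{y_j}\psi$. This identifies the $\tau$-oscillating part of $\eta^0$ as $\partial_\tau u^1$, with the same $u^1$. I expect the consistency in $\tau$ to be the most delicate point: one must show that $u^1$ has $H^1$ regularity jointly in $(\tau,y)$, which uses the estimates on both $\partial_t$ and $\nabla$ simultaneously.

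\textbf{Step 3: normalization.} Subtracting the $(\tau,y)$-mean of $u^1$ and the $z$-mean of $u^2$ does not change the limits. This gives $u_M^1\in L^2((0,T)\times\Sigma;\mathcal H_{\sharp,0}(\T\times Y))$ and $u_M^2\in L^2((0,T)\times\Sigma\times\T\times Y;H^1_{\sharp,0}(Z))$, which completes the identification.
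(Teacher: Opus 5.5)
Your Step 1, the spatial half of Step 2, and Step 3 follow the paper. You use the same compactness theorem, the same $\es$-scaled integrations by parts to remove the $z$, $y$, $\tau$ dependence from the limit of $u_M^\es$, and an Allaire--Briane orthogonality argument. Your explicit identities $\partial_{x_2}u_M^0=\int\xi^0\cdot e_2$ and $\partial_t u_M^0=\int\eta^0$ are a useful addition that the paper leaves implicit. One caution: the term $\es^{-2}\int_0^T\!\int_{\omm}u_M^\es(\mathrm{div}_y\Phi)^\es$ that ``survives'' your integration by parts is naively $O(\es^{-1})$. It vanishes only through the zero-$z$-mean double-primitive argument of Lemma~\ref{lemma:h-1bound}, not through the $L^2$ bounds.

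The genuine gap is the time part of Step 2, exactly where you anticipated trouble. A Helmholtz argument at fixed $\tau$ determines $u^1$ only up to an additive function $c(t,\bar x,\tau)$, with no control on $\partial_\tau c$. For example, normalizing $\int_Y u^1\di{y}=0$ for each $\tau$ generally gives $\eta^0\neq\partial_t u_M^0+\partial_\tau u^1$, because $\int_Y\eta^0\di{y}$ can oscillate in $\tau$. So ``with the same $u^1$'' is asserted, not proved. Your cross-testing, made precise, is the null-Lagrangian identity $\int_0^T\!\int_{\omm}(\partial_t u_M^\es\,\partial_{x_j}v-\partial_{x_j}u_M^\es\,\partial_t v)=0$ with $v=\es\psi(t,\bar x,t/\es,x/\es)$, where $\psi$ vanishes near $y_1=\pm\tfrac12$. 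In the limit it gives only $\partial_{y_j}\bar\eta^0=\partial_\tau\bar\xi^0_j$ for the $z$-averages, i.e.\ $\nabla_y(\bar\eta^0-\partial_t u_M^0-\partial_\tau u^1)=0$. To finish, you still need two steps:
(a) show that $\eta^0$ is independent of $z$; you do not address this, yet the statement requires it.
(b) Re-gauge $u^1$. The function $g:=\bar\eta^0-\partial_t u_M^0-\partial_\tau u^1$ depends only on $(t,\bar x,\tau)$ and has zero $\tau$-mean by your mean identity. Adding a $\tau$-primitive of $g$ to $u^1$ therefore keeps $\tau$-periodicity and gives $u^1\in H^1(\T\times Y)$ with $\partial_\tau u^1=\eta^0-\partial_t u_M^0$.

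The paper avoids both (a) and (b) by treating $(t,x)$ as a single space-time variable. It tests $\es^{-1}\int_0^T\!\int_{\omm}(\partial_t u_M^\es,\nabla u_M^\es)\cdot\Phi^\es$ with $\Phi=(\phi_1,\phi_2,\phi_3)$ satisfying $\partial_z\phi_3=0$ and $\partial_\tau\int_Z\phi_1\di{z}+\mathrm{div}_y\int_Z(\phi_2,\phi_3)\di{z}=0$. The resulting $\es^{-2}\nabla_{\tau,y}\cdot\Phi$ term is handled by Lemma~\ref{lemma:h-1bound}, and the Helmholtz lemma is applied once on $\T\times Y\times Z$. This gives $(\eta^0,\xi^0)-(\partial_t u_M^0,\nabla_{\bar x}u_M^0)=\nabla_{\tau,y}u^1+(0,0,\partial_z u^2)$ in one stroke. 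Joint $H^1(\T\times Y)$ regularity of a single $u^1$ then comes for free. So does the $z$-independence of $\eta^0$ and $\xi^0\cdot e_1$: take $\Phi=(\phi_1,0,0)$ or $(0,\phi_2,0)$ with zero $z$-mean. Your remark that $\phi_2$ must vanish at $y_1=\pm\tfrac12$, to avoid boundary terms on $L^\es\cup R^\es$, applies to the paper's argument as well.
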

    \begin{proof}
           From the uniform energy estimates and the Theorem \ref{theorem:compactness_result}, there exist $u_M^0,\xi_0,\xi_1\in L^2((0,T)\times \Sigma\times \T\times Y\times Z)$ such that
        \begin{align*}
            u_M^\es(t,x)&\longrightarrow u_M^0(t,\overline{x},\tau,y,z),\\
        \nabla u_M^\es(t,x)&\longrightarrow \xi_0(t,\overline{x},\tau,y,z),
        \end{align*}
        and
        \begin{align*}
        \dfrac{\partial u_M^\es}{\partial t}&\longrightarrow  \xi_1(t,\overline{x},\tau,y,z),
        \end{align*}
        in the sense of multi-scale convergence for the thin layer. We first show that the function $u_M^0$ is independent of the variables $\tau,y$ and $z$. 
        
        Let $\Phi_1\in C_c^\infty((0,T)\times\Sigma;C_{\sharp}^\infty(Z))^2$, then by using the integration by parts with respect to the space variable, we get
        \begin{align*}
            0&=\lim_{\es\rightarrow 0}\es\int_0^T\int_{\omm}\nabla u_M^\es(t,x)\cdot\Phi_1(t,\overline{x},\dfrac{x_2}{\es^2})\di{x}\di{t}\\
            &=-\lim_{\es\rightarrow 0}\es\int_0^T\int_{\omm} u_m^\es(t,x)\nabla_x\cdot \Phi_1(t,\overline{x},\dfrac{x_2}{\es^2})\di{x}\di{t}-\lim_{\es\rightarrow 0}\frac{1}{\es}\int_0^T\int_{\omm} u_m^\es(t,x)\nabla_z\cdot \Phi_1(t,\overline{x},\dfrac{x_2}{\es^2})\di{x}\di{t}\\
            &=-\mint u_M^0(t,\tau,x,y,z)\nabla_z\cdot \Phi_1(t,\overline{x},z)\di{z}\di{y}\di{\tau}\di{x}\di{t},
        \end{align*}
        which shows that $u_M^0$ is independent of $z$.  Similarly, let $\Phi_2\in  C_c^\infty((0,T)\times\Sigma;C_{\sharp}^\infty(Y))^2$ and $\phi_3\in  C_c^\infty((0,T)\times\Sigma;C_{\sharp}^\infty(\T))$, we get
        \begin{align*}
            0&=\lim_{\es\rightarrow 0}\int_0^T\int_{\omm}\nabla u_m^\es(t,x)\cdot \Phi_2(t,\overline{x},\dfrac{x}{\es})\di{x}\di{t}\\
            &=-\lim_{\es\rightarrow 0}\es\frac{1}{\es}\int_0^T\int_{\omm} u_m^\es(t,x)\nabla_x\cdot \Phi_2(t,\overline{x},\dfrac{x}{\es})\di{x}\di{t}-\lim_{\es\rightarrow 0}\frac{1}{\es}\int_0^T\int_{\omm} u_m^\es(t,x)\nabla_y\cdot\Phi_2(t,\overline{x},\dfrac{x}{\es})\di{x}\di{t}\\
            &=-\int_0^T\int_{\Sigma}\int_\T\int_{Y}u_M^0(t,\tau,x,y)\nabla_y\cdot\Phi_2(t,\overline{x},y)\di{y}\di{\tau}\di{x}\di{t},
        \end{align*}
        and 
        \begin{align*}
            0&=\lim_{\es\rightarrow 0}\int_0^T\int_{\omm}\dfrac{\partial u_M^\es}{\partial t}(t,x)\phi_3(t,\overline{x},\dfrac{t}{\es})\di{x}\di{t}\\
            &=-\lim_{\es\rightarrow 0}\es\int_0^T\int_{\omm} u_m^\es(t,x)\partial_t\phi_3(t,\overline{x},\dfrac{t}{\es})\di{x}\di{t}-\lim_{\es\rightarrow 0}\frac{1}{\es}\int_0^T\int_{\omm} u_m^\es(t,x)\partial_\tau \phi_3(t,\overline{x},\dfrac{t}{\es})\di{x}\di{t}\\
            &=-\int_0^T\int_{\Sigma}\int_\T u_M^0(t,\tau,x)\partial_\tau \phi_3(t,\overline{x},\tau)\di{\tau}\di{x}\di{t},
        \end{align*}
        that shows $u_M^0$ is independent of $y$ and $\tau$.

        Now, we derive the multiscale convergence limit of $\frac{\p u_M^\es}{\p t}$ and $\nabla u_M^\es$ as $\es\rightarrow 0$.
        Let $\Phi:=(\phi_1,\phi_2,\phi_3)\in C_c^\infty ((0,T)\times \Sigma;C_{\sharp}(\T\times Y\times Z))^3$ and  $\Phi_1:=(\phi_2,\phi_3)$, we define 
        \begin{align*}
            \nabla_{t,\bar x}\cdot\Phi&:=\frac{\p}{\p t}\phi_1+\nabla_{\bar x}\cdot \Phi_1,\\
            \nabla_{\tau,y}\cdot\Phi&:=\frac{\p}{\p \tau}\phi_1+\nabla_{y}\cdot \Phi_1.
        \end{align*}
        Assume that the function $\Phi$ satisfies 
        \begin{align}\label{eq:derivative_of_phi3}
            \frac{d}{dz}\phi_3=0 \quad\mbox{and }\quad \nabla_{\tau,y}\cdot\int_Z\Phi\di{z}=0.
        \end{align} 
        Now, using integration by parts, we have
        \begin{align}\label{eq:nablatx_id}
            \frac{1}{\es}\int_0^T\int_{\omm}\nabla_{t,x} u_M^\es\cdot \Phi\di{x}\di{t}=-\frac{1}{\es}\Mint u_M^\es\Big[\nabla_{t,\bar x}\cdot\Phi+\frac{1}{\es}\nabla_{\tau,y}\cdot\Phi+\frac{1}{\es^2}\nabla_z\cdot \Phi_1\Big]\di{x}\di{t}.        \end{align}
            From \eqref{eq:derivative_of_phi3}, we have $\nabla_z\cdot \Phi_1=0$, using Lemma \ref{lemma:h-1bound} together with \eqref{eq:derivative_of_phi3}, we obtain
            \begin{align*}
                \Bigl\|\frac{1}{\es^2}\nabla_{\tau,y}\cdot\Phi\Bigr\|_{L^2(0,T;H^{1}(\omm)^*)}\leq C.
            \end{align*}
           Since we have the uniform energy  bound
           \begin{align*}
               \dfrac{1}{\es}\| u_M^\es\|_{L^2(0,T;H^1(\omm))}^2\leq C,
           \end{align*}
           we obtain
            \begin{align*}
                \lim_{\es\rightarrow 0}\es\frac{1}{\es}\Mint u_M^\es \frac{1}{\es^2}\nabla_{\tau,y}\cdot \Phi \di{x}\di{y}=0.
            \end{align*}

        Using the preceding identity and  taking the limit $\es\rightarrow 0$ in \eqref{eq:nablatx_id} yields
            \begin{align*}
                \mint(\xi_0,\xi_1)\cdot\Phi\dzdy=\mint u_M^0 \nabla_{t,\bar x}\cdot \Phi\dzdy.
            \end{align*}
           Performing integration by parts with respect to the space and time variables, we obtain

        \begin{align*}
            \mint[(\xi_0,\xi_1)-\nabla_{t,\bar x}u_M^0]\cdot\Phi\dzdy=0.
        \end{align*}
        Since the function $\Phi$ satisfies the conditions \eqref{eq:derivative_of_phi3}, using the above identity together with \cite[Lemma 3.7]{allaire1996multiscale}, we get that there exists $u_M^1\in L^2((0,T)\times \Sigma;H^1_{\sharp}(\T\times Y))$ and $u_M^2\in L^2((0,T)\times \Sigma \times \T\times Y;H^1_{\sharp}(Z))$ such that
        \begin{align*}
            (\xi_0,\xi_1)-\nabla_{t,\bar x}u_M^0=\nabla_{\tau,y}u_M^1+[0,\nabla_z]^Tu_M^2.
        \end{align*}
        By rearranging, we get
        \begin{align*}
            \xi_0(t,\bar x,\tau,y,z)&=\nabla_{\bar x}u_M^0(t,\bar x)+\nabla_{y}u_M^1(t,\bar x,\tau, y)+\frac{d}{dz}u_M^2(t,\bar x,\tau, y,z)e_2\\
            \xi_1(t,\bar x,\tau,y,z)&=\frac{\p}{\p t}u_M^0(t,\bar x)+\frac{\p}{\p \tau}u_M^1(t,\bar x,\tau, y).
        \end{align*}
    \end{proof}
    \section{Upscaling of the microscopic model}\label{section:Upscaling of the microscopic model}
    This section is devoted to deriving the effective model and the interface conditions. Relying on the compactness results presented in the \Cref{section:Uniform energy estimates and compactness results}, and using  well-constructed test functions in the microscopic weak formulation, we pass to the limit $\es\rightarrow0$ and identify the homogenized limit equation. 
   Furthermore, under certain additional assumptions on the effective coefficients, such as positivity and boundedness, we obtain the uniqueness result for the effective model.
    
    \subsection{Derivation of the effective model}
    In the following lemma, we  derive the first effective interface condition. We show that the limits $u_L^0$ and $u_R^0$ that are defined in the bulk domains, and the limit $u_M^0$ on the interface $\Sigma$ coincide on $\Sigma$. In other words, we get the continuity of the solution along the interface $\Sigma$.

    \begin{lemma}\label{lemma:first_transmission_condition}
        Let $u_L^0,u_R^0$ and $u_M^0$ be the limit functions from  \Cref{theorem:convergence in the bulk} and \Cref{theorem:convergence_in_the_layer}. Then, it holds that
        \begin{align*}
            {u_R^0=u_L^0}&{=u_M^0}\qquad &\mbox{for a.e. }(t,\bar x)\in (0,T)\times\Sigma.
        \end{align*}
    \end{lemma}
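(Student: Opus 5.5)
The plan is to compare traces on the two interfaces $L^\es$ and $R^\es$ with the layer's multiscale limit, using the fact that $u^\es\in H^1$ across $L^\es$ and $R^\es$, so that $u_L^\es=u_M^\es$ on $L^\es$ and $u_R^\es=u_M^\es$ on $R^\es$. We only need test functions depending on $(t,\bar x)$. Fix $\psi\in C_c^\infty((0,T)\times\Sigma)$, viewed as a function of $(t,x_2)$ that is constant in $x_1$.

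\textbf{Step 1.} For $x_1\in(-\tfrac{\es}{2},\tfrac{\es}{2})$, the fundamental theorem of calculus in $x_1$ gives
\begin{align*}
u_M^\es(t,x_1,x_2)-u_M^\es\left(t,-\tfrac{\es}{2},x_2\right)=\int_{-\es/2}^{x_1}\partial_{x_1}u_M^\es(t,s,x_2)\di{s}.
\end{align*}
Multiply by $\psi$, integrate over $(0,T)\times\omm$, and divide by $\es$. Cauchy--Schwarz then bounds the right-hand side by
\begin{align*}
\frac{1}{\es}\cdot\es\,\sqrt{\es}\,\|\nabla u_M^\es\|_{L^2((0,T)\times\omm)}\|\psi\|_{L^2}\le C\es,
\end{align*}
where the last inequality uses \eqref{eq:energy_estimate_3_case_1}, i.e. $\|\nabla u_M^\es\|_{L^2}\le C\sqrt{\es}$. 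Hence
\begin{align*}
\frac1\es\int_0^T\int_{\omm}u_M^\es\psi\di{x}\di{t}-\int_0^T\int_{L^\es}u_M^\es\psi\di{x}\di{t}\longrightarrow 0.
\end{align*}

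\textbf{Step 2.} Pass to the limit in each term.
\begin{itemize}
\item[(a)] The first term converges to $\int_0^T\int_\Sigma u_M^0\psi$ by \eqref{eq:multiscale_convergence_for_u}, since $|\T\times Y\times Z|=1$ and $u_M^0$ is independent of the fast variables.
\item[(b)] On $L^\es$ we replace $u_M^\es$ by $u_L^\es$ via the transmission condition. \Cref{theorem: convergence in the layer left}, with $\phi=\psi$ independent of the fast variables, gives the limit $\int_0^T\int_\Sigma u_L^0\psi$.
\end{itemize}
Since $\psi$ is arbitrary, $u_M^0=u_L^0$ a.e. on $(0,T)\times\Sigma$. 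The identical argument, integrating from $x_1$ up to $\tfrac{\es}{2}$ and using the $R^\es$ statement, gives $u_M^0=u_R^0$.

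\textbf{Main obstacle.} The delicate point is the trace/FTC step. It requires enough regularity to apply the fundamental theorem of calculus along $x_1$ for a.e. $x_2$, which holds by density since $u_M^\es(t)\in H^1(\omm)$. It also uses the identification of the traces of $u_M^\es$ and $u_L^\es$ on $L^\es$, which follows from $u^\es\in H^1_\sharp(\Omega)$. Beyond that, the argument relies only on the scaling $\|\nabla u_M^\es\|\lesssim\sqrt\es$ to make the thickness error vanish.
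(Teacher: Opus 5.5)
Your proof is correct, but it takes a different route from the paper's.

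\textbf{The paper's route.} The paper works at the cell level with an oscillating vector test field $\Phi(t,\bar x,x/\es)$. This field is supported in $(0,T)\times\Sigma\times(Y\cup Y_L)$, so it vanishes near $R^\es$.
\begin{itemize}
\item[(1)] It first notes that $\int_0^T\int_{\omm}\nabla u_M^\es\cdot\Phi\,dx\,dt\to 0$, since this is $\es$ times a multiscale-convergent quantity.
\item[(2)] Integrating by parts in $x$ yields a layer term $\frac1\es\int u_M^\es\,\nabla_y\cdot\Phi$ and a boundary term on $L^\es$. On $L^\es$ it replaces $u_M^\es$ by $u_L^\es$.
\item[(3)] It passes to the limit using the multiscale convergence of $u_M^\es$ and the interface convergence theorem, applied with the oscillating trace $\Phi(t,\bar x,-\tfrac12,x_2/\es)$.
\item[(4)] It integrates by parts back in $y$, using that $u_M^0$ is independent of $y$, to obtain $\int\!\int_{Y_L}(u_M^0-u_L^0)\Phi_1=0$. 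A density argument finishes the proof.
\end{itemize}

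\textbf{Your route.} You use a non-oscillating test function $\psi(t,\bar x)$ and the fundamental theorem of calculus along $x_1$ across the layer. This directly compares the normalized layer average $\frac1\es\int_0^T\int_{\omm}u_M^\es\psi$ with the trace integral on $L^\es$. The scaled gradient bound $\|\nabla u_M^\es\|_{L^2}\le C\sqrt\es$ then gives an explicit $O(\es)$ error.

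\textbf{Comparison.} Both arguments rest on the same two ingredients: the gradient bound (in the paper it enters only to kill the $\es\cdot\frac1\es\int\nabla u_M^\es\cdot\Phi$ term) and the interface convergence theorem. Your version is more elementary and quantitative. It avoids oscillating test functions, the second integration by parts in $y$, and the density step. The paper's version stays inside the multiscale test-function framework used throughout, which is the form that carries over to trace identification when the layer limit is not a priori constant in the cell variables.
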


     The proof follows an argument similar to that in \cite[Theorem 5.3]{effectiveapratim} with minor modifications. For the reader’s convenience, we give the details in the Appendix.

   In the next lemma, using the weak convergence results in the bulk domain and the multiscale convergence results in the layer, we determine the homogenization limit and the second effective transmission condition.
\begin{lemma}\label{lemma:derivation_of_uL_n_uR}
     Assume $(A1)-(A2)$ and \eqref{eq:energy_estimate_1_case_1}--\eqref{eq:energy_estimate_3_case_1} hold. Then, in the distributional sense, the functions $u_L^0,u_R^0,u_M^0,u_M^1$ and $u_M^2$ from \Cref{theorem:convergence in the bulk} and \Cref{theorem:convergence_in_the_layer} satisfy
 \begin{subequations}  
\begin{align}
 \dfrac{\partial}{\partial t}\left( {\rho_L}\dfrac{\partial u_L^0}{\partial t}\right)-\mathrm{div}(D_L \nabla u_L^0)&=f_L\qquad &\mbox{in}\qquad (0,T)\times \Omega_L\label{eq:macro_ul}\\
  \dfrac{\partial}{\partial t}\left({\rho_R} \dfrac{\partial u_R^0}{\partial t}\right)-\mathrm{div}(D_R \nabla u_R^0)&=0\qquad &\mbox{in}\qquad (0,T)\times\Omega_R
\end{align}
\begin{multline}\label{eq:transmition_condition_2}
     {(D_R \nabla u_R^0)\cdot n-(D_L \nabla u_L^0)\cdot n}\\
     =\dfrac{\p}{\p t}\int_\T\int_{Y}\int_Z\left(\rho_M(t,\overline{x},\tau,y,z)\left(\dfrac{\partial u_M^0(t,x)}{\partial t}+ \dfrac{\partial u_M^1(t,\overline{x},\tau,y)}{\partial \tau}\right)\right)\di{z}\di{y}\di{\tau}\\
     -\mathrm{div}_{\bar x}\int_\T\int_{Y}\int_ZD_M(t,\overline{x},\tau,y,z) \left( \nabla_{\overline{x}}u_M^0(t,\overline{x})+\nabla_y u_M^1(t,\overline{x},\tau,y)+\nabla_zu_M^2(t,\overline{x},\tau,y,z)\right)\di{z}\di{y}\di{\tau}\\
      \qquad \mbox{on}\qquad(0,T)\times\Sigma.
\end{multline}

\end{subequations}
\end{lemma}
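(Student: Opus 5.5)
We pass to the limit in the weak formulation \eqref{eq:weakform} using test functions that depend only on macroscopic variables. Take $\phi\in C^\infty_c([0,T)\times\Omega)$, periodic in $x_2$ and with $\phi(T)=0$, and use it directly as the test function.

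\textbf{Bulk terms.} By \Cref{theorem:convergence in the bulk}, the weak convergences of $\mathds{1}_{\oii}\partial_t u_i^\es$ and $\mathds{1}_{\oii}\nabla u_i^\es$ give
\begin{align*}
&\rho_i\int_0^T\int_{\oii}\partial_t u_i^\es\,\partial_t\phi\di{x}\di{t}\to\rho_i\int_0^T\int_{\Omega_i}\partial_t u_i^0\,\partial_t\phi\di{x}\di{t},\\
&D_i\int_0^T\int_{\oii}\nabla u_i^\es\cdot\nabla\phi\di{x}\di{t}\to D_i\int_0^T\int_{\Omega_i}\nabla u_i^0\cdot\nabla\phi\di{x}\di{t},
\end{align*}
for $i\in\{L,R\}$. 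The right-hand side converges trivially by (A2).

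\textbf{Layer terms.} The layer terms carry the factor $\frac1\es$, which is exactly the normalization in Definition \ref{def:multiscale_convergence}. Because $\phi$ is smooth, $\phi(t,x)=\phi(t,\bar x)+O(\es)$ on $\omm$, and the same holds for its derivatives. The errors are controlled by the energy bound $\es^{-1/2}\|\nabla_{t,x}u_M^\es\|_{L^2}\le C$ together with Cauchy--Schwarz: since $|\omm|=O(\es)$, each error is $O(\es)$.

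We then take the test functions $\rho_M(t,\bar x,\tau,y,z)\partial_t\phi(t,\bar x)$ and $D_M(\cdot)\nabla_{\bar x}\phi(t,\bar x)$ in Theorem \ref{theorem:convergence_in_the_layer}. These are admissible (or can be approximated by admissible ones) because $\rho_M,D_M\in W^{1,\infty}$ and are periodic. The two layer terms then converge to
\begin{align*}
-\mint\rho_M\big(\partial_tu_M^0+\partial_\tau u_M^1\big)\partial_t\phi\dzdy+\mint D_M\big(\nabla_{\bar x}u_M^0+\nabla_yu_M^1+\partial_z u_M^2e_2\big)\cdot\nabla_{\bar x}\phi\dzdy .
\end{align*}
The initial term $\int\rho_Lv_2\phi(0)$ remains.

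\textbf{Reading off the equations.} We identify the limit problem by localization. First choose $\phi$ supported in $\Omega_L$ (respectively $\Omega_R$), compactly in time. Only the bulk terms survive, which gives \eqref{eq:macro_ul} and its right-domain analogue in $\mathcal D'$. Next take general $\phi$, integrate the bulk terms by parts in $x$ (formally, or in the distributional/$H^{-1/2}$ trace sense, using the bulk equations just obtained), and use $\phi|_{\Sigma}$. The normal fluxes appear with outward normals, so the bulk sides give $\int_0^T\int_\Sigma\big[(D_L\nabla u_L^0)\cdot n-(D_R\nabla u_R^0)\cdot n\big]\phi$. Combining this with the layer limit, and reading the layer terms as the distributions $\partial_t(\cdots)$ and $\mathrm{div}_{\bar x}(\cdots)$ acting on $\phi|_\Sigma$, yields \eqref{eq:transmition_condition_2} in the distributional sense on $(0,T)\times\Sigma$.

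\textbf{Expected obstacle.} The delicate point is justifying the limit of the layer terms with test functions of the form $\rho_M(t,\bar x,t/\es,x/\es,x_2/\es^2)\,\psi(t,\bar x)$. Definition \ref{def:multiscale_convergence} requires continuity in $(\tau,y,z)$, whereas $\rho_M,D_M$ are only $W^{1,\infty}$. For the two-dimensional cell $\T\times Y\times Z$ of dimension 4, $W^{1,\infty}$ gives Lipschitz and hence continuous functions, so they are admissible. One still has to check the $t,\bar x$ dependence: an $L^2((0,T)\times\Sigma;\mathcal C_\sharp)$ function is fine. The residual $x_1$-dependence of $\phi$ across the layer must also be removed via the $O(\es)$ Taylor argument above. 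A secondary subtlety is that the trace terms on $L^\es,R^\es$ never appear explicitly, because $\phi$ is globally $H^1$. This is why the continuity from Lemma \ref{lemma:first_transmission_condition} is not needed here.
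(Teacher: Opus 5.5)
Your route is the same as the paper's. You test \eqref{eq:weakform} with one macroscopic $\phi$, pass to the limit in the bulk via \Cref{theorem:convergence in the bulk} and in the layer via \Cref{theorem:convergence_in_the_layer} after an $O(\es)$ freezing of $\phi$ at $x_1=0$, and then localize. Your error estimates for the freezing are fine.

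The gap is in how you identify the limit of the layer diffusion term. Freezing gives $\nabla\phi(t,x)=\nabla\phi(t,0,x_2)+O(\es)$ with the \emph{full} gradient. The correct limit, which is what the paper's identity contains with $\nabla\phi_0$, is
\[
\mint D_M\Bigl(\nabla_{\bar x}u_M^0+\nabla_yu_M^1+\tfrac{d}{dz}u_M^2\,e_2\Bigr)\cdot\nabla\phi(t,0,x_2)\dzdy .
\]
This differs from your expression with $\nabla_{\bar x}\phi=(0,\p_{x_2}\phi)^{\mathrm{T}}$ by
\[
\int_0^T\int_\Sigma \bar q_1\,\p_{x_1}\phi(t,0,x_2)\di{\bar x}\di{t},\qquad \bar q_1:=\int_\T\int_Y\Bigl(\int_Z D_M\di{z}\Bigr)\p_{y_1}u_M^1\di{y}\di{\tau}.
\]
This term is of order one, and it is not a functional of $\phi|_\Sigma$. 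Nothing in \Cref{theorem:convergence_in_the_layer} makes it vanish: $u_M^1$ is periodic only in $\tau$ and $y_2$, not in $y_1$, so there is no reason for $\int_Y\p_{y_1}u_M^1$ to be zero. Your final step reads the layer terms as $\p_t(\cdots)$ and $\mathrm{div}_{\bar x}(\cdots)$ acting on $\phi|_\Sigma$, and so it silently assumes $\bar q_1=0$.

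The repair is short and uses only what you already have.
\begin{enumerate}
\item Keep the full gradient in the limit identity, and first obtain the bulk equations by localization as you do.
\item Take $\phi=x_1\chi(x_1)\eta(t,x_2)$, where $\eta\in C_c^\infty((0,T)\times\Sigma)$ and $\chi$ is a cutoff equal to $1$ near $0$ and supported near $0$.
\item The bulk contributions vanish. Each bulk functional is continuous in $H^1((0,T)\times\Omega_i)$ and vanishes on $C_c^\infty((0,T)\times\Omega_i)$ by the bulk equations. Since $\phi|_{\Omega_i}$ has zero trace on $\Sigma$ and vanishes near the rest of the boundary, it lies in the closure of that space.
\item The $\rho_M$ term and the tangential part of the $D_M$ term vanish, because $\phi(t,0,\cdot)\equiv 0$.
\item What remains is $\int_0^T\int_\Sigma \bar q_1\,\eta\di{\bar x}\di{t}=0$ for all $\eta$, so $\bar q_1=0$.
\end{enumerate}
Only after this can the layer term be written as a distribution acting on $\phi|_\Sigma$, and then your derivation of \eqref{eq:transmition_condition_2} goes through. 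Alternatively, $\bar q_1=0$ follows from \eqref{eq:strong_form_u_1}--\eqref{eq:strong_form_u_1_bc} by testing with $y_1$. Those cell equations are derived only in the next lemma, though.
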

    \begin{proof}
         In the weak formulation of the microscopic problem, choose the test function $\phi=\phi_0 \in C_c^\infty ((0,T)\times \Omega)$. We get 
\begin{multline*}
    -\rho_L\int_0^T\int_{\oll}  \dfrac{\partial u_L^\es}{\partial t} \dfrac{\partial \phi_0}{\partial t} \di{x}-  \dfrac{1}{\es} \int_0^T\int_{\omm}  \left({\rho_M^\es} \dfrac{\partial u_M^\es}{\partial t}\right) \dfrac{\partial \phi_0}{\partial t}\di{x} -    \rho_R\int_0^T\int_{\orr} \dfrac{\partial u_R^\es}{\partial t} \dfrac{\partial \phi_0}{\partial t}\di{x}\\
    +D_L\int_0^T\int_{\oll}\nabla u_L^\es \cdot \nabla \phi_0   \di{x} + D_R\int_0^T\int_{\orr} \nabla u_R^\es\cdot\nabla \phi_0 \di{x} 
    + \dfrac{1}{\es}\int_0^T\int_{\omm}  D_M^\es  \nabla u_M^\es\cdot\nabla \phi_0 \di{x}\\
    = \int_0^T\int_{\oll} f_L\phi_0 \di{x}
\end{multline*}
Now, using Theorem \ref{theorem:convergence in the bulk} and Theorem \ref{theorem:convergence_in_the_layer} and taking the limit as $\es\rightarrow 0$, we obtain 
\begin{multline*}
    -\rho_L\int_0^T\int_{\Omega_L}  \dfrac{\partial u_L^0(t,x)}{\partial t} \dfrac{\partial \phi_0(t,x)}{\partial t} \di{x}\di{t}-    \rho_R\int_0^T\int_{\Omega_R} \dfrac{\partial u_R^0(t,x)}{\partial t} \dfrac{\partial \phi_0(t,x)}{\partial t}\di{x}\di{t}\\
    -  \int_0^T\int_{\Sigma}\int_\T\int_{Y}\int_Z  {\rho_M(t,\overline{x},\tau,y,z)} \left(\dfrac{\partial u_M^0(t,x)}{\partial t}+ \dfrac{\partial u_M^1(t,\overline{x},\tau,y)}{\partial \tau}\right) \dfrac{\partial \phi_0(t,x)}{\partial t}\di{z}\di{y}\di{\tau}\di{x}\di{t}\\
    +D_L\int_0^T\int_{\Omega_L}\nabla u_L^0(t,x) \cdot \nabla \phi_0(t,x)   \di{x} + D_R\int_0^T\int_{\Omega_R} \nabla u_R^0(t,x)\cdot\nabla \phi_0(t,x) \di{x}\di{t}\\ 
    +  \int_0^T\int_{\Sigma}\int_\T\int_{Y}\int_Z   D_M(t,\overline{x},\tau,y,z) \left( \nabla_{\overline{x}}u_M^0(t,\overline{x})+\nabla_y u_M^1(t,\overline{x},\tau,y)\right.\\
\left.+\nabla_zu_M^2(t,\overline{x},\tau,y,z)\right)\cdot \nabla \phi_0(t,x) \di{z}\di{y}\di{\tau}\di{x}\di{t}\\
    = \int_0^T\int_{\Omega_L} f_L(t,x)\phi_0(t,x) \di{x}\di{t}.
\end{multline*}

which is the weak form of the system \eqref{eq:macro_ul}--\eqref{eq:transmition_condition_2}.

    \end{proof}

    The preceding lemma derived the transmission condition involving the functions $u_M^0$, $u_M^1$, and $u_M^2$. However, this condition cannot be evaluated explicitly without further characterization of these functions. Therefore, in the following lemma, using well-constructed test functions, we derive a system of partial differential equations that will later allow us to represent $u_M^1$ and $u_M^2$ in terms of $u_M^0$ and suitable cell problems of elliptic and hyperbolic type.
    \begin{lemma}
   Assume $(A1)-(A2)$ and \eqref{eq:energy_estimate_1_case_1}--\eqref{eq:energy_estimate_3_case_1} hold. Then, in the distributional sense, the functions $u_M^0,u_M^1$ and $u_M^2$ from  \Cref{theorem:convergence_in_the_layer} satisfy
 \begin{subequations}  

\begin{multline}\label{eq:strong_form_u_1}
   \dfrac{\p}{\p \tau} \int_Z \left( {\rho_M(t,\overline{x},\tau,y,z)} \left(\dfrac{\partial u_M^0(t,x)}{\partial t}+ \dfrac{\partial u_M^1(t,\overline{x},\tau,y)}{\partial \tau}\right)\right)\di{z}\\
     -\mathrm{div_y}\int_Z \bigg( D_M(t,\overline{x},\tau,y,z)
    \left( \nabla_{\overline{x}}u_M^0(t,\overline{x})+\nabla_y u_M^1(t,\overline{x},\tau,y)+\nabla_zu_M^2(t,\overline{x},\tau,y,z)\right)\bigg)\di{z}=0\\
    \mbox{for a.e.} (t,\overline{x},\tau,y)\in (0,T)\times\Sigma\times (0,1)\times Y,
\end{multline}
along with the boundary condition
\begin{align}\label{eq:strong_form_u_1_bc}
    \bigg( D_M(t,\overline{x},\tau,y,z)
    \left( \nabla_{\overline{x}}u_M^0(t,\overline{x})+\nabla_y u_M^1(t,\overline{x},\tau,y)+\nabla_zu_M^2(t,\overline{x},\tau,y,z)\right)\bigg)\cdot n=0\quad\mbox{on}\quad Y_L\cup Y_R,
\end{align}
and
\begin{multline}\label{eq:strong_form_for_u_2}
    -\mathrm{div_z}\Big(   D_M(t,\overline{x},\tau,y,z)
    \left( \nabla_{\overline{x}}u_M^0(t,\overline{x})+\nabla_y u_M^1(t,\overline{x},\tau,y)+\nabla_zu_M^2(t,\overline{x},\tau,y,z)\right)\Big)=0\\
    \mbox{for a.e. } (t,\overline{x},\tau,y,z)\in (0,T)\times\Sigma\times (0,1)\times Y\times Z.
\end{multline}
\end{subequations}
\end{lemma}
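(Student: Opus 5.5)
The plan is to use oscillating test functions in the weak formulation \eqref{eq:weakform}, one for each fast scale, and pass to the limit with \Cref{theorem:convergence_in_the_layer}. For \eqref{eq:strong_form_u_1}--\eqref{eq:strong_form_u_1_bc} I take
\begin{align*}
\phi^\es(t,x)=\es\,\phi_1\Big(t,\bar x,\frac{t}{\es},\frac{x}{\es}\Big)\quad\mbox{in }\omm ,
\end{align*}
with $\phi_1\in C_c^\infty((0,T)\times\Sigma;C^\infty_\sharp(\T\times \overline{Y}))$, periodic in $\tau,y_2$ but unconstrained on $Y_L\cup Y_R$. To get an admissible $H^1$ test function on $\Omega$, I extend it into $\oll$ and $\orr$ by $\es\phi_1(t,\bar x,\tfrac t\es,\mp\tfrac12,\tfrac{x_2}{\es})\chi(x_1)$ with a fixed cutoff $\chi$, $\chi(0)=1$. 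The bulk contributions then carry a factor $\es$ multiplying $\es^{-1}$-sized derivatives. Since $\nabla u^\es_{L,R}$ and $\p_t u^\es_{L,R}$ are bounded in $L^2$ and the extension has $\es$-small support near the interface only after rescaling, I would rather cut off on a strip of width $\es$: use $\chi(x_1/\es)$. Then the bulk terms are $O(\sqrt{\es})$ by Cauchy--Schwarz, because the strip has measure $O(\es)$ and the integrands are $O(1)$ in $L^2$. The right-hand side vanishes by (A2) for $\es$ small, since $f_L,v_2$ are supported away from the layer.

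In the layer, $\p_t\phi^\es=\p_\tau\phi_1+O(\es)$ and $\nabla\phi^\es=\nabla_y\phi_1+O(\es)$. Thus
\begin{align*}
\frac1\es\Mint \rho_M^\es\,\p_tu_M^\es\,\p_t\phi^\es \quad\mbox{and}\quad \frac1\es\Mint D_M^\es\nabla u_M^\es\cdot\nabla\phi^\es
\end{align*}
converge, via \eqref{eq:multiscale_convergence_for_gradient_term}--\eqref{eq:multiscale_convergence_for_time_derivative_term}, to the integrals over $(0,T)\times\Sigma\times\T\times Y\times Z$ of $\rho_M(\p_tu_M^0+\p_\tau u_M^1)\p_\tau\phi_1$ and $D_M(\nabla_{\bar x}u_M^0+\nabla_yu_M^1+\p_zu_M^2e_2)\cdot\nabla_y\phi_1$. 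One technical point is that the products $\rho_M^\es\phi$ must be admissible test functions in Definition \ref{def:multiscale_convergence}. This holds because $\rho_M,D_M\in W^{1,\infty}$ are periodic (continuous after a density argument), so $\rho_M\p_\tau\phi_1$ is an admissible test function.

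The resulting limit identity, with $\phi_1$ independent of $z$, is
\begin{align*}
\int\!\!\int\Big[-\int_Z\rho_M(\p_tu_M^0+\p_\tau u_M^1)\di{z}\,\p_\tau\phi_1+\int_ZD_M(\cdots)\di{z}\cdot\nabla_y\phi_1\Big]=0 .
\end{align*}
Taking first $\phi_1$ compactly supported in $Y$ gives \eqref{eq:strong_form_u_1} distributionally. Then allowing arbitrary traces on $Y_L\cup Y_R$ and integrating by parts in $y_1$ yields the Neumann condition \eqref{eq:strong_form_u_1_bc}. Strictly, this is the $z$-averaged flux condition, which is what the weak form gives.

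For \eqref{eq:strong_form_for_u_2} I use the finer scale: $\phi^\es=\es^2\psi(t,\bar x,\tfrac t\es,\tfrac x\es,\tfrac{x_2}{\es^2})$ with $\psi$ compactly supported in $y_1\in(-\tfrac12,\tfrac12)$, so no extension into the bulk is needed. Now $\nabla\phi^\es=\p_z\psi\,e_2+O(\es)$ and $\p_t\phi^\es=O(\es)$. Together with the bound $\es^{-1/2}\|\p_tu_M^\es\|\le C$, the time term vanishes in the limit, which leaves
\begin{align*}
\int D_M(\nabla_{\bar x}u_M^0+\nabla_yu_M^1+\p_zu_M^2e_2)\cdot e_2\,\p_z\psi=0 ,
\end{align*}
that is, \eqref{eq:strong_form_for_u_2}.

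I expect the main obstacle to be the first step: building an admissible test function that reaches the lateral boundaries $Y_L,Y_R$ and showing that the bulk and strip contributions really vanish. This needs care with the $\es$-scaled cutoff and the energy bounds \eqref{eq:energy_estimate_2_case_1}--\eqref{eq:energy_estimate_3_case_1}. If the cutoff is awkward, I would instead argue through the interface terms with \Cref{theorem: convergence in the layer left}, whose flux contributions are multiplied by $\es$ and so vanish.
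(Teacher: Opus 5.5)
Your proposal is correct and follows essentially the same route as the paper: you test with $\es\phi_1(t,\bar x,\tfrac{t}{\es},\tfrac{x}{\es})$, extended into $\oll,\orr$ by a cutoff on a strip of width $\es$ so that the bulk terms are $O(\sqrt{\es})$, and then with $\es^2\psi(t,\bar x,\tfrac{t}{\es},\tfrac{x}{\es},\tfrac{x_2}{\es^2})$ for the $z$-equation, where taking $\psi$ compactly supported in $y_1$ simply avoids the bulk extension the paper carries along. Your caveat that the weak form gives only the $z$-averaged Neumann condition is harmless, because on $Y_L\cup Y_R$ the normal flux is $\pm D_M\,\p_{y_1}u_M^1$ with $\p_{y_1}u_M^1$ independent of $z$ and $D_M\geq D_{min}>0$, so the averaged condition already yields the stated pointwise one (formally).
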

\begin{proof}
    We first construct a test function in $(0,T)\times\Omega$ with suitable regularity and support in both fast and slow variables, which is then used in the weak formulation of the microscopic problem. Then, using the uniform energy estimates and the multiscale convergence for thin layer, as $\es\rightarrow0$, we deduce \eqref{eq:strong_form_u_1} and \eqref{eq:strong_form_u_1_bc}.
Let
\begin{align*}
    \phi_1:=\begin{cases} 
      \alpha_1(t)\alpha_2(\frac{t}{\es})\beta(\bar x) \psi_L(\frac{\bar x}{\es})\rho(\frac{x_1}{\es}) &  \qquad \mbox{in}\qquad (0,T)\times \oll\\
      \alpha_1(t)\alpha_2(\frac{t}{\es})\beta(\bar x) \psi_M(\frac{x}{\es}) & \qquad \mbox{in}\qquad (0,T)\times \omm\\
      \alpha_1(t)\alpha_2(\frac{t}{\es})\beta(\bar x) \psi_R(\frac{\bar x}{\es})\rho(\frac{-x_1}{\es}) & \qquad \mbox{in}\qquad (0,T)\times \orr,
   \end{cases}
\end{align*}
where
\begin{itemize}
    \item $\alpha_1\in C_c^\infty((0,T))$ and $\beta\in C_c^\infty(\Sigma).$
    \item $ \alpha_2\in C_{c}^\infty([0,1])$ which extends periodically to whole $\mathbb{R}.$
    \item $\psi_M\in C_c^\infty(\overline Y) $ which extends periodically with respect to $y_2$ variable.
    \item For $i\in \{L,R\}$, $ \psi_i $ is defined in $(0,T)\times \overline{\oii}$ such that $\psi_i\equiv\psi_M$ on $i^\es$ and independent of $x_1$ variable.
    \item $\rho\in C^\infty([0,\infty))$ is a cut off function satisfying  $0\leq\rho \leq1$, $\rho=1$ in $[0,1]$ and $\rho=0 $ in $[2,\infty).$
\end{itemize}

In the weak formulation of the microscopic problem, we choose $\phi=\es\phi_1$ as a test and obtain the identity
 \begin{multline}\label{eq:phi1weakform}
   \underbrace{ \es \rho_L\int_0^T\int_{\oll}  \dfrac{\partial^2u_L^\es}{\partial t^2} (\alpha_1(t)\alpha_2(\frac{t}{\es})\beta(\bar x) \psi_L(\frac{\bar x}{\es})\rho(\frac{x_1}{\es}))\di{x}\di{t}}_{I_1}\\
    +  \underbrace{\int_0^T\int_{\omm}  \dfrac{\partial}{\partial t}\left({\rho_M^\es} \dfrac{\partial u_M^\es}{\partial t}\right) ( \alpha_1(t)\alpha_2(\frac{t}{\es})\beta(\bar x) \psi_M(\frac{x}{\es}))\di{x}\di{t}}_{I_2} \\
    +   \underbrace{ \es \rho_R\int_0^T\int_{\orr} \dfrac{\partial^2u_R^\es}{\partial t^2} ( \alpha_1(t)\alpha_2(\frac{t}{\es})\beta(\bar x) \psi_R(\frac{\bar x}{\es})\rho(\frac{-x_1}{\es}))\di{x}\di{t}}_{I_3}\\
    +\underbrace{\es D_L\int_0^T\int_{\oll}\nabla u_L^\es \cdot \nabla ( \alpha_1(t)\alpha_2(\frac{t}{\es})\beta(\bar x) \psi_L(\frac{\bar x}{\es})\rho(\frac{x_1}{\es}))   \di{x}\di{t}}_{I_4}\\
    + \underbrace{\es D_R\int_0^T\int_{\orr} \nabla u_R^\es\cdot\nabla ( \alpha_1(t)\alpha_2(\frac{t}{\es})\beta(\bar x) \psi_R(\frac{\bar x}{\es})\rho(\frac{-x_1}{\es})) \di{x}\di{t}}_{I_5} \\
    +\underbrace{ \int_0^T\int_{\omm}  D_M^\es  \nabla u_M^\es\cdot\nabla (\alpha_1(t)\alpha_2(\frac{t}{\es})\beta(\bar x) \psi_M(\frac{x}{\es})) \di{x}\di{t}}_{I_6}\\
    = \underbrace{\es \int_0^T\int_{\oll} f_L(\alpha_1(t)\alpha_2(\frac{t}{\es})\beta(\bar x) \psi_L(\frac{\bar x}{\es})\rho(\frac{x_1}{\es})) \di{x}\di{t}}_{I_7}.
\end{multline}
We study the limit of each term in the aforementioned identity separately.
Using integration by parts with respect to the time variable, we can write the term $ I_1$ as
\begin{multline*} 
    I_1=-\es \rho_L\int_0^T\int_{\oll}  \dfrac{\partial u_L^\es}{\partial t} (\alpha_1'(t)\alpha_2(\frac{t}{\es})\beta(\bar x) \psi_L(\frac{\bar x}{\es})\rho(\frac{x_1}{\es}))\di{x}\di{t}\\
    -\rho_L\int_0^T\int_{\oll}  \dfrac{\partial u_L^\es}{\partial t} (\alpha_1(t)\alpha_2'(\frac{t}{\es})\beta(\bar x) \psi_L(\frac{\bar x}{\es})\rho(\frac{x_1}{\es}))\di{x}\di{t}.
\end{multline*}
The terms in the first integral are uniformly bounded with respect to the $L^2$-norm; therefore, using the Cauchy-Schwarz inequality, the first term converges to $0$ as $\es\rightarrow0$. 
For the second term of $I_1$, we have 

\begin{align*}
    \rho_L\int_0^T\int_{\oll}  \dfrac{\partial u_L^\es}{\partial t} (\alpha_1(t)\alpha_2'(\frac{t}{\es})\beta(\bar x) \psi_L(\frac{\bar x}{\es})\rho(\frac{x_1}{\es}))\di{x}\di{t}&\leq C \int_0^T\int_{\oll} \left| \dfrac{\partial u_L^\es}{\partial t}\right| |\rho(\frac{x_1}{\es}))| \di{x}\di{t}\\
    &\leq C  \left\|\ \dfrac{\partial u_L^\es}{\partial t}\right\|_{L^{2}(0,T;L^2(\oll))}T\Big(\int_0^{2\es}\rho(\frac{x_1}{\es})^2\di{x_1}\Big)^{\frac{1}{2}}\\
    &\leq \sqrt{\es} C.
\end{align*}
Thus, the second term also converges to $0$ as $\es\rightarrow 0$, this leads to $I_1\rightarrow0 $ as $\es\rightarrow 0$. Similarly, $I_3\rightarrow0 $ as $\es\rightarrow 0$.

Next, we look at the term $I_4$, and observe that
\begin{multline*}
  I_4
    =\es D_L\int_0^T\int_{\oll}\nabla u_L^\es \cdot \alpha_1(t)\alpha_2(\frac{t}{\es})  \nabla_{ x} \beta(\bar x) \psi_L(\frac{\bar x}{\es})\rho(\frac{x_1}{\es})   \di{x}\di{t}\\
    +D_L\int_0^T\int_{\oll}\nabla u_L^\es \cdot \alpha_1(t)\alpha_2(\frac{t}{\es})  \beta(\bar x)  \nabla_{ x}\psi_L(\frac{\bar x}{\es})\rho(\frac{x_1}{\es})   \di{x}\di{t}\\
    +D_L\int_0^T\int_{\oll}\nabla u_L^\es \cdot \alpha_1(t)\alpha_2(\frac{t}{\es})  \beta(\bar x)  \psi_L(\frac{\bar x}{\es})\rho'(\frac{x_1}{\es})e_1   \di{x}\di{t}.
\end{multline*}
it follows that the first term converges to $0$ as $\es\rightarrow0$. For the second term, we have
\begin{align*}
    \left|D_L\int_0^T\int_{\oll}\nabla u_L^\es \cdot \alpha_1(t)\alpha_2(\frac{t}{\es})  \beta(\bar x)  \nabla_{ x}\psi_L(\frac{\bar x}{\es})\rho(\frac{x_1}{\es})   \di{x}\di{t}\right|&\leq C \|\nabla u_L^\es\|_{L^2(0,T;L^2(\oll))}T\Big(\int_0^{2\es}\rho(\frac{x_1}{\es})^2\di{x_1})\Big)^{\frac{1}{2}}\\
    &\leq \sqrt{\es} C,
\end{align*}
where $C$ is independent of $\es$, and for the third term, we have
\begin{align*}
    \left|D_L\int_0^T\int_{\oll}\nabla u_L^\es \cdot \alpha_1(t)\alpha_2(\frac{t}{\es})  \beta(\bar x)  \psi_L(\frac{\bar x}{\es})\rho'(\frac{x_1}{\es}) e_1  \di{x}\di{t}\right|&\leq C \|\nabla u_L^\es\|_{L^2(0,T;L^2(\oll))}T\Big(\int_0^{2\es}\rho'(\frac{x_1}{\es})^2\di{x_1})\Big)^{\frac{1}{2}}\\
    &\leq \sqrt{\es} C,
\end{align*}
where $C$ is independent of $\es$. Consequently,  as $\es\rightarrow 0,$  we get $I_4\rightarrow 0.$ In a similar way, we deduce $I_5\rightarrow 0$ as $\es\rightarrow 0$. For the term $I_7$, each term inside the integral is bounded; therefore, as $\es\rightarrow 0$, $I_7\rightarrow 0$.

Now, we look at the term $I_2$ and we have 
\begin{multline*}
    I_2=- \int_0^T\int_{\omm}  {\rho_M^\es} \dfrac{\partial u_M^\es}{\partial t} ( \alpha_1'(t)\alpha_2(\frac{t}{\es})\beta(\bar x) \psi_M(\frac{x}{\es}))\di{x}\di{t}\\
    -\frac{1}{\es} \int_0^T\int_{\omm} {\rho_M^\es} \dfrac{\partial u_M^\es}{\partial t} ( \alpha_1(t)\alpha_2'(\frac{t}{\es})\beta(\bar x) \psi_M(\frac{x}{\es}))\di{x}\di{t}.
\end{multline*}
Relying on  \Cref{def:multiscale_convergence} and \eqref{eq:multiscale_convergence_for_time_derivative_term}, we arrive at
\begin{align*}
    \lim_{\es\rightarrow 0}\int_0^T\int_{\omm}  {\rho_M^\es} \dfrac{\partial u_M^\es}{\partial t} ( \alpha_1'(t)&\alpha_2(\frac{t}{\es})\beta(\bar x) \psi_M(\frac{x}{\es}))\di{x}\di{t}\\
    &=\lim_{\es\rightarrow 0}\es\cdot \lim_{\es\rightarrow 0} \frac 1\es \int_0^T\int_{\omm}  {\rho_M^\es} \dfrac{\partial u_M^\es}{\partial t} ( \alpha_1'(t)\alpha_2(\frac{t}{\es})\beta(\bar x) \psi_M(\frac{x}{\es}))\di{x}\di{t}\\
    &= 0,
\end{align*}
and
\begin{multline*}
    \lim_{\es\rightarrow 0}\frac{1}{\es} \int_0^T\int_{\omm} {\rho_M^\es} \dfrac{\partial u_M^\es}{\partial t} ( \alpha_1(t)\alpha_2'(\frac{t}{\es})\beta(\bar x) \psi_M(\frac{x}{\es}))\di{x}\di{t}\\
    = \mint  {\rho_M(t,\overline{x},\tau,y,z)} \left(\dfrac{\partial u_M^0(t,x)}{\partial t}+ \dfrac{\partial u_M^1(t,\overline{x},\tau,y)}{\partial \tau}\right) \\ ( \alpha_1(t)\alpha_2'(\tau)\beta(\bar x) \psi_M(y))\dzdy.
\end{multline*}
It remains to show the limit of the term $I_6$. It holds that
\begin{multline*}
    I_6=\int_0^T\int_{\omm}  D_M^\es  \nabla u_M^\es\cdot (\alpha_1(t)\alpha_2(\frac{t}{\es})\nabla_{ x}\beta(\bar x) \psi_M(\frac{x}{\es})) \di{x}\di{t}\\
    +\frac1\es\int_0^T\int_{\omm}  D_M^\es  \nabla u_M^\es\cdot (\alpha_1(t)\alpha_2(\frac{t}{\es})\beta(\bar x) \nabla\psi_M(\frac{x}{\es})) \di{x}\di{t}.
\end{multline*}
Using \Cref{def:multiscale_convergence} and \eqref{eq:multiscale_convergence_for_gradient_term}, for the first term we have 
\begin{align*}
    \lim_{\es\rightarrow 0} \int_0^T\int_{\omm}  D_M^\es  \nabla u_M^\es\cdot (\alpha_1(t)&\alpha_2(\frac{t}{\es})\nabla\beta(\bar x)\psi_M(\frac{x}{\es})) \di{x}\di{t}\\
    &=\lim_{\es\rightarrow 0}\es\cdot \lim_{\es\rightarrow 0} \frac1\es \int_0^T\int_{\omm}  D_M^\es  \nabla u_M^\es\cdot (\alpha_1(t)\alpha_2(\frac{t}{\es})\nabla\beta(\bar x)\psi_M(\frac{x}{\es})) \di{x}\di{t}\\
    &= 0,
\end{align*}
and for the second term, we have 
\begin{multline*}
    \lim_{\es\rightarrow 0}\frac1\es\int_0^T\int_{\omm}  D_M^\es  \nabla u_M^\es\cdot (\alpha_1(t)\alpha_2(\frac{t}{\es})\beta(\bar x) \nabla\psi_M(\frac{x}{\es})) \di{x}\di{t}\\
    =\int_0^T\int_{\Sigma}\int_\T\int_{Y}\int_Z   D_M(t,\overline{x},\tau,y,z)
    \left( \nabla_{\overline{x}}u_M^0(t,\overline{x})+\nabla_y u_M^1(t,\overline{x},\tau,y)+\nabla_zu_M^2(t,\overline{x},\tau,y,z)\right)\\
    \cdot ( \alpha_1(t)\alpha_2(\tau)\beta(\bar x) \nabla_y\psi_M(y)) \di{z}\di{y}\di{\tau}\di{x}\di{t}.
\end{multline*}


Collecting the limits of $I_1-I_7$, as $\es\rightarrow 0$, from the identity \eqref{eq:phi1weakform} we arrive at
\begin{multline*}
     -\int_0^T\int_{\Sigma}\int_\T\int_{Y}\int_Z  {\rho_M(t,\overline{x},\tau,y,z)} \left(\dfrac{\partial u_M^0(t,x)}{\partial t}+ \dfrac{\partial u_M^1(t,\overline{x},\tau,y)}{\partial \tau}\right) \\ ( \alpha_1(t)\alpha_2'(\tau)\beta(\bar x) \psi_M(y))\di{z}\di{y}\di{\tau}\di{x}\di{t}\\
     +\int_0^T\int_{\Sigma}\int_\T\int_{Y}\int_Z   D_M(t,\overline{x},\tau,y,z)
    \left( \nabla_{\overline{x}}u_M^0(t,\overline{x})+\nabla_y u_M^1(t,\overline{x},\tau,y)+\nabla_zu_M^2(t,\overline{x},\tau,y,z)\right)\\
    \cdot ( \alpha_1(t)\alpha_2(\tau)\beta(\bar x) \nabla_y\psi_M(y)) \di{z}\di{y}\di{\tau}\di{x}\di{t}=0.
\end{multline*}

That is the weak form of the identity \eqref{eq:strong_form_u_1} and \eqref{eq:strong_form_u_1_bc}. 

We are left to show the identity \eqref{eq:strong_form_for_u_2}. To do so, we construct a new test function $\phi_2$ in $(0,T)\times\Omega$.
Let
\begin{align*}
    \phi_2:=\begin{cases} 
      \alpha_1(t)\alpha_2(\frac{t}{\es})\beta(\bar x) \psi_L(\frac{\bar x}{\es})\rho(\frac{x_1}{\es})\gamma(\frac{x_2}{\es^2}) &  \qquad \mbox{in}\qquad (0,T)\times \oll\\
      \alpha_1(t)\alpha_2(\frac{t}{\es})\beta(\bar x) \psi_M(\frac{x}{\es})\gamma(\frac{x_2}{\es^2}) & \qquad \mbox{in}\qquad (0,T)\times \omm\\
      \alpha_1(t)\alpha_2(\frac{t}{\es})\beta(\bar x) \psi_R(\frac{\bar x}{\es})\rho(\frac{-x_1}{\es})\gamma(\frac{x_2}{\es^2}) & \qquad \mbox{in}\qquad (0,T)\times \orr,
   \end{cases}
\end{align*}
where $\alpha_1,\alpha_2,\beta, \psi_L,\psi_M,\psi_R,\rho$ are functions defined as before and $\gamma\in C_c^{\infty}([0,1])$ which extends periodically to whole $\mathbb{R}$. Choosing this well-constructed test function $\phi=\es^2 \phi_2$ in the weak formulation \eqref{eq:weakform}, we get
 \begin{multline}\label{eq:phi2weakform}
   \underbrace{ \es^2 \rho_L\int_0^T\int_{\oll}  \dfrac{\partial^2u_L^\es}{\partial t^2} (\alpha_1(t)\alpha_2(\frac{t}{\es})\beta(\bar x) \psi_L(\frac{\bar x}{\es})\rho(\frac{x_1}{\es})\gamma(\frac{x_2}{\es^2}))\di{x}\di{t}}_{I_8}\\
    +  \underbrace{\es\int_0^T\int_{\omm}  \dfrac{\partial}{\partial t}\left({\rho_M^\es} \dfrac{\partial u_M^\es}{\partial t}\right) ( \alpha_1(t)\alpha_2(\frac{t}{\es})\beta(\bar x) \psi_M(\frac{x}{\es})\gamma(\frac{x_2}{\es^2}))\di{x}\di{t}}_{I_9} \\
    +   \underbrace{ \es^2 \rho_R\int_0^T\int_{\orr} \dfrac{\partial^2u_R^\es}{\partial t^2} ( \alpha_1(t)\alpha_2(\frac{t}{\es})\beta(\bar x) \psi_R(\frac{\bar x}{\es})\rho(\frac{-x_1}{\es})\gamma(\frac{x_2}{\es^2}))\di{x}\di{t}}_{I_{10}}\\
    +\underbrace{\es^2 D_L\int_0^T\int_{\oll}\nabla u_L^\es \cdot \nabla ( \alpha_1(t)\alpha_2(\frac{t}{\es})\beta(\bar x) \psi_L(\frac{\bar x}{\es})\rho(\frac{x_1}{\es})\gamma(\frac{x_2}{\es^2}))   \di{x}\di{t}}_{I_{11}}\\
    + \underbrace{\es^2 D_R\int_0^T\int_{\orr} \nabla u_R^\es\cdot\nabla ( \alpha_1(t)\alpha_2(\frac{t}{\es})\beta(\bar x) \psi_R(\frac{\bar x}{\es})\rho(\frac{-x_1}{\es})\gamma(\frac{x_2}{\es^2})) \di{x}\di{t}}_{I_{12}} \\
    +\underbrace{ \es\int_0^T\int_{\omm}  D_M^\es  \nabla u_M^\es\cdot\nabla (\alpha_1(t)\alpha_2(\frac{t}{\es})\beta(\bar x) \psi_M(\frac{x}{\es})\gamma(\frac{x_2}{\es^2})) \di{x}\di{t}}_{I_{13}}\\
    = \underbrace{\es^2 \int_0^T\int_{\oll} f_L(\alpha_1(t)\alpha_2(\frac{t}{\es})\beta(\bar x) \psi_L(\frac{\bar x}{\es})\rho(\frac{x_1}{\es})\gamma(\frac{x_2}{\es^2})) \di{x}\di{t}}_{I_{14}}.
\end{multline}
Similar to the case of \eqref{eq:phi1weakform}, we derive that $I_8,I_9,I_{10},I_{11},I_{12},I_{14}\rightarrow 0$ as $\es\rightarrow 0$. It remains to check the limit of the term $I_{13}$ as $\es\rightarrow 0$.  It holds that
\begin{multline*}
    I_{13}= \es\int_0^T\int_{\omm}  D_M^\es  \nabla u_M^\es\cdot (\alpha_1(t)\alpha_2(\frac{t}{\es})\nabla\beta(\bar x) \psi_M(\frac{x}{\es})\gamma(\frac{x_2}{\es^2})) \di{x}\di{t}\\
    +\int_0^T\int_{\omm}  D_M^\es  \nabla u_M^\es\cdot (\alpha_1(t)\alpha_2(\frac{t}{\es})\beta(\bar x) \nabla\psi_M(\frac{x}{\es})\gamma(\frac{x_2}{\es^2})) \di{x}\di{t}\\
    +\frac1\es\int_0^T\int_{\omm}  D_M^\es  \nabla u_M^\es\cdot (\alpha_1(t)\alpha_2(\frac{t}{\es})\beta(\bar x) \psi_M(\frac{x}{\es})\gamma'(\frac{x_2}{\es^2})e_2) \di{x}\di{t}.
\end{multline*}
By means of \Cref{def:multiscale_convergence} and \eqref{eq:multiscale_convergence_for_gradient_term}, we see that the first two terms converge to $0$. For the final term, taking the limit $\es\rightarrow 0$, we obtain
\begin{align*}
    &\!\!\!\!\lim_{\es\rightarrow 0}\frac1\es\int_0^T\int_{\omm}  D_M^\es  \nabla u_M^\es\cdot (\alpha_1(t)\alpha_2(\frac{t}{\es})\beta(\bar x) \psi_M(\frac{x}{\es})\gamma'(\frac{x_2}{\es^2})) e_2\di{x}\di{t}\\
    &=\int_0^T\int_{\Sigma}\int_\T\int_{Y}\int_Z   D_M(t,\overline{x},\tau,y,z)
    \left( \nabla_{\overline{x}}u_M^0(t,\overline{x})+\nabla_y u_M^1(t,\overline{x},\tau,y)+\nabla_zu_M^2(t,\overline{x},\tau,y,z)\right)\\
    &\qquad\qquad\qquad\qquad \cdot ( \alpha_1(t)\alpha_2(\tau)\beta(\bar x) \psi_M(y)\gamma'(z)e_2) \di{z}\di{y}\di{\tau}\di{x}\di{t}.
\end{align*}
Therefore, taking the limit $\es\rightarrow 0$ in \eqref{eq:phi2weakform}, we deduce the following identity
\begin{multline*}
    \int_0^T\int_{\Sigma}\int_\T\int_{Y}\int_Z   D_M(t,\overline{x},\tau,y,z)
    \left( \nabla_{\overline{x}}u_M^0(t,\overline{x})+\nabla_y u_M^1(t,\overline{x},\tau,y)+\nabla_zu_M^2(t,\overline{x},\tau,y,z)\right)\\
    \cdot ( \alpha_1(t)\alpha_2(\tau)\beta(\bar x) \psi_M(y)\gamma'(z)e_2) \di{z}\di{y}\di{\tau}\di{x}\di{t}=0.
\end{multline*}
This is precisely the weak form of  \eqref{eq:strong_form_for_u_2}.
\end{proof}

We now focus on deriving the initial conditions for the limit functions $u_L^0$, $u_R^0$, and $u_M^0$.

\begin{lemma}\label{lemma:initial_condition}
     Assume $(A1)-(A2)$ and \eqref{eq:energy_estimate_1_case_1}--\eqref{eq:energy_estimate_3_case_1} hold. Then,  the functions $u_L^0,u_R^0,u_M^0$ from \Cref{theorem:convergence in the bulk} and \Cref{theorem:convergence_in_the_layer} satisfy the following initial conditions
    \begin{align*}
u_L^0(0) &= v_1, &
u_M^0(0) &= 0, &
u_R^0(0) &= 0, \\
\frac{\partial u_L^0}{\partial t}(0) &= v_2, &
\frac{\partial u_M^0}{\partial t}(0) &= 0, &
\frac{\partial u_R^0}{\partial t}(0) &= 0.
\end{align*}

\end{lemma}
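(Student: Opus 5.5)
We split the statement into the displacement conditions and the velocity conditions.

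\emph{Displacement conditions.} These follow from the time regularity already available. By \Cref{theorem:convergence in the bulk}, for $i\in\{L,R\}$ we have $\mathds{1}_{\oii}u_i^\es\to u_i^0$ strongly in $L^2$, and $\mathds{1}_{\oii}\partial_t u_i^\es$ is bounded in $L^2$. Hence $\mathds{1}_{\oii}u_i^\es$ is bounded in $H^1(0,T;L^2(\Omega_i))$, which embeds continuously into $C([0,T];L^2(\Omega_i))$. Weak convergence in $H^1(0,T;L^2)$ then gives $u_i^\es(0)\rightharpoonup u_i^0(0)$ weakly in $L^2(\Omega_i)$. The initial data converge: $\mathds{1}_{\oll}v_1\to v_1$, and $u_R^\es(0)=0$. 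This yields $u_L^0(0)=v_1$ and $u_R^0(0)=0$.

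For the layer, consider $\varphi\in C^\infty([0,T])$ with $\varphi(T)=0$ and $\varphi(0)=1$, and $\beta\in C^\infty_\sharp(\Sigma)$. We write
\[
\frac1\es\int_0^T\int_{\omm}\partial_t u_M^\es\,\varphi\beta\di{x}\di{t}=-\frac1\es\int_0^T\int_{\omm}u_M^\es\,\varphi'\beta\di{x}\di{t},
\]
using $u_M^\es(0)=0$. We pass to the limit with \eqref{eq:multiscale_convergence_for_u} and \eqref{eq:multiscale_convergence_for_time_derivative_term}. The $\partial_\tau u_M^1$ term integrates to zero over $\T$ by periodicity. This gives $\int\partial_tu_M^0\varphi\beta=-\int u_M^0\varphi'\beta$, so $u_M^0(0)=0$. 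Alternatively, one can invoke \Cref{lemma:first_transmission_condition}: $u_M^0=u_R^0$ on $\Sigma$, and the trace of $u_R^0(0)=0$ vanishes.

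\emph{Velocity conditions.} These require the weak formulation \eqref{eq:weakform}, since $\partial_t^2u^\es$ is not uniformly bounded. We take a test function $\phi(t,x)=\varphi(t)\psi(x)$ with $\psi\in C^\infty_\sharp(\overline\Omega)$ independent of $\es$, and $\varphi$ as above. We pass to the limit exactly as in the proof of \Cref{lemma:derivation_of_uL_n_uR}, now keeping the term $\int_{\oll}\rho_Lv_2\psi\,\varphi(0)$ on the right-hand side. This produces the limit weak identity with initial-velocity term $\rho_L\int_{\Omega_L}v_2\psi$.

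On the other hand, we use Lemma \ref{lemma:derivation_of_uL_n_uR}, i.e. the macroscopic equations hold distributionally. The bulk equations show $\partial_t^2 u_i^0\in L^2(0,T;H^{-1})$, so $\partial_t u_i^0(0)$ is defined in a weak sense. Integrating the limit identity by parts in time and subtracting the distributional equations leaves the boundary terms at $t=0$:
\[
\rho_L\int_{\Omega_L}\partial_tu_L^0(0)\psi+\rho_R\int_{\Omega_R}\partial_tu_R^0(0)\psi+\int_\Sigma\Big(\int_{\T\times Y\times Z}\rho_M(\partial_tu_M^0+\partial_\tau u_M^1)\Big)(0)\,\psi
=\rho_L\int_{\Omega_L}v_2\psi .
\]
Taking $\psi$ supported away from $\Sigma$ and localized in either bulk domain gives $\partial_tu_L^0(0)=v_2$ and $\partial_tu_R^0(0)=0$.

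With these, the identity reduces to the vanishing of the interface flux term at $t=0$. To obtain $\partial_tu_M^0(0)=0$ from this, we must separate $\partial_t u_M^0(0)$ from the $u_M^1$ contribution. Our first attempt is again \Cref{lemma:first_transmission_condition}: since $u_M^0=u_R^0$ on $\Sigma$ and $\partial_tu_R^0\in L^2(0,T;H^1)$ has a trace, the interface datum is inherited from the right bulk. We will then argue $\partial_tu_R^0(0)|_\Sigma=0$ via a time-continuity/regularity argument for the bulk wave equation.

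\emph{Main obstacle.} Justifying the trace at $t=0$ of time derivatives and of the interface flux $\int\rho_M(\partial_tu_M^0+\partial_\tau u_M^1)$ is the hard step, because these are only known in $L^2$ in time. It needs a $\partial_t$ bound of this quantity in a dual space, which comes from \eqref{eq:transmition_condition_2}. The time-oscillating $\partial_\tau u_M^1$ part must be handled by averaging over $\T$.
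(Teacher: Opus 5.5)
\paragraph{Verdict.} Your displacement conditions are fine, and so are the bulk velocities $\partial_tu_L^0(0)=v_2$, $\partial_tu_R^0(0)=0$. For $u_L^0(0)$ and $u_R^0(0)$, your use of the weak continuity of $v\mapsto v(0)$ on $H^1(0,T;L^2(\Omega_i))$ is shorter than the paper's double integration by parts in time. Your layer argument for $u_M^0(0)=0$ is the paper's. The gap is $\partial_tu_M^0(0)=0$, and the route you sketch cannot work.

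\paragraph{Why the bulk route fails.} First, $\partial_tu_R^0\in L^2(0,T;H^1)$ is not available. The energy estimates bound $\partial_tu^\es$ only in $L^\infty(0,T;L^2)$, so you only know $u_R^0\in L^2(0,T;H^1)\cap H^1(0,T;L^2)$, and $\partial_tu_R^0$ has no trace on $\Sigma$. Second, and more fundamentally, no time-continuity or regularity argument in the bulk can give $\partial_tu_R^0(0)|_\Sigma=0$. The interior datum $\partial_tu_R^0(0)=0$ does not constrain the boundary velocity at $t=0$. For example, $u(t,x)=(t-x)_+$ solves $u_{tt}=u_{xx}$ on $x>0$ with $u(0)=\partial_tu(0)=0$ there, yet its boundary value $u(t,0)=t$ has velocity $1$. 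The initial velocity on $\Sigma$ is independent information, and it can only come from the layer.

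\paragraph{The missing step.} Use the identity you already derived. Once the bulk terms are removed, it says that the interface momentum
$m:=\int_{\T\times Y\times Z}\rho_M(\partial_tu_M^0+\partial_\tau u_M^1)\di{z}\di{y}\di{\tau}$
vanishes at $t=0$. This is exactly what the paper's last step yields: it tests \eqref{eq:weakform} with $\phi_0$ not vanishing at $t=0$, lets $\es\to0$, and reads off ``$\partial_tu_M^0(0)=0$''. The interface carries no initial momentum because $\partial_tu_M^\es(0)=0$.

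To isolate $\partial_tu_M^0$, use the layer structure rather than the bulk:
\begin{itemize}
\item If $\rho_M$ is independent of $\tau$ (Case 1), then $\int_\T\rho_M\partial_\tau u_M^1\di{\tau}=0$ by periodicity. Hence $m=\bigl(\int_{Y\times Z}\rho_M\bigr)\partial_tu_M^0$, with a factor bounded below by $\rho_{min}$, and $\partial_tu_M^0(0)=0$ follows.
\item In general, the cell representation $u_M^1=\partial_tu_M^0h+\nabla_{\bar x}u_M^0\cdot\Psi$ (which needs (A3)) gives $m=\rho_1^*\partial_tu_M^0+\rho_2^*\cdot\nabla_{\bar x}u_M^0$. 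The second term vanishes weakly at $t=0$ because $u_M^0(0)=0$, so you additionally need $\rho_1^*\neq0$.
\item Without such structure, $m(0)=0$ is the form in which the condition is actually obtained.
\end{itemize}

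\paragraph{Remaining technical point.} The time trace of $m$, which you correctly flag, still has to be justified; the paper does not address it either. A workable route is to stay inside the limit weak formulation, which controls the time derivative of the total momentum functional in a dual space. You can then separate the interface part with test functions concentrating on $\Sigma$, using that the bulk velocities are bounded in $L^\infty(0,T;L^2)$.
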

\begin{proof}
    We have $u_L^\es(0)=v_1,\,u_R^\es(0)=0$ and $\dfrac{\partial u_L^\es}{\partial t}(0)=v_2, \,\dfrac{\partial u_R^\es}{\partial t}(0)=0$
    with $v_1,v_2\in C_c(\oll)$. In the weak form of the microscopic problem, choose $\phi\in C_c^\infty([0,T)\times \Omega_L)$ and  $\es$ small enough such that $\operatorname{supp}( \phi)\cup \operatorname{supp}( v_1)\cup \operatorname{supp}(v_2)\subset [0,T)\times \Omega_L^\es$. Integrating the weak form from $0$ to $T$, then performing the integration by parts with respect to the time variable, we get
    \begin{multline*}
    -\rho_L\int_0^T\int_{\oll}  \dfrac{\partial u_L^\es}{\partial t} \dfrac{\partial \phi}{\partial t} \di{x}\di{t}
    +D_L\int_0^T\int_{\oll}\nabla u_L^\es \cdot \nabla \phi   \di{x}\di{t} \\+\int_{\oll}  \dfrac{\partial u_L^\es(0)}{\partial t}  \phi(0)\di{x}
    = \int_0^T\int_{\oll} f_L\phi \di{x}\di{t}.
\end{multline*}
That is
\begin{multline}\label{eq:omega_l_weakform}
    -\rho_L\int_0^T\int_{\oll}  \dfrac{\partial u_L^\es}{\partial t} \dfrac{\partial \phi}{\partial t} \di{x}\di{t}
    +D_L\int_0^T\int_{\oll}\nabla u_L^\es \cdot \nabla \phi  \di{x}\di{t} \\+\int_{\oll}  v_2  \phi(0)\di{x}
    = \int_0^T\int_{\oll} f_L\phi \di{x}\di{t}.
\end{multline}
Taking the limit $\es\rightarrow 0$, we get
\begin{align*}
    -\rho_L\int_0^T\int_{\Omega_L}  \dfrac{\partial u_L^0}{\partial t} \dfrac{\partial \phi}{\partial t} \di{x}\di{t}
    +D_L\int_0^T\int_{\Omega_L}\nabla u_L^0 \cdot \nabla \phi  \di{x}\di{t} +\rho_L\int_{\Omega_L}  v_2  \phi(0) \di{x}
    = \int_0^T\int_{\Omega_L} f_L\phi\di{x}\di{t}.
\end{align*}
Using integration by parts with respect to the time variable and \eqref{eq:macro_ul} yields to
\begin{align*}
    \rho_L\int_{\Omega_L}  v_2  \phi(0) \di{x}= \rho_L\int_{\Omega_L}\dfrac{\partial u_L^0(0)}{\partial t} \phi(0) \di{x}.
\end{align*}
That is $\dfrac{\partial u_L^0(0)}{\partial t}=v_2$. Similarly, we get  $\dfrac{\partial u_R^0(0)}{\partial t}=0$. To derive $u_L^0(0)$, we again use integration by parts with respect to the time variable on the identity \eqref{eq:omega_l_weakform}, and obtain
\begin{multline*}
    \rho_L\int_0^T\int_{\oll}  u_L^\es \dfrac{\partial^2 \phi}{\partial t^2} \di{x}\di{t}
    +D_L\int_0^T\int_{\oll}\nabla u_L^\es \cdot \nabla \phi  \di{x}\di{t} \\+\rho_L\int_{\oll}  v_2  \phi(0)\di{x}-\rho_L\int_{\oll}  v_1  \dfrac{\partial \phi(0)}{\partial t}\di{x}
    = \int_0^T\int_{\oll} f_L\phi_0 \di{x}\di{t}
\end{multline*}
taking the limit $\es\rightarrow 0$, we have
\begin{multline*}
     \rho_L\int_0^T\int_{\Omega_L}   u_L^0 \dfrac{\partial^2 \phi}{\partial t^2} \di{x}\di{t}
    +D_L\int_0^T\int_{\Omega_L}\nabla u_L^0 \cdot \nabla \phi  \di{x}\di{t} \\+\rho_L\int_{\Omega_L}  v_2  \phi(0) \di{x}-\rho_L\int_0^T\int_{\Omega_L}  u_L^0(0) \dfrac{\partial \phi(0)}{\partial t} \di{x}\di{t}
    = \int_0^T\int_{\Omega_L} f_L\phi \di{x}\di{t}.
\end{multline*}
Rewriting this in terms of the initial condition, we get
\begin{multline*}
     \rho_L\int_0^T\int_{\Omega_L}   u_L^0 \dfrac{\partial^2 \phi}{\partial t^2} \di{x}\di{t}
    +D_L\int_0^T\int_{\Omega_L}\nabla u_L^0 \cdot \nabla \phi  \di{x}\di{t} \\+\rho_L\int_{\Omega_L}  v_2  \phi(0) \di{x}-\rho_L\int_{\Omega_L}  v_1 \dfrac{\partial \phi(0)}{\partial t} \di{x}
    = \int_0^T\int_{\Omega_L} f_L\phi \di{x}\di{t}.
\end{multline*}
Now, again using integration by parts with respect to the time variable, the initial condition $\dfrac{\partial u_L^0(0)}{\partial t}=v_2$  and \eqref{eq:macro_ul}, we get
\begin{align*}
    \rho_L\int_{\Omega_L}  v_1 \dfrac{\partial \phi(0)}{\partial t} \di{x}=\rho_L\int_{\Omega_L}  u_L^0(0) \dfrac{\partial \phi(0)}{\partial t} \di{x}.
\end{align*}
Since $\phi\in C_c^\infty([0,T)\times \Omega_L)$ is arbitrary, we deduce $u_L^0(0)=v_1$. Similarly, we get $u_R^0(0)=0$. 

Let $\phi\in C_c([0,T)\times \Sigma;C_{\sharp}^\infty(Y))$, we have 
\begin{align*}
    \frac{1}{\es}\Mint\frac{\p u_M^\es}{\p t}\phi(t,\bar x,\frac{x}{\es})\di{x}\di{t}&=- \frac{1}{\es}\Mint u_M^\es\frac{\p \phi(t,\bar x,\frac{x}{\es})}{\p t}\di{x}\di{t}+ \frac{1}{\es}\int_{\omm}u_M^\es(0)\phi(0)\di{x}\\
    &=- \frac{1}{\es}\Mint u_M^\es\frac{\p \phi(t,\bar x,\frac{x}{\es})}{\p t}\di{x}\di{t}.
\end{align*}
Letting $\es\rightarrow 0$, yields
\begin{multline*}
    \mint  \left(\dfrac{\partial u_M^0(t,\bar x)}{\partial t}+ \dfrac{\partial u_M^1(t,\overline{x},\tau,y)}{\partial \tau}\right)\phi(t,\bar x,y)\dzdy\\=\mint u_M^0(t,\bar x)\frac{\p \phi(t,\bar x,y)}{\p t}\dzdy.
\end{multline*}
Applying the integration by parts with respect to $\tau$ variable and using the periodic property of  $u_M^1$ with respect to the $\tau$ variable, we arrive at
\begin{multline*}
    \mint  \dfrac{\partial u_M^0(t,x)}{\partial t}\phi(t,\bar x,y)\dzdy\\=\mint u_M^0(t,\bar x)\frac{\p \phi(t,\bar x,y)}{\p t}\dzdy.
\end{multline*}
Eventually, using the integration by parts with respect to the $t$ variable, we obtain $u_M^0(0)=0$.

Finally, to derive the initial condition for $\frac{\p u_M^0}{\p t}(0)$,  we choose $\phi=\phi_0\in C_c([0,T)\times \Omega)$
in the weak form \eqref{eq:weakform} take the limit $\es\rightarrow 0$  to get $\frac{\p u_M^0}{\p t}(0)=0$.

\end{proof}

We now state our main result concerning the effective model and the associated transmission conditions. Using the preceding lemmas, we show that the effective model satisfies the structure of a wave equation in both the bulk regions. As a first transmission condition,  we obtain the continuity of the solution along the effective interface $\Sigma$. More precisely, the traces of the bulk solutions $u_L^0$ and $u_R^0$ coincide with the interfacial limit $u_M^0$ on $\Sigma$. For the second transmission condition, we see a balance law for the jump of the normal fluxes across $\Sigma$. This jump condition is governed by a dynamical equation of $u_M^0$, involving both its time derivative and the tangential gradient. The corresponding effective quantities are characterized through suitable cell problems of elliptic and hyperbolic type.

    \begin{theorem}
   Assume $(A1)-(A3)$ and \eqref{eq:energy_estimate_1_case_1}--\eqref{eq:energy_estimate_3_case_1} hold. Let $u_L^0,u_R^0$ and $u_M^0$ be the limit functions from  \Cref{theorem:convergence in the bulk} and \Cref{theorem:convergence_in_the_layer}. Then, in the sense of distributions, the following effective model holds:
 \begin{subequations}  
\begin{align}
 \dfrac{\partial}{\partial t}\left( {\rho_L}\dfrac{\partial u_L^0}{\partial t}\right)-\mathrm{div}(D_L \nabla u_L^0)&=f_L\qquad &\mbox{in}\qquad (0,T)\times \Omega_L\label{eq:equation_for_u_L}\\
  \dfrac{\partial}{\partial t}\left({\rho_R} \dfrac{\partial u_R^0}{\partial t}\right)-\mathrm{div}(D_R \nabla u_R^0)&=0\qquad &\mbox{in}\qquad (0,T)\times\Omega_R\label{eq:equation_for_u_R}
  \end{align}
  with the effective transmission conditions
  \begin{align}
 {u_R^0=u_L^0}&{=u_M^0}\qquad &\mbox{on}\qquad(0,T)\times\Sigma\label{eq:first_transmission_condition}
\end{align}
and 
\begin{multline}\label{eq:second_transmission_condition}
     {(D_R \nabla u_R^0)\cdot n-(D_L \nabla u_L^0)\cdot n}\\
     =\frac{\p}{\p t}\left(\rho_{1}^*\dfrac{\partial u_M^0}{\partial t}+\rho_{2}^*\cdot \nabla_{\bar x}u_M^0\right)
        -\mathrm{div_{\bar x}}\left(D_1^* \nabla_{\bar x}u_M^0+D_2^*\dfrac{\partial u_M^0}{\partial t}\right)\\
    \qquad \mbox{on}\qquad(0,T)\times\Sigma.
\end{multline}

The effective coefficients are,
  \begin{align*}
        \rho_{1}^*:=\int_\T\int_{Y}\int_Z \rho_M\left(1+\frac{\p h}{\p \tau}\right)\di{z}\di{y}\di{\tau},
    \end{align*}
    \begin{align*}
        {\rho_{2}^*}:=\int_\T\int_{Y}\int_Z\rho_M \begin{bmatrix}
            \frac{\p \psi_1}{\p \tau}\\
            \frac{\p \psi_2}{\p \tau}
        \end{bmatrix}\di{z}\di{y}\di{\tau}
    \end{align*}
    \begin{align*}
        D_1^*:= \int_0^1\int_Y\int_Z D_M\left(I+\begin{bmatrix}
0 & 0 \\
0 & \frac{d w}{dz}
\end{bmatrix}\right)\left(I+\begin{bmatrix}
\frac{\partial \psi_1}{\partial y_1} & \frac{\partial \psi_2}{\partial y_1} \\
\frac{\partial \psi_1}{\partial y_2} & \frac{\partial \psi_2}{\partial y_2}
\end{bmatrix}\right) \di{z}\di{y}\di{\tau},
    \end{align*}
     \begin{align*}
        D_2^*:=\int_\T\int_{Y}\int_Z D_M\left(I+\begin{bmatrix}
0 & 0 \\
0 & \frac{d w}{dz}
\end{bmatrix}\right)\nabla_y h\di{z}\di{y}\di{\tau}
    \end{align*}

      where $w(t,\overline{x},\tau,y,\cdot)\in H^1_{\sharp,0}(Z)$ is the unique weak solution of the $Z$ cell problem
   \begin{align}\label{eq:zcell_problem}
        \frac{d}{dz}\left(D_M\left(1+\frac{dw}{dz}\right)\right)=0\quad\mbox{in}\quad Z,\\
        \qquad 
         w \mbox{ is z-periodic,} \qquad \int_Z w\di{z}=0,
    \end{align}
   
    $h(t,\bar x, \cdot)\in \mathcal{H}_{\sharp,0}(\T,Y)$ is the unique weak solution of the cell problem 
    \begin{align}\label{eq:cellproblemh}
        \frac{\partial}{\partial \tau}\left(\bar \rho_M \frac{\partial h}{\partial \tau}\right)-\mathrm{div}_y\left(\bar D_M\nabla_yh\right)&=-\frac{\partial \bar \rho_M}{\partial \tau}\quad\mbox{in}\quad \T\times Y\\
        \left(\bar D_M\nabla_yh\right)\cdot \nu &=0 \quad\mbox{on}\quad Y_L\cup Y_R,\\
         h \mbox{ is } (\tau,y_2)\mbox{-periodic,}&      \int_\T\int_Yh\di{y}\di{\tau}=0
    \end{align}
and $\psi_1(t,\bar x, \cdot),\psi_2(t,\bar x,\cdot)\in \mathcal{H}_{\mathcal{L}}$ are the unique weak solution of
    \begin{align}\label{eq:cellproblem_psi}
        \frac{\partial}{\partial \tau}\left(\bar \rho_M \frac{\partial \psi_i}{\partial \tau}\right)-\mathrm{div}_y\left(\bar D_M\nabla_y\psi_i\right)&=\mathrm{div}_y\left(\bar D_Me_i\right)\quad\mbox{in}\quad \T\times Y\\
      \left(\bar D_M\nabla_y \psi_i\right)\cdot \nu &=-\left(\bar D_Me_i\right)\cdot \nu \quad\mbox{on}\quad Y_L\cup Y_R,\\
       \psi_i \mbox{ is } (\tau,y_2)\mbox{-periodic,}&\quad  \int_\T\int_Y\psi_i\di{y}\di{\tau}=0
    \end{align}
    and $\bar \rho_M:=\int_Z\rho_M\di{z}$ and $\bar D_M=\begin{bmatrix}
            \int_Z D_M\di{z}&0\\
            0&  \left(\int_Z\frac{1}{ D_M}\di{z}\right)^{-1}\end{bmatrix}$.
    Further, we have the following initial conditions
   \begin{align*}
u_L^0(0) &= v_1, &
u_M^0(0) &= 0, &
u_R^0(0) &= 0, \\
\frac{\partial u_L^0}{\partial t}(0) &= v_2, &
\frac{\partial u_M^0}{\partial t}(0) &= 0, &
\frac{\partial u_R^0}{\partial t}(0) &= 0.
\end{align*}
    \end{subequations}
  
\end{theorem}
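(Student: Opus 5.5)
Most of the statement is already in hand from the preceding lemmas: \eqref{eq:equation_for_u_L}--\eqref{eq:equation_for_u_R} come from Lemma~\ref{lemma:derivation_of_uL_n_uR}, \eqref{eq:first_transmission_condition} is Lemma~\ref{lemma:first_transmission_condition}, and the initial conditions are Lemma~\ref{lemma:initial_condition}. What remains is to turn the implicit jump condition \eqref{eq:transmition_condition_2} into \eqref{eq:second_transmission_condition}. For this we express $u_M^2$ and then $u_M^1$ through $u_M^0$ by means of the cell problems, and then substitute. The plan is a two-level (reiterated) elimination: first the $z$-scale, then the $(\tau,y)$-scale.

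\emph{Step 1 ($z$-scale).} Fix $(t,\bar x,\tau,y)$ and let $\xi:=\partial_{x_2}u_M^0+\partial_{y_2}u_M^1$. The second component of the flux in \eqref{eq:strong_form_for_u_2} reads $D_M(\xi+\tfrac{d}{dz}u_M^2)$, and it is $z$-constant. Uniqueness of mean-zero periodic solutions of the one-dimensional problem \eqref{eq:zcell_problem}, by Lax--Milgram on $H^1_{\sharp,0}(Z)$, gives $u_M^2=\xi\,w$. Hence $\int_Z D_M(\xi+\xi w')\,dz=\big(\int_Z D_M^{-1}\big)^{-1}\xi$, since $D_M(1+w')$ is constant and $\int_Z w'=0$. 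The first component simply averages to $\int_Z D_M$. After $z$-integration, the flux in \eqref{eq:strong_form_u_1}--\eqref{eq:strong_form_u_1_bc} is therefore $\bar D_M(\nabla_{\bar x}u_M^0+\nabla_y u_M^1)$. Likewise $\int_Z\rho_M(\cdot)\,dz=\bar\rho_M(\partial_t u_M^0+\partial_\tau u_M^1)$, because $u_M^1$ does not depend on $z$.

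\emph{Step 2 ($(\tau,y)$-scale).} After Step 1, \eqref{eq:strong_form_u_1}--\eqref{eq:strong_form_u_1_bc} say that $\mathcal L u_M^1=-\partial_\tau\bar\rho_M\,\partial_tu_M^0+\mathrm{div}_y(\bar D_M\nabla_{\bar x}u_M^0)$, with the Neumann data $-(\bar D_M\nabla_{\bar x}u_M^0)\cdot\nu$ on $Y_L\cup Y_R$. Here $\partial_tu_M^0$ and $\nabla_{\bar x}u_M^0$ act as parameters. By linearity, the candidate $\tilde u:=h\,\partial_tu_M^0+\sum_i\psi_i\,\partial_{x_i}u_M^0$ solves the same problem, using \eqref{eq:cellproblemh} and \eqref{eq:cellproblem_psi}. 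Only $i=2$ really matters, since $\nabla_{\bar x}$ has no $e_1$-component. By (A3), $\mathrm{Ker}\,\mathcal L=\mathrm{span}\{1\}$, and since both $u_M^1$ and $\tilde u$ have zero mean, $u_M^1=\tilde u$. Existence of $h$ and $\psi_i$ also follows from (A3): the range is closed, and the right-hand sides annihilate the kernel. For $h$, $\int_\T\int_Y\partial_\tau\bar\rho_M=0$ by periodicity; for $\psi_i$, the divergence and the boundary term cancel. This is the step I expect to be the main obstacle. The hyperbolic operator is not coercive, so uniqueness has to be argued carefully on the weak level, in the class where $u_M^1$ actually lives, namely $L^2((0,T)\times\Sigma;\mathcal H_{\sharp,0})$. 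One also has to check that the parametric solutions $h,\psi_i$ are measurable in $(t,\bar x)$ and that the products with $\partial_tu_M^0$ and $\nabla_{\bar x}u_M^0$ lie in the right spaces; the closed-range assumption should give a bounded inverse on the mean-zero complement, which handles this.

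\emph{Step 3 (substitution).} Insert $u_M^2=\xi w$ and $u_M^1=h\,\partial_tu_M^0+\psi\cdot\nabla_{\bar x}u_M^0$ into the right-hand side of \eqref{eq:transmition_condition_2}.

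\emph{Time term.} We get $\partial_t\big[\int\rho_M(1+\partial_\tau h)\,\partial_tu_M^0+\int\rho_M\,\partial_\tau\psi\cdot\nabla_{\bar x}u_M^0\big]$, which gives $\rho_1^*$ and $\rho_2^*$.

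\emph{Flux term.} The integrand is $D_M(I+\mathrm{diag}(0,w'))\big(\nabla_{\bar x}u_M^0+\nabla_y\psi\,\nabla_{\bar x}u_M^0+\nabla_y h\,\partial_tu_M^0\big)$, since $u_M^2=\xi w$ yields $\nabla_z$-contribution $w'\xi e_2$ and $\xi$ is exactly the second component of $(I+\nabla_y\psi)\nabla_{\bar x}u_M^0+\nabla_yh\,\partial_tu_M^0$. Integrating over $\T\times Y\times Z$ gives $D_1^*\nabla_{\bar x}u_M^0+D_2^*\,\partial_tu_M^0$, with the matrix ordering as in the definition of $D_1^*$.

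This yields \eqref{eq:second_transmission_condition} in the distributional sense, since all manipulations were carried out at the level of the weak form from Lemma~\ref{lemma:derivation_of_uL_n_uR}.
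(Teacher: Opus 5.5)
Your proposal is correct and follows essentially the same route as the paper: you eliminate $u_M^2=\xi w$ via the $Z$-cell problem (producing the harmonic mean in $[\bar D_M]_{22}$), identify $u_M^1=h\,\partial_t u_M^0+\Psi\cdot\nabla_{\bar x}u_M^0$ using the kernel condition in (A3), and substitute into the jump condition of Lemma~\ref{lemma:derivation_of_uL_n_uR}. Your check that the right-hand sides of the $h$ and $\psi_i$ problems vanish on constants gives solvability once combined with closed range and the self-adjointness of $\mathcal{L}$, and it spells out what the paper only asserts.
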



\begin{proof} 
   From \Cref{lemma:derivation_of_uL_n_uR} and \Cref{lemma:first_transmission_condition} we get \eqref{eq:equation_for_u_L}--\eqref{eq:first_transmission_condition}. To show the second transmission condition, we first write $u_M^1$ and $u_M^2$ in terms of $u_M^0$ and solutions of suitable cell problems.
 The structure of the equation \eqref{eq:strong_form_for_u_2} allows us to express $u_M^2$ in terms of $ u_M^0$ and $ u_M^1$, as well as the solution of a cell problem. Precisely, we can write
 
\begin{align*}   u_M^2(t,\overline{x},\tau,y,z)=\Big(\nabla_{\overline{x}}u_M^0(t,\overline{x})+\nabla_y u_M^1(t,\overline{x},\tau,y)\Big)\cdot W(t,\overline{x},\tau,y,z),
\end{align*}
where $W:=(0,w)$ and $w$ is the weak solution to the cell problem \eqref{eq:zcell_problem}.

   Now, we aim to write $u_M^1$ in terms of $u_M^0$ and suitable cell problems in $\T\times Y$. Let 
 \begin{align}\label{eq:DMbar_structuer2}
        \bar D_M(t,\overline{x},\tau,y):=\begin{bmatrix}
            \int_Z D_M(t,\overline{x},\tau,y,z)\di{z}&0\\
            0&\int_Z D_M(t,\overline{x},\tau,y,z)\left(1+\frac{dw(t,\overline{x},\tau,y,z)}{dz}\right)\di{z}
        \end{bmatrix}
    \end{align}
    
     and
     \begin{align*}
         \bar \rho_M(t,\overline{x},\tau,y):=\int_Z \rho_M(t,\overline{x},\tau,y,z) \di{z}.
     \end{align*}
     Then we can write \eqref{eq:strong_form_u_1} in the following simplified form:
     \begin{align}\label{eq:equation_for_u1}
          \frac{\partial}{\partial \tau}\left(\bar \rho_M \frac{\partial u_M^1}{\partial \tau}\right)-\mathrm{div}_y\left(\bar D_M\nabla_yu_M^1\right)&=-\frac{\partial \bar \rho_M}{\partial \tau}\frac{\p u_M^0}{\p t}+\mathrm{div}_y\left(\bar D_M\nabla_{\bar x}u_M^0\right)\quad\mbox{in}\quad (0,T)\times\Sigma\times \T\times Y\\
          \left(\bar D_M\nabla_yu_M^1\right)\cdot n &=-\left(\bar D_M\nabla_{\bar x}u_M^0\right)\cdot n \qquad\mbox{on } Y_L\cup Y_R
     \end{align}
Observe that, from  \eqref{eq:zcell_problem}, we have 
\begin{align*}
    D_M\left(1+\frac{dw}{dz}\right)=p
\end{align*}
for some $p$ independent of the variable $z$. Since $w$ is a $ Z$-periodic function, a simple calculation gives
\begin{align*}
    p=\frac{1}{\int_Z\frac{1}{D_M}\di{z}},
\end{align*}
    and  therefore
     \begin{align*}
         [\bar D_M]_{22}=\left(\int_Z\frac{1}{ D_M}\di{z}\right)^{-1}.
     \end{align*}
   Due to the presence of temporal and spatial derivatives of $u_M^0$ in \eqref{eq:equation_for_u1}, we cannot directly construct the cell problem for $u_M^1$. Since \eqref{eq:equation_for_u1} is a linear problem, we split the equation into two.  Consider the weak solution $v_1$  for the problem
     \begin{align}
          \frac{\partial}{\partial \tau}\left(\bar \rho_M \frac{\partial v_1}{\partial \tau}\right)-\mathrm{div}_y\left(\bar D_M\nabla_yv_1\right)&=-\frac{\partial \bar \rho_M}{\partial \tau}\frac{\p u_M^0}{\p t}\quad&\mbox{in}\quad \T\times Y
          \\
          \left(\bar D_M\nabla_yv_1\right)\cdot n &=0 \qquad&\mbox{on }\quad Y_L\cup Y_R
     \end{align}
   together with the periodic boundary with respect to $\tau$ and $y_2$, and the zero mean condition $\int_0^1\int_{Y}v_1\di{y}\di{\tau=0}$ and the weak solution $v_2$ of the problem
     \begin{align}
          \frac{\partial}{\partial \tau}\left(\bar \rho_M \frac{\partial v_2}{\partial \tau}\right)-\mathrm{div}_y\left(\bar D_M\nabla_yv_2\right)&=\mathrm{div}_y\left(\bar D_M\nabla_{\bar x}u_M^0\right)\qquad&\mbox{in}\quad \T\times Y\\
          \left(\bar D_M\nabla_yv_2\right)\cdot n &=-\left(\bar D_M\nabla_{\bar x}u_M^0\right)\cdot n \qquad&\mbox{on } Y_L\cup Y_R
     \end{align}
     together with the periodic boundary with respect to $\tau$ and $y_2$, and $\int_0^1\int_{Y}v_2\di{y}\di{\tau=0}$.   
Then it follows that, $u_M^1=v_1+v_2$.


     For any fixed $(t,\bar x)\in (0,T)\times\Sigma$, let 
     $h(t,\bar x, \tau, y)$ be the weak solution of the cell problem \eqref{eq:cellproblemh}
and $\psi_1(t,\bar x, \tau, y),\psi_2(t,\bar x, \tau, y)$ are the weak solutions of the problem \eqref{eq:cellproblem_psi}. Then $v_1=\frac{\partial u_M^0}{\partial t}h$ and $v_2:=\nabla_{\bar x}u_M^0\cdot \Psi$.
    
    The  compatibility condition $(A3)$ guarantees the existence and uniqueness of both $h$ and $\Psi$. Therefore, we get 
    \begin{align*}
         u_M^1&=\frac{\partial u_M^0}{\partial t}h+\nabla_{\bar x}u_M^0\cdot \Psi.
    \end{align*}
Using the aforementioned structure, we have
    \begin{align*}
        \int_\T\int_{Y}\int_Z\left(\rho_M\left(\dfrac{\partial u_M^0}{\partial t}+ \dfrac{\partial u_M^1}{\partial \tau}\right)\right)\di{z}\di{y}\di{\tau}=\rho_{1}^*\dfrac{\partial u_M^0}{\partial t}+\rho_{2}^*\cdot \nabla_{\bar x}u_M^0,
    \end{align*}

    where
    \begin{align*}
        \rho_{1}^*:=\int_{\T}\int_{Y}\bar \rho_M\left(1+\frac{\p h}{\p \tau}\right)\di{y}\di{\tau} \qquad   {\rho_{2}^*}\cdot e_i:=\int_{\T}\int_{Y}\bar \rho_M \frac{\p \psi_i}{\p \tau}\di{y}\di{\tau},
    \end{align*}
   
and
   
    \begin{align*}
        \int_\T\int_{Y}\int_ZD_M \left( \nabla_{\overline{x}}u_M^0+\nabla_y u_M^1+\nabla_zu_M^2\right)\di{z}\di{y}\di{\tau}&= \int_{\T}\int_{Y}\bar D_M \left( \nabla_{\overline{x}}u_M^0+\nabla_y u_M^1\right)\di{y}\di{\tau}\\
        &=D_1^* \nabla_{\bar x}u_M^0+D_2^*\dfrac{\partial u_M^0}{\partial t}
    \end{align*}

    where
    \begin{align*}
        D_2^*:=\int_\T\int_{Y}\int_Z D_M\begin{bmatrix}
            1&0\\
            0&1+\frac{dw}{dz}
        \end{bmatrix}\nabla_y h\di{z}\di{y}\di{\tau}
    \end{align*}
    and 
    \begin{align*}
        D_1^*:=\int_\T\int_{Y}\int_Z D_M \begin{bmatrix}
            1&0\\
            0&1+\frac{dw}{dz}
        \end{bmatrix}
       \left( \begin{bmatrix}
            1&0\\
            0&1
        \end{bmatrix}+\nabla_y\Psi\right)\di{z}\di{y}\di{\tau}.
    \end{align*}
    Therefore, the transmission condition \eqref{eq:transmition_condition_2}, can be written as 
    \begin{multline*}
        {(D_R \nabla u_R^0)\cdot n-(D_L \nabla u_L^0)\cdot n}=\frac{\p}{\p t}\left(\rho_{1}^*\dfrac{\partial u_M^0}{\partial t}+\rho_{2}^*\cdot \nabla_{\bar x}u_M^0\right)
        -\mathrm{div_{\bar x}}\left(D_2^*\dfrac{\partial u_M^0}{\partial t}+D_1^* \nabla_{\bar x}u_M^0\right).
    \end{multline*}
The initial conditions follow from \Cref{lemma:initial_condition}.
\end{proof}
\subsection{Uniqueness of the effective model}
In the aforementioned theorem, we established the existence of the effective model. We now look for uniqueness of the model. Even though the effective model is formulated as a linear problem, the presence of nonstandard transmission conditions makes the uniqueness not so obvious. Here, under some additional assumptions on the effective coefficients, we prove the uniqueness of the upscaled model.  For that, we first prove a lemma that shows the symmetric structure of the effective coefficient $D_1^*$.
\begin{lemma}\label{lemma:D1symmetric}
    The effective matrix $D_1^*$ is symmetric.
\end{lemma}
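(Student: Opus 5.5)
The plan is to reduce $D_1^*$ to a $z$-averaged expression involving only $\bar D_M$ and the correctors $\psi_1,\psi_2$, and then exploit the weak formulation of the cell problem \eqref{eq:cellproblem_psi}. This is the classical energy-symmetrization trick, adapted to the hyperbolic cell operator.

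\emph{Step 1: eliminate $z$.} Since $\psi_i$ does not depend on $z$, the factor $I+\nabla_y\Psi$ can be pulled out of the $z$-integral. As shown in the proof of the preceding theorem, $\int_Z D_M\,\mathrm{diag}(1,1+\tfrac{dw}{dz})\di{z}=\bar D_M$, with $[\bar D_M]_{22}=(\int_Z D_M^{-1}\di{z})^{-1}$. Hence
\begin{align*}
(D_1^*)_{ij}=\int_\T\int_Y \bar D_M\,(e_j+\nabla_y\psi_j)\cdot e_i\di{y}\di{\tau},
\end{align*}
where column $j$ of $\nabla_y\Psi$ is $\nabla_y\psi_j$. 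I will use that $\bar D_M$ is diagonal, and in particular symmetric.

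\emph{Step 2: weak form of the cell problem.} Multiply \eqref{eq:cellproblem_psi} by a test function $\varphi\in\mathcal{H}_{\sharp}(\T,Y)$ and integrate by parts in $\tau$ and $y$. The $\tau$-boundary terms and the $y_2$-boundary terms vanish by periodicity. On $Y_L\cup Y_R$ the boundary contribution is $\int \bar D_M(\nabla_y\psi_i+e_i)\cdot\nu\,\varphi$, which vanishes by the Neumann condition in \eqref{eq:cellproblem_psi}. This yields
\begin{align*}
-\int_\T\int_Y \bar\rho_M\,\partial_\tau\psi_i\,\partial_\tau\varphi\di{y}\di{\tau}+\int_\T\int_Y \bar D_M(e_i+\nabla_y\psi_i)\cdot\nabla_y\varphi\di{y}\di{\tau}=0 .
\end{align*}
Since $\psi_j\in\mathcal{H}_{\sharp}(\T,Y)$, it is admissible as $\varphi$.

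\emph{Step 3: symmetrize.} Take the identity of Step 2 with index $j$ and test function $\varphi=\psi_i$. This gives $\int\bar D_M(e_j+\nabla_y\psi_j)\cdot\nabla_y\psi_i=\int\bar\rho_M\,\partial_\tau\psi_j\,\partial_\tau\psi_i$. Adding and subtracting $\nabla_y\psi_i$ in the formula of Step 1 then gives
\begin{align*}
(D_1^*)_{ij}=\int_\T\int_Y \bar D_M(e_j+\nabla_y\psi_j)\cdot(e_i+\nabla_y\psi_i)\di{y}\di{\tau}-\int_\T\int_Y\bar\rho_M\,\partial_\tau\psi_i\,\partial_\tau\psi_j\di{y}\di{\tau}.
\end{align*}
Both terms are symmetric in $(i,j)$ because $\bar D_M$ is symmetric, so $(D_1^*)_{ij}=(D_1^*)_{ji}$.

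The only delicate point is Step 2: it requires carefully checking the sign and boundary conventions so that the Neumann data $-(\bar D_M e_i)\cdot\nu$ exactly cancels the boundary term produced by $\mathrm{div}_y(\bar D_M e_i)$. It also requires that the weak solutions given by (A3) have enough regularity ($H^1$ in $(\tau,y)$) to be used as test functions. This regularity holds by the definition of the solution space, so I expect no genuine obstruction.

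Note that, unlike the elliptic case, the resulting symmetric form is not positive definite, because of the $-\bar\rho_M$ term. This is why the subsequent uniqueness result needs extra assumptions on the effective coefficients.
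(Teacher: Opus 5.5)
Your proposal is correct and follows essentially the same route as the paper: reduce $D_1^*$ to $\int_\T\int_Y \bar D_M(I+\nabla_y\Psi)\di{y}\di{\tau}$, then test the weak form of the cell problem for one corrector with the other, and use the symmetry of $\bar D_M$ and of the scalar $\bar\rho_M$ term. The only difference is bookkeeping: the paper compares the two cross-tested identities to show that $\int_\T\int_Y \bar D_M e_i\cdot\nabla_y\psi_j\di{y}\di{\tau}$ is symmetric in $(i,j)$, whereas you use one of those identities to rewrite $(D_1^*)_{ij}$ as a manifestly symmetric bilinear expression.
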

\begin{proof}
   Using the structure of $\bar D_M$ \eqref{eq:DMbar_structuer2}, and the definition of $D_1^*$, we get
    \begin{align*}
        D_1^*=\int_\T\int_{Y}\bar D_M 
       \left( \begin{bmatrix}
            1&0\\
            0&1
        \end{bmatrix}+\nabla_y\Psi\right)\\di{y}\di{\tau}.
    \end{align*}
    Therefore,
    \begin{align*}
        [D_1^*]_{ij}&=\int_\T\int_{Y}e_i\cdot \bar D_M (e_j+\nabla_y \psi_j)\di{y}\di{\tau}\\
        &=\int_\T\int_{Y}e_i\cdot \bar D_M e_j\di{y}\di{\tau}+\int_\T\int_{Y}e_i\cdot \bar D_M\nabla_y \psi_j\di{y}\di{\tau}.
    \end{align*}
    Since $\bar D_M $ is symmetric, the first term on the right-hand side is symmetric. It remains to show that the second term is symmetric.
    Consider the weak formulation of the problem \eqref{eq:cellproblem_psi} for $\psi_i$, and use the test function $\psi_j$, we obtain
    \begin{align*}
        \int_\T\int_Y\left(\bar De_i\right)\nabla_y\psi_j\di{y}\di{\tau}=\int_\T\int_Y \left(\bar \rho \frac{\partial \psi_i}{\partial \tau}\right)\frac{\partial \psi_j}{\partial \tau}\di{y}\di{\tau}-\int_\T\int_Y\left(\bar D\nabla_y\psi_i\right)\nabla_y\psi_j\di{y}\di{\tau}.
    \end{align*}
    Similarly, taking the weak formulation for $\psi_j$ and testing with $\psi_i$ gives
     \begin{align*}
        \int_\T\int_Y\left(\bar De_j\right)\nabla_y\psi_i\di{y}\di{\tau}=\int_\T\int_Y \left(\bar \rho \frac{\partial \psi_j}{\partial \tau}\right)\frac{\partial \psi_i}{\partial \tau}\di{y}\di{\tau}-\int_\T\int_Y\left(\bar D\nabla_y\psi_j\right)\nabla_y\psi_i\di{y}\di{\tau}.
    \end{align*}
   From the aforementioned identities and since $\bar D_M$ is symmetric, we have
    \begin{align*}
        \int_\T\int_Y\left(\bar De_i\right)\nabla_y\psi_j\di{y}\di{\tau}= \int_\T\int_Y\left(\bar De_j\right)\nabla_y\psi_i\di{y}\di{\tau}.
    \end{align*}
    Therefore we obtain $[D_1^*]_{ij}=[D_1^*]_{ji}$.
\end{proof}

    Next, under suitable additional structural assumptions on the effective coefficients, we establish the uniqueness of the effective model.
\begin{theorem}
     Assume $(A1)-(A3)$ and \eqref{eq:energy_estimate_1_case_1}--\eqref{eq:energy_estimate_3_case_1} hold. Assume further that $\rho_1^*,\rho_2^*,D_1^*,D_2^*
\in W^{1,\infty}\bigl((0,T)\times\Sigma\bigr)$ and $\rho_1^*$ is strictly positive and $D_1^*$ is uniformly positive definite. Then the  effective model admits a unique weak solution.
\end{theorem}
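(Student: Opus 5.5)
By linearity it suffices to show that the only weak solution of the effective model with $f_L=0$, $v_1=v_2=0$ is the trivial one. First I would fix a weak formulation of the coupled system. The natural space is
$$V:=\{(u_L,u_R)\in \mathcal{H}_\sharp(\Omega_L)\times\mathcal{H}_\sharp(\Omega_R):\ u_L|_\Sigma=u_R|_\Sigma=:u_M\in H^1_\sharp(\Sigma)\},$$
which encodes \eqref{eq:first_transmission_condition}. Testing the bulk equations with $\phi\in V$ and integrating by parts in space, the boundary terms on $\Sigma$ combine exactly into the left-hand side of \eqref{eq:second_transmission_condition}; here $n$ points from $\Omega_L$ to $\Omega_R$. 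Substituting the right-hand side of \eqref{eq:second_transmission_condition} gives a single variational identity. Its bilinear forms are the bulk terms $\rho_i\partial_t u_i\,\partial_t\phi$ and $D_i\nabla u_i\cdot\nabla\phi$, together with the interface terms
$$\int_\Sigma\bigl(\rho_1^*\partial_t u_M+\rho_2^*\cdot\nabla_{\bar x}u_M\bigr)\partial_t\phi\quad\text{and}\quad\int_\Sigma\bigl(D_1^*\nabla_{\bar x}u_M+D_2^*\partial_t u_M\bigr)\cdot\nabla_{\bar x}\phi .$$

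The core step is an energy identity for the difference $u$ of two solutions. Formally, I test with $\phi=\mathds 1_{(0,t)}\partial_t u$ and consider
$$E(t):=\sum_{i\in\{L,R\}}\tfrac12\bigl(\rho_i\|\partial_t u_i\|^2_{L^2(\Omega_i)}+D_i\|\nabla u_i\|^2_{L^2(\Omega_i)}\bigr)+\tfrac12\int_\Sigma\rho_1^*|\partial_t u_M|^2+\tfrac12\int_\Sigma D_1^*\nabla_{\bar x}u_M\cdot\nabla_{\bar x}u_M .$$
The terms are handled as follows.
\begin{enumerate}
\item \emph{The $\rho_1^*$ term.} It produces $\frac{d}{dt}$ of its energy contribution, plus the remainder $\frac12\int_\Sigma\partial_t\rho_1^*|\partial_t u_M|^2$.
\item \emph{The $D_1^*$ term.} Here we use Lemma~\ref{lemma:D1symmetric}: because $D_1^*$ is symmetric, $D_1^*\nabla u_M\cdot\nabla\partial_t u_M=\frac12\frac{d}{dt}(D_1^*\nabla u_M\cdot\nabla u_M)-\frac12\partial_t D_1^*\nabla u_M\cdot\nabla u_M$.
\item \emph{The cross term with $\rho_2^*$.} Expanding $\partial_t(\rho_2^*\cdot\nabla u_M)\,\partial_t u_M$ gives $(\partial_t\rho_2^*\cdot\nabla u_M)\partial_t u_M$ plus $\frac12\rho_2^*\cdot\nabla_{\bar x}(\partial_t u_M)^2$. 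Integrating the latter by parts along the periodic interface $\Sigma$ leaves $-\frac12\operatorname{div}_{\bar x}\rho_2^*\,(\partial_t u_M)^2$.
\item \emph{The cross term with $D_2^*$.} In the same way, $D_2^*\partial_t u_M\cdot\nabla\partial_t u_M=-\frac12\operatorname{div}_{\bar x}D_2^*(\partial_t u_M)^2$ after integration by parts.
\end{enumerate}
By the assumption $\rho_1^*,\rho_2^*,D_1^*,D_2^*\in W^{1,\infty}$, every remainder is bounded by $C\bigl(\|\partial_t u_M\|^2_{L^2(\Sigma)}+\|\nabla_{\bar x}u_M\|^2_{L^2(\Sigma)}\bigr)$. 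Since $\rho_1^*\ge c>0$ and $D_1^*$ is uniformly positive definite, this is at most $C\,E(t)$. Hence $E(t)\le E(0)+C\int_0^tE$ with $E(0)=0$, and Grönwall gives $E\equiv0$. Then $u\equiv0$ follows from the zero initial values, using the fundamental theorem of calculus as in the proof of Lemma~\ref{lemma:energy_estimate_for_case_1}.

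The main obstacle is justifying the test function $\partial_t u$, since a weak solution only has $\partial_t u\in L^2$, and on $\Sigma$ only $\partial_t u_M\in L^2$. I would first use Ladyzhenskaya's trick: test with $\phi(s)=\int_s^t u(r)\,dr$ for $s<t$ and $\phi=0$ otherwise, which lies in the admissible class. This yields an identity of the same type for $U(s)=\int_0^s u$, with lower-order remainders coming from the time dependence of the coefficients. The cross terms involving $\rho_2^*$ and $D_2^*$ must then be rearranged by integrating by parts in $s$ and in $\bar x$, which is where the $W^{1,\infty}$ regularity is used again. If that bookkeeping becomes unwieldy, the fallback is Steklov averaging (time mollification) of the solution. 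One then passes to the limit in the commutator terms, which are controlled by the Lipschitz bounds on the effective coefficients.
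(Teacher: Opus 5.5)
Your formal computation with $\phi=\mathds{1}_{(0,t)}\partial_t u$ is correct term by term, including the use of Lemma~\ref{lemma:D1symmetric} and the integrations by parts along $\Sigma$ for the $\rho_2^*$ and $D_2^*$ cross terms. As you note, however, it cannot be carried out for weak solutions. It needs $\partial_t u_M\in L^2(0,T;H^1_\sharp(\Sigma))$, $\partial_t u_i\in L^2(0,T;H^1(\Omega_i))$ and pointwise-in-time values of $\partial_t u$, and the effective model provides none of these. The justification you defer to is testing the difference $w$ with $V^r(s)=\int_s^r w(\sigma)\di{\sigma}$ for $s\le r$ and $V^r=0$ afterwards. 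That is exactly the paper's proof, so in the end the two routes coincide. What is missing is that step itself, and it is not merely ``an identity of the same type'' to which your Grönwall argument applies verbatim.

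Concretely, with $\Theta(r):=\int_0^r\nabla w$, the energy sits one level lower:
$E(r)=\tfrac12\bigl[\sum_{i\in\{L,R\}}\bigl(\rho_i\|w_i(r)\|^2+D_i\|\Theta_i(r)\|^2\bigr)+\int_\Sigma\rho_1^*(r)|w_M(r)|^2+\int_\Sigma D_1^*(0)\Theta_M(r)\cdot\Theta_M(r)\bigr]$.
The $\rho_1^*$, $\rho_2^*$ and $\operatorname{div}_{\bar x}D_2^*$ contributions leave remainders bounded by $C\int_0^r\|w_M\|^2_{L^2(\Sigma)}$. The remainders from $\partial_tD_1^*$ and $\partial_tD_2^*$ behave differently. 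The $\partial_tD_2^*$ term arises after integrating by parts in $s$, with vanishing boundary terms since $w_M(0)=0$ and $V^r(r)=0$. Both of these remainders contain $\nabla_{\bar x}V^r_M(s)=\Theta_M(r)-\Theta_M(s)$. So the endpoint value reappears on the right, and you only get $E(r)\le C\int_0^rE+Cr\,E(r)$.

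You must therefore:
\begin{itemize}
\item take $r$ with $Cr\le\frac12$ and absorb the last term;
\item apply Grönwall on $[0,r]$;
\item restart from $r$, where $w$ vanishes on $[0,r]$, and iterate finitely many times to cover $[0,T]$.
\end{itemize}
With this the bookkeeping is short. The Steklov-averaging fallback, which would need Friedrichs-type commutator estimates for the time-dependent coefficients, is unnecessary.

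The trade-off is as follows. Your higher-level energy closes by a plain Grönwall argument, because all remainders are evaluated at the same time, but it requires regularity you do not have. The paper's lower-level energy works for weak solutions, at the cost of the short-time iteration.
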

\begin{proof}
    We show the uniqueness of the effective model by assuming there are two solutions to the effective model, and then showing that these solutions coincide. 

    Let $(u_L,u_M,u_R)$ and $(v_L,v_M,v_R)$ be two weak solutions to the effective model \eqref{eq:equation_for_u_L}--\eqref{eq:second_transmission_condition}. Set $(w_L,w_M,w_R):=(u_L,u_M,u_R)-(v_L,v_M,v_R)$, taking the weak formulation of both solution sets with the same test function and subtracting the two weak forms gives
    \begin{multline}\label{eq:weak_form_of_difference}
        -\rho_L\int_0^T\int_{\mol}\frac{\p w_L}{\p t}\frac{\p \phi_L}{\p t}\di{x}\di{t}+D_L\int_0^T\int_{\mol}\nabla w_L\nabla \phi_L\di{x}\di{t}-\rho_R\int_0^T\int_{\mor}\frac{\p w_R}{\p t}\frac{\p \phi_R}{\p t}\di{x}\di{t}\\
        +D_R\int_0^T\int_{\mor}\nabla w_R\nabla \phi_R\di{x}\di{t}
        -\int_0^T\int_\Sigma \left(\rho_{1}^*\dfrac{\partial w_M}{\partial t}+\rho_{2}^*\cdot \nabla_{\bar x}\frac{\p w_M}{\p t}\right)\frac{\p w_M}{\p t}\di{\bar x}\di{t}
        \\ +\int_0^T\int_{\Sigma}\left(D_1^* \nabla_{\bar x}w_M+D_2^*\dfrac{\partial w_M}{\partial t}\right)\nabla_{\bar x}\phi_M\di{\bar x}\di{t}=0
    \end{multline} 
    for all $\phi\in L^2((0,T);H^1_{\sharp}(\Omega))\cap H^1((0,T);L^2(\Omega))$ with $\Phi(T)=0$.
Let $r\in (0,T]$, we define 
\begin{align*}
    V^r (t):=\begin{cases}
        \int_t^r w(s)\di{s} \qquad &t\in [0,r]\\
        0\qquad &r\leq t\leq T.
    \end{cases}
\end{align*}
Then clearly, $V^r\in L^2((0,T);H^1_{\sharp}(\Omega))\cap H^1((0,T);L^2(\Omega))$ and $\frac{\p V^r}{\p t}=-w$. Choosing $\phi=V^r$ in \eqref{eq:weak_form_of_difference}, we arrive at
\begin{multline}\label{eq:testing_with_Vr}
      \rho_L\int_0^r\int_{\mol}\frac{\p w_L}{\p t}w_L\di{x}\di{t}+D_L\int_0^r\int_{\mol}\nabla w_L\nabla V^r_L\di{x}\di{t}+\rho_R\int_0^r\int_{\mor}\frac{\p w_R}{\p t}w_R\di{x}\di{t}\\
        +D_R\int_0^r\int_{\mor}\nabla w_R\nabla V^r_R\di{x}\di{t}
        +\int_0^r\int_\Sigma \left(\rho_{1}^*\dfrac{\partial w_M^0}{\partial t}+\rho_{2}^*\cdot \nabla_{\bar x}w_M\right)w_M\di{\bar x}\di{t}
        \\
        +\int_0^r\int_{\Sigma}\left(D_1^* \nabla_{\bar x}w_M+D_2^*\dfrac{\partial w_M}{\partial t}\right)\nabla_{\bar x}V^r_M\di{\bar x}\di{t}=0.
\end{multline}
Since $w(0)=0,$ by integration by parts with respect to the time variable, we obtain the following identities
\begin{align*}
    \rho_L\int_0^r\int_{\mol}\frac{\p w_L}{\p t}w_L\di{x}\di{t}=\frac{\rho_L}{2}\|w_L(r)\|_{L^2(\mol)}^2, \qquad \rho_R\int_0^r\int_{\mor}\frac{\p w_R}{\p t}w_R\di{x}\di{t}=\frac{\rho_R}{2}\|w_R(r)\|_{L^2(\mor)}^2,
    \end{align*}
    and
    \begin{align*}
    D_L\int_0^r\int_{\mol}\nabla w_L\nabla V^r_L\di{x}\di{t}=\frac{D_L}{2}\|\nabla V^r_L(0)\|_{L^2(\mol)}^2 \\
    D_R\int_0^r\int_{\mol}\nabla w_L\nabla V^r_R\di{x}\di{t}=\frac{D_R}{2}\|\nabla V^r_R(0)\|_{L^2(\mor)}^2.
\end{align*}
For the interface terms, using integration by parts with respect to the space variable yields
\begin{align*}
    \int_0^r\int_\Sigma \rho_{1}^*\dfrac{\partial w_M^0}{\partial t} w_M\di{\bar x}\di{t}&=\frac{1}{2}\int_\Sigma \rho_{1}^*(r)|w_M(r)|^2\di{\bar x}-\frac12 \int_0^r\int_\Sigma \frac{\p \rho_{1}^*}{\p t} |w_M|^2\di{\bar x}\di{t}\\
    \int_0^r\int_\Sigma \rho_{2}^*\cdot \nabla_{\bar x}w_M w_M\di{\bar x}\di{t}&=-\frac{1}{2}\int_0^r \int_{\Sigma} \nabla_{\bar x}\cdot \rho_{2}^*|w_M|^2\di{\bar x}\di{t}.
\end{align*}
By \Cref{lemma:D1symmetric}, we have $D_1^*$ is symmetric,  using $\nabla_{\bar x} w_M=-\frac{\p }{\p t}(\nabla_{\bar x}V_M^r)$, we get 
\begin{multline*}
    \int_0^r\int_{\Sigma}D_1^* \nabla_{\bar x}w_M\nabla_{\bar x}V^r_M\di{\bar x}\di{t}=\frac{1}{2}\int_\Sigma D_1^*(0)\nabla_{\bar x}V_M^r(0)\cdot \nabla_{\bar x}V_M^r(0)\di{\bar x}\\
    +\frac12  \int_0^r\int_{\Sigma}\frac{\p D_1^*}{\p t} \nabla_{\bar x}V_M^r\cdot \nabla_{\bar x}V_M^r\di{\bar x}\di{t},
\end{multline*}
and
\begin{align*}
    \int_0^r\int_{\Sigma}D_2^*\dfrac{\partial w_M}{\partial t}\nabla_{\bar x}V^r_M\di{\bar x}\di{t}=-\int_0^r\int_{\Sigma}\frac{\p D_2^*}{\p t}w_M \cdot \nabla_{\bar x}V^r_M \di{\bar x}\di{t}-\frac12 \int_0^r\int_{\Sigma}\nabla_{\bar x}\cdot D_2^*|w_M|^2\di{\bar x}\di{t}.
\end{align*}
Now setting
\begin{align*}
    \Theta_L(t):=\int_0^t\nabla w_L (s) \di{s},\qquad  \Theta_M(t):=\int_0^t\nabla_{\bar x} w_M (s) \di{s},\qquad  \Theta_R(t):=\int_0^t\nabla w_R (s) \di{s},
\end{align*}
we have
\begin{align*}
    \Theta_L(r)=\nabla V^r_L(0), \qquad  \Theta_M(r)=\nabla_{\bar x}V^r_M(0), \qquad  \Theta_R(r)=\nabla V^r_R(0).
\end{align*}
Let 
\begin{multline*}
    E(r):=\frac12\Bigg[\rho_L\|w_L(r)\|_{L^2(\mol)}^2+\rho_R\|w_R(r)\|_{L^2(\mor)}^2+ D_L\| \Theta_L(r)\|_{L^2(\mol)}^2+D_R\|\Theta_R(r)\|_{L^2(\mor)}^2\\
    +\int_\Sigma \rho_{1}^*(r)|w_M(r)|^2\di{\bar x}+\int_\Sigma D_1^*(0)\Theta_M(r)\cdot \Theta_M(r)\di{\bar x}\Bigg]
\end{multline*}
Using \eqref{eq:testing_with_Vr} and the preceding identities, we get
\begin{multline*}
    E(r)=\frac12\Bigg[ \int_0^r\int_\Sigma \frac{\p \rho_{1}^*}{\p t} |w_M|^2\di{\bar x}\di{t}+\int_0^r \int_{\Sigma} \nabla_{\bar x}\cdot \rho_{2}^*|w_M|^2\di{\bar x}\di{t}- \int_0^r\int_{\Sigma}\frac{\p D_1^*}{\p t} \nabla_{\bar x}V_M^r\cdot \nabla_{\bar x}V_M^r\di{\bar x}\di{t}\\
    +2\int_0^r\int_{\Sigma}\frac{\p D_2^*}{\p t}w_M \cdot \nabla_{\bar x}V^r_M \di{\bar x}\di{t}+ \int_0^r\int_{\Sigma}\nabla_{\bar x}\cdot D_2^*|w_M|^2\di{\bar x}\di{t}\Bigg].
\end{multline*}
Applying the boundedness of the coefficients and Young's inequality yields
\begin{align*}
    E(r)\leq C \int_0^r\Big(\|w_M(t)\|_{L^2(\Sigma)}^2+\|\nabla_{\bar x}V_M^r(t)\|_{L^2(\Sigma)}^2\Big)\di{t}.
\end{align*}
Since we have $\nabla_{\bar x}V_M^r(t)=\Theta_M(r)-\Theta_M(t)$, we get
\begin{align*}
    \|\nabla_{\bar x}V_M^r(t)\|_{L^2(\Sigma)}^2\leq 2\|\Theta_M(r)\|_{L^2(\Sigma)}^2+2\|\Theta_M(t)\|_{L^2(\Sigma)}^2,
\end{align*}
and 
\begin{align*}
    E(r)\leq C \int_0^r\Big(\|w_M(t)\|_{L^2(\Sigma)}^2+\|\Theta_M(t)\|_{L^2(\Sigma)}^2\Big)\di{t}+Cr\|\Theta_M(r)\|_{L^2(\Sigma)}^2
\end{align*}
From the ellipticity property of $D_1^*$ and the uniform positivity of $\rho_1^*$, and from the definition of $E(\cdot)$, we obtain
\begin{align*}
    E(t)\geq C\Big(\|w_M(t)\|_{L^2(\Sigma)}^2+\|\Theta_M(t)\|_{L^2(\Sigma)}^2\Big),
\end{align*}
for some constant $C>0.$

Therefore we get
\begin{align*}
    E(r)&\leq C\int_0^rE(t)\di{t}+Cr\|\Theta_M(r)\|_{L^2(\Sigma)}^2\\
    &\leq C\int_0^rE(t)\di{t}+Cr\, E(r).
\end{align*}
Choose $r$ small enough such that $Cr\leq\frac12$, then we get
\begin{align*}
    E(r)&\leq C\int_0^rE(t)\di{t}.
\end{align*}
Now, applying the Gronwall's inequality, we deduce $E(t)=0$ in $[0,r].$ Therefore, we conclude $w=0$ on $[0,r].$ Since $w(r)=0,$ the same argument can be applied to $[r,2r], [2r,3r]$ and so on. After finitely many steps, these intervals cover $[0,T]$, and hence $w=0$ on $[0,T]$.
\end{proof}

We remark that, since the solution of the effective model is unique, the convergence results presented in \Cref{theorem:convergence in the bulk,theorem:convergence_in_the_layer,theorem: convergence in the layer left} hold for the whole sequence.

	\appendix
	\section{Appendix}
    In this Appendix, we provide the proofs of \Cref{theorem:compactness_result} and \Cref{lemma:first_transmission_condition}, along with two auxiliary lemmas. These results follow arguments similar to \cite[Proposition 4.2]{neuss2007effective} and \cite[Theorem 5.3]{effectiveapratim}, with minor modifications to accommodate the multiscale setting and the effects of dimension reduction.  For the reader’s convenience, we provide the proof here.

We first state a multi-scale oscillation lemma for thin layer and later use this to establish \Cref{theorem:compactness_result}. 
\begin{lemma}[Multi-scale oscillation lemma for thin layer]\label{lemma:oscillation_lemma}
	Let \\$\phi\left(t,\overline{x},\dfrac{t}{\es}, \dfrac{x}{\es},\dfrac{x_2}{\es^2}\right)\in L^2((0,T)\times \Sigma ;\mathcal{C}_{\sharp}(\overline{\T\times Y\times Z}))$ then we have
	\begin{itemize}
	    \item [(i)]\begin{align*}
	       \left| \dfrac{1}{\es}\int_0^T\int_{\omm}\phi\left(t,\overline{x},\dfrac{t}{\es}, \dfrac{x}{\es},\dfrac{x_2}{\es^2}\right)\di{x}\di{t}\right|\leq  \int_0^T\int_{\Sigma}\sup_{(\tau,y,z)\in {\T\times Y\times Z}} |\phi(t,\overline{x},\tau,y,z)| \di{\overline{x}}\di{t},
	    \end{align*}
	    \item[(ii)]
	    \begin{align*}
	        \lim_{\es\rightarrow 0}  \dfrac{1}{\es}\int_0^T\int_{\omm}\phi\left(t,\overline{x},\dfrac{t}{\es}, \dfrac{x}{\es},\dfrac{x_2}{\es^2}\right)\di{x}\di{t}= \mint\phi(t,\overline{x},\tau,y,z) \di{z}\di{y}\di{\tau}\di{\overline{x}}\di{t}.
	    \end{align*}
	\end{itemize}

	\end{lemma}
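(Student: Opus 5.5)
Part (i) is a direct cell-by-cell estimate. I will rescale in the normal direction by writing $x=(\es y_1,x_2)$ with $y_1\in(-\tfrac12,\tfrac12)$, so that $\di{x}=\es\di{y_1}\di{x_2}$. This turns the prefactor $\tfrac1\es$ into a plain integral over $(0,T)\times(-\tfrac12,\tfrac12)\times(0,h)$. For fixed $(t,x_2)$, the integrand $\phi(t,\bar x,\tfrac t\es,y_1,\tfrac{x_2}\es,\tfrac{x_2}{\es^2})$ is bounded in absolute value by $\sup_{(\tau,y,z)}|\phi(t,\bar x,\tau,y,z)|$. Integrating in $y_1$ over a set of unit measure then gives exactly the right-hand side of (i). Measurability of the supremum follows because $\phi$ takes values in the separable space $\mathcal{C}_{\sharp}(\overline{\T\times Y\times Z})$.

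For (ii) I will first treat test functions of product form, $\phi=\alpha(t)\beta(\bar x)\,a(\tau)\,b(y)\,c(z)$, where $\alpha,\beta$ are continuous and $a,b,c$ are continuous and periodic (in $\tau$, in $y_2$, and in $z$ respectively). After the same rescaling the expression becomes
\begin{align*}
\int_0^T\alpha(t)a(\tfrac t\es)\di{t}\;\cdot\;\int_{-1/2}^{1/2}\int_0^h \beta(\bar x)\, b(y_1,\tfrac{x_2}{\es})\,c(\tfrac{x_2}{\es^2})\di{x_2}\di{y_1}.
\end{align*}
The time factor converges to $\int_0^T\alpha\di{t}\int_\T a\di{\tau}$ by the classical Riemann--Lebesgue averaging lemma for periodic functions. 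The spatial factor needs reiterated averaging of the two scales $\tfrac{x_2}\es$ and $\tfrac{x_2}{\es^2}$. I plan to split $(0,h)$ into the $N$ cells of length $\es$. On each cell I replace $\beta$ by its value at a cell point, which costs $o(1)$ by uniform continuity. I then substitute $x_2=\es(k+y_2)$, which gives $\int_0^1 b(y_1,y_2)c(\tfrac{k+y_2}{\es})\di{y_2}$. Here $\tfrac1\es$ is the fast frequency, so this integral tends to $\int_0^1 b\di{y_2}\int_Z c\di{z}$ uniformly in $k$. This is the step I expect to be the main obstacle: the convergence must be uniform over the cell index $k$. I will obtain it by approximating $b$ uniformly by step functions (or functions of the form $\sum b_j\mathds 1$) in $y_2$, for which the inner integral is an explicit average of $c$ over intervals of length $\gg\es$. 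Summing over $k$ with weight $\es\beta$ then recovers the Riemann sum of $\int_\Sigma\beta$.

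By linearity the convergence extends to finite sums of product functions. These are dense in $C(\overline{(0,T)\times\Sigma};\mathcal{C}_\sharp)$ by Stone--Weierstrass, and that space is dense in $L^2((0,T)\times\Sigma;\mathcal C_\sharp)$ (indeed in $L^1(\cdots;\mathcal C_\sharp)$). For a general $\phi$ I pick an approximant $\phi_\delta$ and estimate the difference using (i), which controls the $\es$-side by $\|\phi-\phi_\delta\|_{L^1((0,T)\times\Sigma;\mathcal C_\sharp)}$ uniformly in $\es$. The limit side is controlled by the same quantity trivially. A standard $\delta/3$ argument then concludes (ii).
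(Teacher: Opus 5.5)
Your proposal is correct. Part (i) is the same argument as the paper's: rescale $x_1=\es y_1$ and bound the integrand by the $\mathcal{C}_\sharp$-norm.

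For (ii) the routes differ. The paper also rescales $x_1=\es y_1$, but then simply invokes the classical periodic oscillation lemma of Cioranescu--Donato on the rescaled integrand and rearranges the integrals. You instead prove the multiscale averaging statement directly:
\begin{itemize}
\item you use factorized test functions $\alpha\beta\,a\,b\,c$;
\item the time factor follows from one-dimensional periodic averaging;
\item for the nested scales $x_2/\es$, $x_2/\es^2$ you decompose $(0,h)$ into cells, with an estimate uniform in the phase $k/\es \bmod 1$;
\item Stone--Weierstrass, together with (i) used as an $\es$-uniform bound in $L^1((0,T)\times\Sigma;\mathcal{C}_\sharp)$, then extends the result to general $\phi$.
\end{itemize}

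The paper's version is two lines. However, the cited lemma concerns a single fast variable $x/\es$ with no slow dependence. It does not literally cover two separated fast scales in the same variable $x_2$, nor Bochner-type dependence on $(t,\bar x)$. Your argument is longer but self-contained, and it isolates exactly where scale separation ($\es^2\ll\es$) and the bound (i) are needed.

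One small point: your separability remark gives measurability of $(t,\bar x)\mapsto\sup|\phi(t,\bar x,\cdot)|$. What (i) and your density step also need is measurability of the composite $(t,x)\mapsto\phi(t,\bar x,t/\es,x/\es,x_2/\es^2)$. This follows by approximating $\phi$ a.e. by simple $\mathcal{C}_\sharp$-valued functions; the paper handles it by citing a separate lemma.
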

	\begin{proof}
	    
	\begin{itemize}
	    \item [(i)] Since $\phi(t,x,\cdot,\cdot,\cdot)$ is periodic and continuous, we have
	    \begin{align*}
	         \dfrac{1}{\es}\left|\int_0^T\int_{\omm}\phi\left(t,\overline{x},\dfrac{t}{\es}, \dfrac{x}{\es},\dfrac{x_2}{\es^2}\right)\di{x}\di{t}\right|&\leq \dfrac{1}{\es}\es \int_0^T\int_{\Sigma}\sup_{(\tau,y,z)\in {\T\times Y\times Z}} |\phi(t,\overline{x},\tau,y,z)| \di{\overline{x}}\di{t}\\
	         &=\int_0^T\int_{\Sigma}\sup_{(\tau,y,z)\in {\T\times Y\times Z}} |\phi(t,\overline{x},\tau,y,z)| \di{\overline{x}}\di{t}.
	    \end{align*}
	    Note that to ensure the above inequality, we used the fact $\phi\left(t,\overline{x},\dfrac{t}{\es}, \dfrac{x}{\es},\dfrac{x_2}{\es^2}\right)$ is measurable, this measurability follows immediately from Lemma 3.2.2 of \cite{bhattacharya2023homogenization}.
	    \item[(ii)] We have 
	    \begin{align*}
	         \dfrac{1}{\es}\int_0^T\int_{\omm}\phi\left(t,\overline{x},\dfrac{t}{\es}, \dfrac{x}{\es},\dfrac{x_2}{\es^2}\right)\di{x}\di{t}&= \dfrac{1}{\es}\int_0^T\int_{-\frac{\es}{2}}^{\frac{\es}{2}}\int_{0}^h\phi\left(t,\overline{x},\dfrac{t}{\es}, \dfrac{x_1}{\es},\dfrac{x_2}{\es},\dfrac{x_2}{\es^2}\right)\di{x_1}\di{x_2}\di{t}.
	    \end{align*}
	    Using the change of variable $x_1=\es y_1$, we obtain
	        \begin{align*}
	         \dfrac{1}{\es}\int_0^T\int_{\omm}\phi\left(t,\overline{x},\dfrac{t}{\es}, \dfrac{x}{\es},\dfrac{x_2}{\es^2}\right)\di{x}\di{t}&= \int_0^T\int_{-\frac{1}{2}}^{\frac{1}{2}}\int_{0}^h\phi\left(t,\overline{x},\dfrac{t}{\es}, {y_1},\dfrac{x_2}{\es},\dfrac{x_2}{\es^2}\right)\di{y_1}\di{x_2}\di{t}
	    \end{align*}
	    Passing $\es\rightarrow 0$ and using the oscillation Lemma (see \cite[Theorem 2.6]{cioranescu1999introduction}), we get
	    \begin{multline*}
	        \lim_{\es\rightarrow 0} \dfrac{1}{\es}\int_0^T\int_{\omm}\phi\left(t,\overline{x},\dfrac{t}{\es}, \dfrac{x}{\es},\dfrac{x_2}{\es^2}\right)\di{x}\di{t}\\= \int_0^T\int_{-\frac{1}{2}}^{\frac{1}{2}}\int_{0}^h\int_\T\int_0^1\int_Z\phi\left(t,\overline{x},\tau, {y_1},y_2,z\right)\di{z}\di{y_1}\di{y_2}\di{\tau}\di{x_2}\di{t}.
	    \end{multline*}
	    By rearranging the integrals, we have
	    \begin{multline*}
	        \lim_{\es\rightarrow 0} \dfrac{1}{\es}\int_0^T\int_{\omm}\phi\left(t,\overline{x},\dfrac{t}{\es}, \dfrac{x}{\es},\dfrac{x_2}{\es^2}\right)\di{x}\di{t}\\= \mint \phi(t,\overline{x},\tau,y,{z}) \di{z}\di{y}\di{\tau}\di{\overline{x}}\di{t}.
	    \end{multline*}
	\end{itemize}
	\end{proof}

We now give the proof of \Cref{theorem:compactness_result}. The proof follows in similar lines to \cite[Proposition 4.2]{neuss2007effective}. The only change is to extend the proof to higher-order oscillations.
    \begin{proof}[Proof of \Cref{theorem:compactness_result}]

	    Let $F_{\es}: C([0,T]\times\overline{\Sigma};C_{\sharp}( \overline{\T\times Y\times Z}))\rightarrow \mathbb{R}$ defined as
	    \begin{align*}
	        F_\es (\phi):=\dfrac{1}{\es} \int_0^T\int_{\omm} v^\es (t,x)\phi\left(t,\overline{x},\dfrac{t}{\es}, \dfrac{x}{\es},\dfrac{x_2}{\es^2}\right)\di{x}\di{t}
	    \end{align*}
	    By Cauchy–Schwarz inequality, \Cref{lemma:oscillation_lemma} and \eqref{eq:l2bound_on_v_epsilon}, we have 
	    \begin{align*}
	        |F_\es (\phi)|&\leq\frac{1}{\sqrt{\es}}\|v^\es\|_{L^2(\omm)}\left(\frac{1}{\es}\int_0^T\int_{\omm}\phi\left(t,\overline{x},\dfrac{t}{\es}, \dfrac{x}{\es},\dfrac{x_2}{\es^2}\right)\di{x}\di{t}\right)^\frac12 \\
            &\leq C \|\phi\|_{C([0,T]\times\overline{\Sigma};C_{\sharp}( \overline{\T\times Y\times Z}))}.
	    \end{align*}
	 Now using Banach–Alaoglu theorem \cite[Theorem 3.15]{rudin1991functional}, we get a subsequence of $F_{\es}$ (again denoted by $F_{\es}$) such that $F_{\es}\overset{\ast}{\rightharpoonup} F_0$, where $F_0$ is an element of dual space of $C([0,T]\times\overline{\Sigma};C_{\sharp}( \overline{\T\times Y\times Z}))$. 
	 
	 \begin{align*}
	     F_{0}(\phi)&=\lim_{\es\rightarrow 0} F_{\es}(\phi)\\
	     &\leq C  \lim_{\es\rightarrow 0}  \dfrac{1}{\es}\int_0^T\int_{\omm}\phi\left(t,\overline{x},\dfrac{t}{\es}, \dfrac{x}{\es},\dfrac{x_2}{\es^2}\right)\di{x}\di{t}\\
	     &=\mint \phi(t,\overline{x},\tau,y,z) \dzdy.
	 \end{align*}
	 	Since $C([0,T]\times\overline{\Sigma};C_{\sharp}( \overline{\T\times Y\times Z}))$ is dense in $L^2((0,T)\times \Sigma\times {\T\times Y\times Z})$ (see \cite[Lemma 4.3]{neuss2007effective}),  for any $\phi\in L^2((0,T)\times \Sigma\times {\T\times Y\times Z})$ there exists a $v_0\in L^2((0,T)\times \Sigma\times \T\times Y\times Z)$ such that
	 	\begin{align*}
	 	    \lim_{\es\rightarrow 0} \dfrac{1}{\es} \int_0^T\int_{\omm} v^\es &(t,x)\phi\left(t,\overline{x},\dfrac{t}{\es}, \dfrac{x}{\es},\dfrac{x_2}{\es^2}\right)\di{x}\di{t}= F_0(\phi)\\
	 	    &=\mint v^0(t,\bar x ,\tau,y,z)\phi(t,\bar x,\tau,y,z)\dzdy.
	 	\end{align*}
	 	The last equality we obtained by using the Riesz representation theorem.
	\end{proof}

    \begin{lemma}\label{lemma:h-1bound}
    Assume $f\in C_c^{\infty}((0,T)\times\Sigma;\mathcal{C}_{\sharp}^\infty(\overline{\T\times Y\times Z}))$ and satisfies 
    \begin{align}\label{eq:integral_f}
        \int_Z f\di{z}=0.
    \end{align}
    Let $f^\es(t,x):=f\left(t,\overline{x},\dfrac{t}{\es}, \dfrac{x}{\es},\dfrac{x_2}{\es^2}\right)$ then, there exists an $\es$-independent constant $C>0$,  such that
    \begin{align*}
       \| \frac{1}{\es^2} f^\es\|_{L^2(0,T;H^{1}(\omm)^*)}\leq C.
    \end{align*}
\end{lemma}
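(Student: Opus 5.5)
The plan is to write $\frac{1}{\es^2}f^\es$ as an exact $x_2$-derivative of a function of size $O(1)$, plus terms that are bounded in $L^2(\omm)$. Then I integrate by parts in $x_2$ against a test function $\varphi\in H^1(\omm)$ and use the fact that $\omm$ has measure $O(\es)$. Throughout, $g^\es$ and similar objects denote the evaluation $g\left(t,\overline{x},\frac{t}{\es},\frac{x}{\es},\frac{x_2}{\es^2}\right)$, exactly as for $f^\es$.

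\textbf{Step 1 (first corrector).} By \eqref{eq:integral_f}, $f$ has zero mean in $z$. Hence
\begin{align*}
g(t,\overline{x},\tau,y,z):=\int_0^z f\di{s}-\int_Z\int_0^{z'} f\di{s}\di{z'}
\end{align*}
is smooth, $z$-periodic, has zero $z$-mean, and satisfies $\partial_z g=f$. It inherits compact support in $(t,\overline{x})$ and periodicity in $\tau,y_2$. The chain rule gives
\begin{align*}
\partial_{x_2}(g^\es)=\tfrac{1}{\es^2}f^\es+\tfrac{1}{\es}(\partial_{y_2}g)^\es+(\partial_{x_2}g)^\es .
\end{align*}
The middle term is still too large: its $L^2(\omm)$ norm is of order $\es^{-1/2}$. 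So a second corrector is needed.

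\textbf{Step 2 (second corrector).} Because $g$ has zero $z$-mean, $\int_Z\partial_{y_2}g\di{z}=\partial_{y_2}\int_Z g\di{z}=0$. So I can again take a smooth, $z$-periodic $k$ with zero $z$-mean and $\partial_z k=\partial_{y_2}g$. Then
\begin{align*}
\partial_{x_2}(\es k^\es)=\tfrac{1}{\es}(\partial_{y_2}g)^\es+(\partial_{y_2}k)^\es+\es(\partial_{x_2}k)^\es .
\end{align*}
Subtracting this from the identity in Step 1 yields the representation
\begin{align*}
\tfrac{1}{\es^2}f^\es=\partial_{x_2}\big(g^\es-\es k^\es\big)+(\partial_{y_2}k)^\es-(\partial_{x_2}g)^\es+\es(\partial_{x_2}k)^\es .
\end{align*}
Every function appearing on the right is bounded uniformly in all variables, with bounds independent of $\es$ and $t$.

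\textbf{Step 3 (duality estimate).} For a.e.\ $t$ and every $\varphi\in H^1(\omm)$, I pair the representation from Step 2 with $\varphi$. In the first term I integrate by parts in $x_2$. The boundary contributions on $x_2=0$ and $x_2=h$ vanish, for two independent reasons: $\beta$-type compact support in $\overline{x}$, and the $x_2$-periodic structure with $h=N\es$. The contributions on the faces $x_1=\pm\es/2$ do not arise, since only $\partial_{x_2}$ is involved.

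All integrands are then bounded in $L^\infty$ by a constant $C$. Cauchy--Schwarz together with $|\omm|=\es h$ gives
\begin{align*}
\Big|\big\langle \tfrac{1}{\es^2}f^\es(t),\varphi\big\rangle\Big|\le C\|1\|_{L^2(\omm)}\|\varphi\|_{H^1(\omm)}\le C\sqrt{\es}\,\|\varphi\|_{H^1(\omm)}.
\end{align*}
Squaring and integrating over $(0,T)$ then gives the claimed bound, in fact with an extra factor $\sqrt{\es}$.

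\textbf{Main obstacle.} The only delicate point is Step 2. A single antiderivative in $z$ leaves the $O(\es^{-1})$ term $\frac{1}{\es}(\partial_{y_2}g)^\es$, which is not uniformly bounded in the dual norm. The fix relies on two observations: normalizing $g$ to have zero $z$-mean makes $\partial_{y_2}g$ again $z$-mean-free, and the scaling $\es k^\es$ produces exactly the needed $\frac{1}{\es}$ term. I would also check that the boundary terms in $x_2$ genuinely vanish, using $h=N\es$ and the compact support in $\overline{x}$.
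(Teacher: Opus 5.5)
Your proposal is correct and is essentially the paper's own proof: the paper also takes a zero-mean $z$-antiderivative $F$ of $f$ (your $g$), notes that $\partial_{y_2}F$ again has zero $z$-mean and takes a second antiderivative $\widetilde{F}$ (your $k$), writes $\frac{1}{\es^2}f^\es$ as $\partial_{x_2}\bigl(F^\es-\es\widetilde{F}^\es\bigr)$ plus uniformly bounded terms, and integrates by parts in $x_2$. Your additional use of $|\omm|=\es h$ gives the sharper bound $C\sqrt{\es}$, which the paper does not record; note only that the boundary terms at $x_2\in\{0,h\}$ vanish because of the compact support in $\overline{x}$ alone, whereas your ``periodicity with $h=N\es$'' reason would not suffice by itself ($h/\es^2$ need not be an integer, and $\varphi$ need not be periodic).
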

\begin{proof}
    The proof follows similar lines to \cite[Theorem 3.3]{allaire1996multiscale}. We have \eqref{eq:integral_f}, therefore, there exists $F\in C_c^{\infty}((0,T)\times\Sigma;C_{\sharp}(\overline{\T\times Y\times Z}))$ such that 
    \begin{align}\label{eq:derivative_of_F}
        \frac{d}{dz}F=f\quad\mbox{and}\quad   \int_ZF\di{z}=0.
    \end{align}
    Let $F^\es(t,x):=F\left(t,\overline{x},\dfrac{t}{\es}, \dfrac{x}{\es},\dfrac{x_2}{\es^2}\right)$, then
    \begin{align*}
        \frac{\p}{\p x_2}F^\es=\nabla_{\bar x} F^\es+\frac{1}{\es}\frac{\p}{\p y_2}F^\es+\frac{1}{\es^2}\frac{d}{dz}F^\es.
    \end{align*}
  From \eqref{eq:derivative_of_F}, we have
  \begin{align*}
      \int_Z\frac{\p}{\p y_2}F\di{z}=0
  \end{align*}
    therefore, there exist $\widetilde{F}\in C_c^{\infty}((0,T)\times\Sigma;C_{\sharp}(\overline{\T\times Y\times Z}))$ such that 
    \begin{align}\label{eq:derivative_of_F_widetilde}
        \frac{d}{dz}\widetilde{F}=\frac{\p}{\p y_2}F \quad \mbox{and }\quad    \int_Z\widetilde{F}\di{z}=0.
    \end{align}
     Let $\widetilde{F}^\es(t,x):=\widetilde{F}\left(t,\overline{x},\dfrac{t}{\es}, \dfrac{x}{\es},\dfrac{x_2}{\es^2}\right).$ By the chain rule and using \eqref{eq:derivative_of_F_widetilde}, we get
    \begin{align*}
        \frac{\p}{\p x_2}\widetilde{F}^\es=\nabla_{\bar x} \widetilde{F}^\es+\frac{1}{\es}\frac{\p}{\p y_2}\widetilde{F}^\es+\frac{1}{\es^2}\frac{d}{dz}\widetilde{F}^\es,
    \end{align*}
    and
    \begin{align*}
        \frac{1}{\es} \frac{\p}{\p y_2}F=\es \frac{\p}{\p x_2}\widetilde{F}^\es-\es \nabla_{\bar x} \widetilde{F}^\es -\frac{\p}{\p y_2}\widetilde{F}^\es.
    \end{align*}
Now, from \eqref{eq:derivative_of_F}, we have
\begin{align*}
\frac{1}{\es^2}f^\es&= \frac{\p}{\p x_2}F^\es-\nabla_{\bar x} F^\es - \frac{1}{\es}\frac{\p}{\p y_2}F^\es\\
&=\frac{\p}{\p x_2}F^\es-\nabla_{\bar x} F^\es -\Big(\es \frac{\p}{\p x_2}\widetilde{F}^\es-\es \nabla_{\bar x} \widetilde{F}^\es -\frac{\p}{\p y_2}\widetilde{F}^\es\Big)
\end{align*}
    Now, let $\psi\in L^2((0,T);H^1(\Omega_M^\es))$, with $\|\psi\|_{L^2((0,T);H^1(\Omega_M^\es))}\leq 1$,  then using integration by parts with respect to the $x_2$ variable, we get
    \begin{align*}
        \Big|\int_0^T\Big\langle \frac{1}{\es^2}f,\psi\Big\rangle \di{t}\Big|&=-\int_0^T\int_{\omm} F^\es \frac{\p}{\p x_2}\psi\di{x}\di{t}- \int_0^T\int_{\omm} \nabla_{\bar x} F^\es\psi\di{x}\di{t}\\
        &\quad+\es\int_0^T\int_{\omm} \widetilde{F}^\es \frac{\p}{\p x_2}\psi\di{x}\di{t}+\es \int_0^T\int_{\omm} \nabla_{\bar x} \widetilde{F}^\es \psi\di{x}\di{t}+ \int_0^T\int_{\omm} \frac{\p}{\p y_2}\widetilde{F}^\es \psi\di{x}\di{t}\\
        &\leq C.
    \end{align*}
    
\end{proof}

 \begin{proof}[Proof of \Cref{lemma:first_transmission_condition}]
        Let $\Phi(t,\bar x,y)\in (C_c^\infty ((0,T)\times\Sigma\times (Y\cup Y_L)))^2$ and extend $Y$ periodically in $y_2$ direction. From Theorem \ref{theorem:convergence_in_the_layer}, we have
        \begin{align*}
           \lim_{\es\rightarrow 0} \int_0^T\int_{\omm}\nabla u_M^\es\cdot \Phi(t,\bar x,\tfrac{x}{\es})\di{x}\di{t}=0
        \end{align*}
        Integration by parts with respect to the space variable, we get
        \begin{align*}
            0&=-\lim_{\es\rightarrow 0} \int_0^T\int_{\omm}u_M^\es \Big(\nabla\cdot \Phi(t,\bar x,\tfrac{x}{\es})+\frac{1}{\es}\nabla_y \cdot\Phi(t,\bar x,\tfrac{x}{\es})\Big)\di{x}\di{t}\\
            &\hspace{2cm}+\int_0^T\int_{\partial\omm} u_M^\es\Phi(t,\bar x,\tfrac{x}{\es})\cdot\nu\di{\sigma_x}\di{t}\\
            &=-\lim_{\es\rightarrow 0} \int_0^T\int_{\omm}u_M^\es \Big(\nabla\cdot \Phi(t,\bar x,\tfrac{x}{\es})+\frac{1}{\es}\nabla_y \cdot\Phi(t,\bar x,\tfrac{x}{\es})\Big)\di{x}\di{t}\\
            &\hspace{2cm}+\int_0^T\int_{L^\es} u_L^\es\Phi(t,\bar x,\tfrac{-1}{2},\tfrac{x_2}{\es})\cdot\nu\di{\sigma_x}\di{t}\\
            &=\mint u_M^0(t,x)\nabla_y\cdot \Phi(t,\bar x,y)\di{z}\di{y}\di{\tau}\di{\bar x}\di{t}\\
            &\hspace{2cm}-\int_0^T\int_{\Sigma}\int_{Y_L}u_L^0(t,\bar x)\Phi_1(t,\bar x, y_2)\di{y_2}\di{\bar x}\di{t}
        \end{align*}
        again doing integration by parts and by density argument, we get $u_L^0=u_M^0$ on $(0,T)\times\Sigma.$ A similar argument yields $u_R^0=u_M^0$ on $(0,T)\times\Sigma.$
    \end{proof}
    \section*{Acknowledgement}
This work is funded by the Deutsche Forschungsgemeinschaft (DFG, German Research Foundation) – Project-ID 258734477 – SFB 1173 and under Germany's Excellence Strategy – EXC-2047/2 – 390685813.

\bibliographystyle{abbrv}
	\bibliography{mybib}

\end{document}